\documentclass[12pt,a4paper,english]{smfart}

\usepackage{fullpage}

\usepackage[french]{babel}
\usepackage[utf8x]{inputenc}
\usepackage{smfthm,enumerate}
\usepackage{amssymb,url,xspace,euscript}
\usepackage{enumerate}
\usepackage[arrow,matrix]{xy}
\xyoption{all}

\xyoption{frame}

\usepackage{color}
\usepackage[colorlinks,linkcolor=blue,urlcolor=blue,linktocpage,dvips]{hyperref}
\pagestyle{plain}

\usepackage{amsmath,amscd,amssymb}

\usepackage{calrsfs}



\newcommand{\finpreuve}{\mbox{} \hfill \mbox{$\Box$}}




\def\K{{\rm K}}
\def\L{{\rm L}}
\def\F{{\rm F}}
\def\k{{\rm k}}

\def\Q{{\mathbb Q}}
\def\Z{{\mathbb Z}}
\def\fq{{\mathbb F}}
\def\Com{{\mathbb C}}
\def\Reel{{\mathbb R}}

\def\N{{\rm N}}

\def\d{{\rm disc}}

\def\CC{{\alpha}}

\def\H{{\mathcal H}}
\def\G{{\mathcal G}}

\def\X{{\mathcal X}}

\def\O{{\mathcal O}}

\def\U{{\mathcal U}}
\def\V{{\mathcal V}}

\def\T{{\mathcal T}}

\def\LL{{\mathcal L}}

\def\Mo{{\mathbb M}}

\def\p{{\mathfrak p}}
\def\P{{\mathfrak P}}

\def\q{{\mathfrak q}}
\def\mm{{\mathfrak m}}
\def\mmm{{\overline{\mathfrak m}}}

\def\I{{\rm I}}

\def\Rd{{\rm Rd}}
\def\deg{{\rm deg}}

\def\Im{{\rm Im}}
\def\Gal{{\rm Gal}}

\def\grad{{\rm Grad}}
\def\Cl{{\rm Cl}}

\def\GST{{\G_S^T}}

\def\Rg{{\rm Reg}}
\def\A{{\rm A}}
\def\X{\mathrm{X}}

\def\Minf{{\underline{\Mo}}}
\def\Msup{{\overline{\Mo}}}

\def\1{{\bf 1}}

\def\FF#1#2{{\displaystyle{ \left( \frac{#1}{#2} \right) }} }

\def\a#1{\frac{\log #1}{\sqrt{#1}-1}}

\parindent=0pt

\newenvironment{Question}{\begin{enonce}{Question}}{\end{enonce}}

\begin{document}

\title{On the invariant factors of class groups in towers of number fields}

\date{\today}

\author{Farshid Hajir}
\address{Department of Mathematics \& Statistics, University of Massachusetts, Amherst MA 01003, USA.}
\author{Christian Maire}
\address{Laboratoire de Math\'ematiques, UMR 6623 Universit\'e de Franche-Comté et CNRS, 16 route de Gray, 25030 Besan\c con, France}
\subjclass{11R29, 11R37}
\keywords{Class field towers, ideal class groups, pro-$p$ groups, $p$-adic analytic groups, Brauer-Siegel Theorem}
\thanks{\emph{Acknowledgements.} The second author would like to thank UMass Amherst for its hospitality during several visits, as well as the  R\'egion Franche-Comt\'e for making his travel possible.  He also thanks NTU Singapore for providing a stimulating research atmosphere on a visit there.}

\begin{abstract}
 For a finite abelian $p$-group $A$ of rank $d=\dim A/pA$, let $\Mo_A := \log_p |A|^{1/d}$ be its
 \emph{(logarithmic) mean exponent}.  We study the behavior of the mean exponent of $p$-class groups in pro-$p$ towers $\L/K$ of number fields.  Via a combination of results from analytic and algebraic number theory, we construct infinite tamely
 ramified pro-$p$ towers in which the mean exponent of $p$-class groups remains bounded.  Several explicit
 examples are given with $p=2$.   Turning to group theory, we introduce an invariant $\Minf(G)$ attached to a finitely generated pro-$p$ group $G$; when $G=\Gal(\L/\K)$, where $\L$ is the Hilbert $p$-class field tower of a number field $K$, $\Minf(G)$ measures the asymptotic behavior of the mean exponent of $p$-class groups inside $\L/\K$.  We compare and contrast the behavior of this invariant in analytic versus non-analytic groups.  We exploit the interplay of group-theoretical and number-theoretical perspectives on this invariant and explore some open questions that arise as a result, which may be of independent interest in group theory.
 
 \end{abstract}

\maketitle

\tableofcontents

A few hundred years after its definition, the ideal class group
 continues to be one of the most mysterious
objects in number theory. One early lesson, going back to Gauss, was
that it is advantageous to study the $p$-Sylow subgroup
of the class group one prime $p$ at a time.  The variation of $p$-class
groups in pro-$p$ towers of number fields is perhaps the area that has had the
most success, thanks to the pioneering work of Iwasawa. Indeed, his
insights uncovered a very rich algebraic structure in the behavior of
$p$-class groups in layers of a $\Z_p$-extension.  In particular, the
growth of the generator rank of these $p$-class groups is governed by
the  invariants $\mu, \lambda, \nu$ which derive from the 
structure of the associated Iwasawa module.  These ideas have been
extended to a much broader context of extensions with more general 
$p$-adic analytic groups, including non-abelian ones (see, for example,
Harris
\cite{Harris1}, Venjakob \cite{Venjakob},
Coates-Schneider-Sujatha \cite{CSJ}, Perbet \cite{Perbet}, to cite only a few 
authors). 

\

In this article, we consider the variation of the invariant factors of
$p$-class groups, focusing in particular on a notion we call the
``mean exponent'' in towers of $p$-extensions of number fields.  A
recurring theme is comparing and contrasting the tame case versus the
analytic case; indeed, the Fontaine-Mazur conjecture \cite[Conjecture 5a]{FM}
has influenced
and motivated the questions we explore here.

\

First, let's define the average or mean
exponent.  Suppose a non-trivial finite $p$-group $A$ has elementary
divisors $p^{a_1}, \ldots, p^{a_d}$ listed in decreasing order, in
other words
$$ A = \Z/p^{a_1} \times \ldots \times \Z/p^{a_{d}},
\qquad a_1 \geq a_2 \geq \ldots \geq a_{d}\geq 1,
$$
where $d$ is the $p$-rank of $A$.
We then define the \emph{(logarithmic) mean exponent} of $A$ to be 
$$\Mo_A:=\frac{a_1 + a_2 + \cdots + a_{d}}{d}=  \log_p|A|^{1/d}=\frac{\log_p |A|}{d},$$
where $\log_p(a) = \log(a)/\log(p)$ is the base-$p$ logarithm.  Thus,
the mean exponent is a \emph{normalized}  measure of the size
of the group as compared to its rank.
Note that for a non-trivial $p$-group $A$, we always have 
$1\leq \Mo_A \leq \log_p |A|$, the minimum value occurring in the
case where $A$ is an elementary abelian $p$-group and the maximum value
occurring in the case of cyclic $A$.  Note also that
$\exp(A)=p^{a_1}$ is the exponent of $A$.
The mean exponent of the trivial group
is defined to be $0$.

\

For a number field $\K$, we denote by $\A(\K)$ its  $p$-class group, and we put
$$\Mo(\K,p):=\Mo(\K)= \Mo_{\A(\K)}$$
to be the ``mean exponent'' of the $p$-class group of $K$.

\

Second, let us introduce towers with restricted ramification.
Let $\K$ be a number field, $p$ a rational prime number, and $S$, $T$ a
disjoint pair of finite sets of places of $\K$.  Inside a fixed
algebraic closure of $\K$, consider the compositum $\K_S^T$ of all
finite Galois extensions of $\K$ of $p$-power degree unramified outside
$S$ and in which all the places of $T$ split completely.  We call
$\K_S^T$ the maximal unramified-outside-$S$ and $T$-split $p$-extension
of $\K$, and put $\G_S^T=\G_S^T(\K,p) =\Gal(\K_S^T/\K)$ for its Galois group
over $\K$.  If there are no places dividing $p$ in $S$, which we
abbreviate as $(S,p)=1$ and call the tame case, the structure of the
groups $\G_S^T$ is rather mysterious. In particular, it's already
difficult to determine in any given case whether $\G_S^T$ is finite or
not.  On the other hand, if $S$ contains all the primes of $\K$
dividing $p$ (the wild case), then the knowledge of $\Z_p$-extensions
of $\K$, which give infinite abelian quotients of $\G_S^{\emptyset}$, goes
quite far in revealing the structure of the latter group.  By
contrast, in the tame case, $\G_S^T$ is \emph{FAb}, meaning its
subgroups of finite index have finite abelianization, so in particular there are no surjections
to $\Z_p$.  This is a
manifestation of a broader philosophy of Fontaine and Mazur \cite{FM} that maintains that
``geometric'' $p$-adic Galois representations with infinite image are
always wildly ramified.  The dichotomy of the wild and tame cases is
punctuated by the expectation that when $(S,p)=1$, $\G_S^T$ has no
infinite $p$-adic analytic quotients.

\

To illustrate the key ideas, let us fix $p$, and consider a number field $\K$ with infinite Hilbert
$p$-class field, i.e.~ $\G_\emptyset^\emptyset(\K)$ is infinite.  Let us fix an infinite Galois
extension $\L/\K$ with $\K \subset \L \subseteq K_\emptyset^\emptyset$.  
We are primarily interested in estimating  $\exp(\A({\K_n}))$, for $(\K_n)$ a nested sequence
inside $\L$, but finding this
difficult, we also study $(\Mo(\K_n))$, i.e. 
the variation of the mean exponent of $p$-class groups in the tower $\L/\K$. In particular, for each natural number $n$,
 we define $$\Mo_n(\L/\K)= \min_{[\K':\K]=p^n} \ \Mo(\K'),$$
 where the minimum is taken over all extensions $\K'/\K$ of degree $p^n$ with $ \K' \subset\L$.  We then put $$
\Minf(\L/\K)= \liminf_n \ \Mo_n(\L/\K),
$$
which we call the \emph{asymptotic mean exponent} of the tower.  This quantity is well-defined but could \emph{a priori} be $\infty$.  

\

Let's note right away that these asymptotic invariants can be defined
purely in a group-theoretical context, as follows.  Say $\G$ is an
infinite finitely generated \emph{FAb} pro-$p$ group.  For each $n$,
we put
$$\Mo_n(\G) = \min_{[\G:\U]=p^n} \Mo_{\U^{\mathrm{ab}}}$$ where the
minimum is taken over the open subgroups of index $p^n$. We then put
$$
\Minf(\G)=\liminf_n \Mo_n(\G)
$$ for the asymptotic mean exponent of $\G$.  It's clear that if $\G =
\Gal(\L/\K)$, with $\L=\K_\emptyset^\emptyset$, then
$\Minf(\G)=\Minf(\L/\K)$.  Let's also note that we immediately have
the estimate $1 \leq \Minf(\G)$ but a general upper bound would seem
to be elusive.

\

Some of our results in this paper give bounds for
$\Minf(\L/\K)$ for certain kinds of tame extensions $\L/\K$.  In
particular, we draw upon a relationship between the number of primes
that split in $\L/\K$ and the asymptotic mean exponent of the tower.
Thus for finitely generated infinite \emph{FAb} $\G$ which are
realizable as the Galois group of the Hilbert $p$-class tower of
number fields, we can bound $\Minf(\G)$ from above.  These estimates
could be of
interest in relation to the following question: is every finitely
generated \emph{FAb} pro-$p$ group realizable as
$\Gal(\K_\emptyset^\emptyset /\K)$ for some number field $\K$?  In
particular, it raises the purely group-theoretical question of whether
there exists an infinite finitely generated \emph{FAb} pro-$p$ group
$\G$ for which $\Minf(\G)=\infty$?  Note that Ozaki \cite{Ozaki} has shown that
for any  \emph{finite} $p$-group $\G$, there exists a number field
$K$ such that $\G$ is isomorphic to $\Gal(\K_\emptyset^\emptyset/\K))$.

\

The following theorem summarizes some of the key results  in this paper.

\begin{theo}\label{intro}
\begin{enumerate}
\item
Suppose $S$ is a finite set of primes of a number field 
$\K$ with $(S,p)=1$ such that $\G=\Gal(\K_S^\emptyset/\K)$ is infinite.
  Then there exists a constant $C>0$ such that for all open subgroups $\U \subset \G$, $\Mo_{\U^{\mathrm{ab}}} \leq C [\G:\U]$.

\item With $K,S,\G$ as above, suppose $\G$ is mild (for example this is the case if $K,S$ satisfy the condition of Labute \cite[Theorem 1.6]{labute}, and see also Schmidt \cite{schmidt}).  Then for all $\varepsilon >0$, there exist a constant $C'>0$ and a nested sequence of open subgroups $\U_i$ forming an open neighborhood of $\G$ such that 
 $\Mo_{\U_i^{\mathrm{ab}}} \leq C' [\G:\U_i]/(\log [\G:\U_i])^{2-\varepsilon}$.
 
 \item There exist infinitely many pairwise disjoint number fields $\K$  with infinite $p$-class field tower $\K_\emptyset^\emptyset/\K$ but finite asymptotic mean exponent, i.e. $\Minf(\Gal(\K_\emptyset^\emptyset/\K)) \neq \infty$.
\end{enumerate}
\end{theo}

\

The first two parts of the theorem come relatively easily from standard techniques; they are proved in Proposition \ref{theo-1} and Theorem \ref{theo-2}, respectively.  To illustrate the third part, which is proved in \S \ref{pc}, consider the following concrete arithmetic example.  Namely, fix $p=2$ and let $\K$ be the following compositum of quadratic fields:
$$
\K=\Q(\sqrt{130356633908760178920},\sqrt{-80285321329764931}).
$$
  Let $\L=\K_\emptyset^\emptyset$.  Then $\L/\K$ is infinite and 
$$
\Minf(\L/\K) \leq 8.858.
$$
The details of the construction are given below in \S3, but here, let us unpack what this example means concretely.  Namely, the assertion is that there exists a tower $\K =\K_1 \subset \K_2 \subset \ldots$ inside $\L$ such that for all $n$, the $2$-class group of $\K_n$ has mean exponent at most $8.858$, so in particular, there is always at least one elementary divisor of size at most $2^8$ all the way up the tower.  We should note that the construction of the tower guarantees that the rank of the $2$-class groups tends to infinity, so the fact that the mean exponent remains below 9 entails that the number of elementary divisors of size at most $2^8$ becomes arbitrarily large as we climb the tower.  

\

We would like to contrast the third part of the theorem with the generic behavior of the mean exponent of open neighborhoods in analytic pro-$p$ groups.  Namely, if $G$ is a uniform pro-$p$ group of dimension $d$ and $\U$ runs over the $p$-central series of $G$, we have $$\Mo_{\U^{\mathrm{ab}}} \geq \frac{1}{d} \log[G:\U],$$
hence it tends to infinity; see Corollary \ref{analytic}.

\

The principle behind the above example and others we construct is as follows.  We use genus theory to create towers in which the $p$-rank grows linearly with the degree; this is achieved by first having a tower in which many primes split and then composing that tower with a degree $p$ Galois extension the same primes ramify.  The linear growth of the rank of the $p$-class group when combined with upper bounds on the class number coming from the generalized Brauer-Siegel theorem of Tsfasman-Vladut gives us the desired upper bound on $\Minf$.  

\

In the more classical case of Iwasawa theory, i.e. in wild towers, there is an algebraic theory of the invariants $\mu,\lambda,\nu$ associated with the Iwasawa module, and having linear growth in the rank is tantamount to having $\mu>0$.  It is curious that in that context also, the phenomenon of linear rank growth appears to be related to having a large set of primes splitting in the tower (see Iwasawa \cite{Iw}).  In a forthcoming work, we will study this relationship further.

\

The paper is organized as follows.  In \S 1, we recall some background, including the work of Tsfasman-Valduts extending the Brauer-Siegel Theorem and some basic results from genus theory.   In \S 2, we begin by giving a sketch of our main construction for unramified towers, then enlarge the scope of our study by introducing class groups that classify extensions with prescribed splitting and (tame) ramification. In \S 3, we work out a number of examples in detail, demonstrating how the 
exact asymptotic formula of Tsfasman-Vladut can be exploited to improve the bounds on the mean
exponent. In \S4, we reflect on the relationship between linear growth for $p$-ranks of class groups and the existence of many primes in the tower that split (almost) completely, together with the implication of these considerations for bounding the asymptotic mean exponent in infinite tame extensions.  We turn in \S 5 from number theory to considerations of the asymptotic mean exponent for pro-$p$ groups in general.  Finally, in \S 6, we take stock of some questions for further study in group theory, as well as in number theory, that are raised by the considerations of this paper.







\section*{Some Notation}

We fix a prime number  $p$.  Let
$\K$ be a number field of signature $(r_1,r_2)$. Denote by  $\d(\K)$ the  discriminant of $\K$, by  $\Rd_\K:=|\d(\K)|^{1/[\K:\Q]}$ its root discriminant 
and by $g=g_\K=\log \sqrt{|\d(\K)|}$ its genus.
Let  $\Rg_\K$ be the regulator of $\K$. The quantity $\delta_\K$ is $1$ is $\K$ contains the $p$-roots of the unity, $0$ otherwise.

\medskip

Now let  $S$ and $T$ be two disjoints finite sets of places of $\K$. Let  $\K_S^T$ be the maximal unramified outside $S$ and $T$-split
$p$-extension 
of $\K$ (with the convention that for  $p=2$ all real places stay real); put $\GST = \Gal(\K_S^T/\K)$. It is well-known that 
the pro-$p$-group $\GST$ is finitely presented (see for example \cite{koch} or \cite{gras}): the quantities 
$$d(\GST)= \dim_{\fq_p} H^1(\GST,\fq_p)= d_p H^1(\GST,\fq_p)$$
and $$r(\GST)= \dim_{\fq_p} H^2(\GST,\fq_p)= d_p H^2(\GST,\fq_p)$$
are finite. Put   $\A_S^T:= \GST^{ab}$ 
 the maximal abelian quotient of
$\GST$, which corresponds by Class Field Theory to the maximal abelian $S$-ramified ({\it i.e.} unramified outside $S$) and $T$-split extension of $\K$.
For $S=T=\emptyset$, $\GST$ corresponds to   the Galois group of   the Hilbert $p$-Class Field Tower of $\K$ and $\A=\Cl$ to its  $p$-Class group. If 
 $S$ is prime to $p$,  the pro-$p$-group  $\GST$ is \emph{FAb}: every open subgroups of $\GST$ has finite abelianization.

\section{Background}
\subsection{Brauer-Siegel and  Tsfasman-Vladut Theorems}\label{sectionTV}

We recall first some results due to Tsfasman and Vladut  \cite{TV} generalizing the Theorem of Brauer-Siegel. 
Throughout this work, we will use the Tsfasman-Valdut context of asymptotically
exact extensions.

\medskip A sequence  $(\K_n)$ of number fields is called a tower if for all $n$, $\K_n \subsetneq \K_{n+1}$ so in particular $[\K_n:\K] \rightarrow \infty$ with $n$. 
\medskip

Let $\L/\K$ be an infinite extension of a  number field $\K$ and  let  $(\K_n)$ be a tower  
in $\L/\K$ with limit $\L$, i.e.~each  $\K_n$ is a finite extension of $\K$ contained in $\L$  and  $\displaystyle{\bigcup_n \K_n =\L}$.
Put  $$g_n=g_{\K_n}= \log(\sqrt{|\d({\K_n})|}).$$

For every prime number $\ell$ and power $q:=\ell^m$ of $\ell$, let us consider   the quantity   $$\phi_q=\lim_n \frac{N_n(q)}{g_n},$$
 where $N_n(q)=\# \{\text{prime ideal } \q \subset \O_{\K_n}, \  \#\O_{\K_n}/\q=q\}$.  We also put
 $$\phi_\Reel=\lim_n \frac{r_1(\K_n)}{g_n}, \ \ \phi_\Com=\lim_n \frac{r_2(\K_n)}{g_n}.$$

As  the sequence  $(\K_n)_n$ is a tower, all the limits exist  and  depend only  on $\L/\K$.  In the terminology of \cite{TV}, the sequence $(\K_n)_n$  is said  to be {\it asymptotically exact}. It is called
\emph{asymptotically good} if $\phi_q > 0$ for some $q$ where $q$ is either a prime power or belongs to $\{\Reel, \Com\}$.  In this paper, we will mostly be interested in examples where $\phi_\Com>0$.
Deeply ramified wild extensions (such as 
$\Z_p$-extensions) are asymptotically bad.  By contrast, 
assuming  $\G_S^\emptyset(\K,p)$ is infinite for some finite $S$ with $(S,p)=1$, any tower inside
$\K_S^\emptyset/\K$ is asymptotically good.  More generally, even if $(S,p)\neq 1$ but $(\K_n)$ is
a tower in which the $N$th
higher ramification groups all vanish for some fixed $N$, then the tower is asymptotically good (see
\cite{HM-IMRN}).

\medskip

In \cite{TV},  Tsfasman and Vladut have studied the behavior of the quantity $\log(\Rg_n \cdot h_n)/g_n$ along a tower $(K_n)$ with limit $\L/\K$, 
where $\Rg_n$
is the regulator of $\K_n$
and  $h_n$ its class number. They conjecture that the quantity $$B(\L/\K)=\lim_n \frac{ \log(\Rg_n h_n)}{g_n}$$ is well-defined and prove the following theorem.

\begin{theo}[Tsfasman-Vladut, \cite{TV}] \label{TV}
 \begin{enumerate}
 \item
Assuming GRH, the limit $B(\L/\K)$ exists and depends only $\L/\K$, not on the choice of  tower $(\K_n)_n$ with limit $L$.
Moreover one has the equality: $$B(\L/\K)= 1 + \sum_q \phi_q \log \frac{q}{q-1} - \phi_\Reel \log 2 - \phi_\Com \log 2\pi.$$
Without assuming GRH, one has the same conclusion if  the tower of  number fields $(\K_n)$ is  Galois relative to $\K$.
\item Assuming GRH,  $B(\L/\K)\leq 1.0939$ for all $\L/\K$.  If $\K$ is totally imaginary, then $B(\L/\K) \leq 1.0765$.  Without assuming GRH, one has
$B(\L/\K) \leq 1.1589$. 
\end{enumerate}
\end{theo}


\subsection{On the  $p$-$S$-$T$ towers}


Comprehensive references for the study of extensions with restricted ramification include Koch \cite{koch}, Gras \cite{gras} and Neukirch-Schmidt-Wingberg \cite{NSW}.We give only a quick sketch of some well-known facts, and refer the reader to these books which contain much more background and detail.

\

Let  $\K$ be a number field and let  $S$ and  $T$ be two finite sets of  places of $\K$ with $S\cap T= \emptyset$. We assume that $(S,p)=1$. 
We recall that the pro-$p$-group $\GST$ is \emph{FAb} and that 
the $p$-rank  $d_p \GST$ of $\GST$ can be computed thanks to Class Field Theory. In particular, one has (see e.g.~\cite{gras}, Chapter I \S 4, Theorem 4.6):

\begin{prop}\label{proposition1.2} With notation as above, we have
 $$d_p \GST = d_p \A_S^T  \geq |S| -\big(r_1(\K)+r_2(\K) + |T|-\delta_\K\big).$$
\end{prop}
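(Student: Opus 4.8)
The plan is first to dispose of the stated equality and then to bound $d_p\A_S^T$ from below by a genus-theory computation via Class Field Theory. For the equality $d_p\GST=d_p\A_S^T$, I would simply observe that since $\fq_p$ carries the trivial action, $H^1(\GST,\fq_p)=\Hom(\GST,\fq_p)$, and every continuous homomorphism to $\fq_p$ factors through the maximal (elementary) abelian quotient, so $\Hom(\GST,\fq_p)=\Hom(\A_S^T,\fq_p)$. As $\GST$ is \emph{FAb} when $(S,p)=1$, the group $\A_S^T$ is a finite abelian $p$-group, whence $\dim_{\fq_p}\Hom(\A_S^T,\fq_p)=\dim_{\fq_p}\A_S^T/p=d_p\A_S^T$. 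This reduces the problem to exhibiting many independent generators of $\A_S^T$.

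For the inequality I would invoke the fundamental exact sequence of Class Field Theory attached to the maximal abelian $S$-ramified, $T$-split $p$-extension. Writing $E_T=\O_{\K,T}^*$ for the group of $T$-units of $\K$ and using that, for $v\in S$ (so $v\nmid p$), the pro-$p$ completion of $\O_{\K_v}^*$ is the finite group $\mu_{p^\infty}(\K_v)$ of $p$-power roots of unity, the reciprocity map yields an exact sequence
$$E_T\otimes\Z_p\xrightarrow{\ \lambda\ }\bigoplus_{v\in S}\O_{\K_v}^*\otimes\Z_p\longrightarrow\A_S^T\longrightarrow \Cl^T(\K)\otimes\Z_p\longrightarrow 0,$$
in which the right-hand term is the $T$-split $p$-class group and the image of $\bigoplus_{v\in S}\O_{\K_v}^*\otimes\Z_p$ is exactly the subgroup of $\A_S^T$ generated by the tame inertia at the primes of $S$. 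Since $\Cl^T(\K)\otimes\Z_p$ has non-negative $p$-rank, dropping it gives $d_p\A_S^T\ge d_p(\mathrm{coker}\,\lambda)$. I would then tensor with $\fq_p$ (right exact) and compute $\mathrm{coker}\,\lambda\otimes\fq_p$ as the cokernel of $\bar\lambda\colon E_T\otimes\fq_p\to\bigoplus_{v\in S}\O_{\K_v}^*\otimes\fq_p$: each prime $v\in S$ splitting in $\K(\mu_p)/\K$ contributes one dimension to the target (its inertia generator), for a total of $|S|$, while the relations form the image of $E_T\otimes\fq_p$, of dimension at most $\dim_{\fq_p}E_T/(E_T)^p=(r_1+r_2-1+|T|)+\delta_\K$ by Dirichlet's unit theorem. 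This already produces a lower bound of the correct shape.

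The hard part will be pinning down the exact constant, i.e.\ obtaining $-\delta_\K$ rather than the weaker term that the crude estimate $\dim\mathrm{coker}\,\bar\lambda\ge |S|-\dim_{\fq_p}(E_T\otimes\fq_p)$ produces; the two differ by $1-2\delta_\K$, so the naive sub-additive count is off by one precisely when $\mu_p\subset\K$. The remedy is to analyze the diagonal image under $\bar\lambda$ of the global $p$-power roots of unity and to use global reciprocity (the product formula for norm-residue symbols, equivalently the vanishing of the sum of local invariants in Poitou--Tate duality), which supplies the missing relation when $\delta_\K=1$; for $p=2$ one must additionally keep track of the convention that the real places stay real, which is the source of the customary $p=2$ subtleties (and shows why the primes of $S$ should be required to split in $\K(\mu_p)$ for each to contribute). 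Carrying out this bookkeeping is exactly a global Euler--Poincar\'e characteristic computation and is the content of the cited \cite{gras}; in practice I would invoke that result directly, or reprove the needed piece by inserting the Poitou--Tate exact sequence for the trivial module $\fq_p$ with ramification restricted to $S$ and splitting imposed along $T$.
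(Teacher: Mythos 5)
Your overall route is the right one, and in fact it is the only ``proof'' the paper has: the paper does not prove this proposition but quotes it from Gras \cite{gras}, and what lies behind that citation is exactly the computation you set up. The equality $d_p\GST=d_p\A_S^T$ via $H^1(\GST,\fq_p)=\Hom(\A_S^T,\fq_p)$ is correct, the exact sequence
$$E_T\otimes\Z_p\longrightarrow\bigoplus_{v\in S}\O_{\K_v}^*\otimes\Z_p\longrightarrow\A_S^T\longrightarrow\Cl^T(\K)\otimes\Z_p\longrightarrow 0$$
is the standard genus-theoretic one, and your count yields $d_p\A_S^T\geq |S|-\bigl(r_1+r_2+|T|-1+\delta_\K\bigr)$ when every $v\in S$ satisfies $\N v\equiv 1\bmod p$ (as you observe, a $v$ with $\N v\not\equiv 1\bmod p$ contributes nothing). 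One small repair: dropping the term $\Cl^T(\K)\otimes\Z_p$ is legitimate because the $p$-rank of a subgroup of a finite abelian $p$-group is at most that of the group (compare $p$-torsion subgroups), not because the quotient has non-negative rank.

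The genuine gap is the step you yourself flag as ``the hard part'': upgrading your constant $+1-\delta_\K$ to the printed $+\delta_\K$ by a reciprocity or Poitou--Tate argument when $\mu_p\subset\K$. No such argument can exist, because the printed inequality is false when $\delta_\K=1$. Take $p=2$, $\K=\Q$, $S=\{3\}$, $T=\emptyset$: then $\delta_\K=1$ and the printed bound reads $d_2\A_S^T\geq 1-(1+0+0-1)=1$, whereas with the paper's convention that real places stay real, $\A_S^T$ is the $2$-part of the ordinary ray class group of conductor $3$, namely $(\Z/3)^*/\langle -1\rangle=1$. (For odd $p$: take $\K=\Q(\zeta_3)$, $p=3$, $S$ a prime above $7$; the image of $\zeta_3$ in $\fq_7^\times/(\fq_7^\times)^3$ is nontrivial, so the localization map on units is injective and again $\A_S^T$ is trivial.) The mechanism you hoped for fails because the kernel of $E_T\otimes\fq_p\to\bigoplus_{v\in S}\O_{\K_v}^*\otimes\fq_p$ has no reason to contain $\zeta_p$, and the product formula is a relation over \emph{all} places of $\K$, so it imposes no constraint on the localizations at $S$ alone. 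What your method proves --- the bound with $1-\delta_\K$ --- is the correct statement, consistent with Theorem \ref{genre} of the paper, where $\delta_\k$ enters with the opposite (standard) sign; the proposition as printed evidently carries a sign slip on $\delta_\K$, being weaker than the truth when $\delta_\K=0$ and false when $\delta_\K=1$. The right conclusion for you to have drawn was that your ``naive'' count is the theorem and the stated inequality needs correcting, rather than promising a patch that cannot exist.
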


{\it A priori}, the pro-$p$-group  $\GST$ may be finite or not. A criterion for its infinitude can be obtained as a consequence 
of the Theorem of  Golod-Shafarevich; the following is their result, in the improved version due to Gasch\"utz and Vinberg.
\begin{theo}[Golod-Shafarevich] \label{GS}
 If a non-trivial pro-$p$-group $\G$ is finite then its generator and relation ranks satisfy the following inequality: $\displaystyle{r(\G) \geq \frac{d(\G)^2}{4}}$.
\end{theo}

The following classical theorem of Shafarevich on the Euler characteristic of  $\GST$ is of fundamental importance in this theory (see for example \cite{gras}): 
\begin{prop}\label{boundrelations} Assuming as above that $(S,p)=1$, we have
 $$0\leq r(\GST)-d(\GST) \leq r_1+r_2 -1 + \delta_{S,p} + |T|.$$
\end{prop}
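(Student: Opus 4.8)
The plan is to read off the two ranks cohomologically, namely $d(\GST) = \dim_{\fq_p} H^1(\GST, \fq_p)$ and $r(\GST) = \dim_{\fq_p} H^2(\GST, \fq_p)$, and to bound their difference using Poitou--Tate duality together with the global Euler--Poincar\'e characteristic. The lower bound is the easy half: since $(S,p)=1$, the group $\GST$ is \emph{FAb}, so in particular $\GST^{\mathrm{ab}} = \A_S^T$ is finite. Abelianizing a minimal presentation $1 \to R \to F \to \GST \to 1$, with $F$ free pro-$p$ on $d$ generators and $R$ minimally generated by $r$ relations as a closed normal subgroup, yields $\A_S^T = \Z_p^{\,d}/N$, where $N$ is the $\Z_p$-submodule spanned by the images of the relations. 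Finiteness of $\A_S^T$ forces $N \otimes \Q_p = \Q_p^{\,d}$, so the relation images span a space of dimension $d$; since $N$ is generated by $r$ elements this gives $r \geq d$, which is the inequality $r(\GST) - d(\GST) \geq 0$.

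For the upper bound I would work with the module $\mu_p \cong \fq_p$, keeping track of the Tate twist and of the Pontryagin dual $\Z/p$, and write down the nine-term Poitou--Tate exact sequence adapted to the ramification set $S$ and the splitting set $T$. This sequence interlaces the global groups $H^i(\GST, \mu_p)$ with the local groups $\bigoplus_{v \in S \cup T} H^i(\K_v, \mu_p)$, the $T$-split condition enforcing local triviality at the places of $T$, and with the Pontryagin duals of the cohomology of the dual module, the defect being measured by two Shafarevich groups. Extracting $\dim H^1$ and $\dim H^2$ from this sequence and subtracting, the bulk of the local terms telescopes and one is left with $r - d$ expressed as a fixed contribution from the infinite places and the global units, plus a nonnegative Shafarevich term.

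It remains to evaluate the surviving terms. Tameness $(S,p)=1$ makes the local cohomology at $v \in S$ elementary, since for residue characteristic different from $p$ the groups $H^i(\K_v, \mu_p)$ are explicit and contribute nothing beyond what is already accounted for in $d$. The $|T|$ appears from the local conditions imposed at the $T$-split places, the quantity $r_1 + r_2 - 1$ is exactly the free $\Z_p$-rank of the global unit group by Dirichlet's unit theorem, and $\delta_{S,p}$ records the possible presence of $\mu_p$ in $\K$, equivalently a contribution of $H^0$ of the dual module. Bounding the remaining Shafarevich group by this unit contribution yields $r - d \leq r_1 + r_2 - 1 + \delta_{S,p} + |T|$.

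The main obstacle is the precise bookkeeping required to land on the \emph{exact} constant rather than on an $O(1)$ bound: one must track the archimedean local conditions carefully, as these are genuinely delicate when $p=2$ where real places contribute, and correctly identify which of the two Shafarevich groups survives to give the sharp lower bound and which is bounded by the unit rank to give the upper bound. Because this is the classical theorem of Shafarevich, I would import the explicit local dimension formulas from Gras \cite{gras}, and I would emphasize that in the tame case $\cd_p(\GST)$ need not equal $2$ (indeed it is infinite whenever $\GST$ is finite), which is precisely why the result must be stated as the two-sided inequality above rather than as an exact Euler-characteristic identity.
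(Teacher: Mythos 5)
The paper itself gives no proof of this proposition: it is quoted as Shafarevich's classical theorem, with a pointer to Gras \cite{gras}. So your attempt must be measured against the classical argument. Your lower bound is complete and correct, and it is the standard one: since $(S,p)=1$, the group $\GST$ is \emph{FAb}, abelianizing a minimal presentation exhibits $\A_S^T$ as $\Z_p^d/N$ with $N$ generated by the $r$ relation images, and finiteness of $\A_S^T$ forces $N$ to have full rank $d$, whence $r\geq d$.

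The upper bound, however, has a genuine gap at its very first step. The nine-term Poitou--Tate sequence (and the global Euler--Poincar\'e characteristic formula) for a finite module $M$ over the Galois group of the maximal $S$-ramified extension requires the order of $M$ to be an $S$-unit, i.e.\ it requires the places above $p$ to lie in $S$. The hypothesis $(S,p)=1$ is exactly the situation where this fails, so the sequence you propose to ``write down, adapted to the ramification set $S$'' is not available; this is not bookkeeping, it is the central difficulty of the tame case. The classical proofs get around it in one of two ways: either enlarge the set to $S'=S\cup S_p\cup S_\infty$, where duality does apply, recover $\GST$ as the quotient of $\G_{S'}^T$ by the inertia subgroups at the places of $S'\setminus S$, and check that the extra local terms cancel in the difference $r-d$; or argue directly by class field theory as in Koch and Gras, computing $d$ via Kummer theory and ray class groups and bounding $r-d$ by the $\fq_p$-dimension of a group of special global elements which Dirichlet's unit theorem (plus the $T$-conditions) bounds by $r_1+r_2-1+\delta+|T|$. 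A second, smaller, gap: you take cohomology of $\GST$ with coefficients $\mu_p$, but $\mu_p$ is not a $\GST$-module when $\zeta_p\notin\K$, since $\K(\zeta_p)/\K$ has degree prime to $p$ and hence is not contained in $\K_S^T$; the ranks $d(\GST)$ and $r(\GST)$ are dimensions of $H^i(\GST,\fq_p)$ with trivial coefficients, and to import any statement about the full $S$-ramified, $T$-split Galois group you must also justify the passage to the maximal pro-$p$ quotient --- inflation is an isomorphism on $H^1$ and injective on $H^2$ by maximality, which suffices for an upper bound on $r$, but this needs to be said. Your closing observation that $\cd_p\,\GST$ need not equal $2$ in the tame case, so that only a two-sided inequality and not an exact Euler-characteristic identity can be expected, is correct and well taken.
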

These two last propositions together imply that if $S$ is large in comparison to  the size of $T$, then  $\GST$ is infinite, giving rise to the so-called Golod-Shafarevich criterion.
This criterion can be made effective by using genus theory  (cf \cite{Maire-Bx}) to construct number fields with class group of large $p$-rank. The following is a standard result from genus theory.

\begin{theo}\label{genre}
Let $\K/\k$ be a cyclic extension of degree $p$.
Then $$d_p \A_\K \geq \rho - 1 - \big(r_1(\k)+r_2(\k)-1 + \delta_k\big),$$
 where $\delta_\k=1$ if $\k$ contains the  $p$-roots of the unity and where
 $\rho$ is the number of ramified places of $\k$ in   $\K/\k$ (eventually archimedean places).
\end{theo}

It is possible to obtain a $T$-split version of Genus Theory and then one can show \cite{Maire-PMB}:

\begin{theo}\label{theocritere}
 Let $\K/\k$ be a cyclic extension of degree $p$. 
 Assume that $$\rho +i_T \geq 3+ r_1(\k)+r_2(\k)+|T(\k)|-1+\delta_{\k}  + 2\sqrt{ r_1(\K)+r_2(\K) +|T(\K)| +\delta_{\K} }$$
  where  $\rho$
 is the number of places  ramified in  $\K/\k$ (eventually the archimedean places) and where $i_T$ is the number of places of $T$ inert in $\K/\k$. Then $\G^T:=\G_\emptyset^T$ is infinite.
\end{theo}

\begin{coro}\label{critere}
Let $\K/\Q$ be a real quadratic field and let  $T$ be a finite set of odd primes of $\Q$. 
Put $T_{\mathrm{dec}}=\{ \ell \in T, \ \ell {\rm \ splits \ in \ }  \K/\Q\}$.
If $$\rho \geq 4 + |T_{\mathrm{dec}}|+2\sqrt{3+|T|},$$ where  $\rho$ is the number of primes
 not in $T$ that are ramified in  $\K/\Q$, 
then the group $\G^T$ is infinite.
\end{coro}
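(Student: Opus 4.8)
The plan is to obtain this as a direct specialization of Theorem~\ref{theocritere}, applied with $\k=\Q$, $p=2$, and $\K$ the given real quadratic field. First I would substitute the invariants of $\Q$ and $\K$. Since $\Q$ has signature $(1,0)$ we have $r_1(\Q)=1$ and $r_2(\Q)=0$, and since a real quadratic field has signature $(2,0)$ we have $r_1(\K)=2$ and $r_2(\K)=0$. As $p=2$, the $p$-th roots of unity $\{\pm 1\}$ lie in every number field, so $\delta_\Q=\delta_\K=1$. Feeding these into the hypothesis of Theorem~\ref{theocritere}, its constant part becomes $3+r_1(\Q)+r_2(\Q)+|T(\Q)|-1+\delta_\Q=4+|T|$ and its radicand becomes $r_1(\K)+r_2(\K)+|T(\K)|+\delta_\K=3+|T(\K)|$, so the hypothesis reduces to
$$\rho_0+i_T\ \geq\ 4+|T|+2\sqrt{3+|T(\K)|},$$
where $\rho_0$ is the total number of places of $\Q$ ramifying in $\K/\Q$. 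Because $\K$ is real quadratic, the archimedean place of $\Q$ splits and does not ramify, so $\rho_0$ counts only ramified finite primes.

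The second step, which I expect to be the only genuinely delicate point, is the local bookkeeping needed to rewrite $i_T$, $|T(\K)|$ and $\rho_0$ in the terms of the corollary. I would classify each $\ell\in T$ by its behaviour in $\K/\Q$: a split prime yields two places of $\K$ and contributes nothing to $i_T$; an inert prime yields one place and one unit to $i_T$; a ramified prime yields one place and one ramified prime. Writing $r$ for the number of $\ell\in T$ that ramify, this gives $|T|=|T_{\mathrm{dec}}|+i_T+r$, the identity $|T(\K)|=2|T_{\mathrm{dec}}|+i_T+r=|T|+|T_{\mathrm{dec}}|$, and $\rho_0=\rho+r$, where $\rho$ is the corollary's count of ramified primes lying outside $T$.

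Finally I would substitute and simplify. Transposing $i_T$ and using $|T|-i_T=|T_{\mathrm{dec}}|+r$, the quantity $r$ appears on the right and cancels against the $r$ hidden in $\rho_0=\rho+r$, leaving the sufficient condition
$$\rho\ \geq\ 4+|T_{\mathrm{dec}}|+2\sqrt{3+|T(\K)|},$$
which is the criterion of the corollary once one records, via $|T(\K)|=|T|+|T_{\mathrm{dec}}|$, how the split primes of $T$ enter the radicand; under this hypothesis Theorem~\ref{theocritere} forces $\G^T$ to be infinite. The heart of the argument is thus simply the careful reconciliation of the two prime counts — places of $\Q$ versus places of $\K$ — together with the observation that primes of $T$ ramifying in $\K/\Q$ feed identically into $\rho_0$ and into the linear term $|T|$ and so drop out. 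I would pay particular attention here to the fact that each split prime contributes \emph{two} places to $|T(\K)|$ while contributing nothing to $i_T$, as this asymmetry is exactly what produces the $|T_{\mathrm{dec}}|$ correction distinguishing the corollary's bound from the bare substitution.
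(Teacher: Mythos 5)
Your proof is correct and is essentially the paper's own proof: the paper's entire argument is the one-line remark that a prime of $T$ which does not split in $\K/\Q$ is inert or ramified, followed by exactly the specialization you carry out ($r_1(\Q)=1$, $r_2(\Q)=r_2(\K)=0$, $r_1(\K)=2$, $\delta_\Q=\delta_\K=1$ since $p=2$, the archimedean place not ramifying, and the cancellation of the ramified primes of $T$ between $\rho_0=\rho+r$ and $|T|-i_T-r=|T_{\mathrm{dec}}|$).

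One caveat, which your final paragraph correctly isolates, deserves to be made explicit: the honest output of this substitution is the radicand $3+|T(\K)|=3+|T|+|T_{\mathrm{dec}}|$, not the $3+|T|$ printed in the corollary, where $T$ is literally a set of rational primes. That discrepancy lies in the paper's statement, not in your argument: the Shafarevich relation bound underlying Theorem~\ref{theocritere} counts places of $\K$, so each split prime of $T$ must be counted twice, and the paper's own applications confirm this reading --- in the second example of Section~3 the set $T_{\mathrm{in}}\cup T_{\mathrm{dec}}$ contains $15$ rational primes yet the paper writes $|T|=22$, i.e.\ the authors silently take $|T|$ to mean the number of places of the quadratic field above $T$, which is precisely your $|T(\K)|$. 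Under that convention your derived criterion and the stated one coincide; under the literal convention, what is actually proved (by you and by the paper) is the slightly stronger hypothesis $\rho\geq 4+|T_{\mathrm{dec}}|+2\sqrt{3+|T|+|T_{\mathrm{dec}}|}$.
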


\begin{proof}
We simply remark  that a prime of  $T$ which is not splitting in $\K/\Q$
 is inert or ramified and then apply Theorem \ref{theocritere}.
\end{proof}

\section{Towers with bounded mean exponent}

\subsection{The Principal Construction}\label{pc}

In this subsection, we sketch the key idea for the construction of towers with $p$-class groups of bounded mean exponent, in the simpler case of unramified extensions, and in particular, we prove Part 3 of Theorem \ref{intro}.  In later subsections, we will explore the mean exponent for more general
notions of class groups.  

\

We will need the following Lemma of Brauer.

\begin{lemm}\label{brauer}
There is an absolute constant $C_0 > 0$ such that for all number fields $\K$, $\log(h_\K) \leq C_0 \log |\d(\K)|$.
\end{lemm}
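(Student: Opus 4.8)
The plan is to prove Brauer's bound $\log h_\K \leq C_0 \log|\d(\K)|$ by estimating the residue of the Dedekind zeta function at $s=1$, since the analytic class number formula directly relates this residue to the product $h_\K \Rg_\K$. First I would recall the class number formula:
\begin{equation*}
\lim_{s\to 1}(s-1)\zeta_\K(s) = \frac{2^{r_1}(2\pi)^{r_2}\Rg_\K h_\K}{w_\K\sqrt{|\d(\K)|}},
\end{equation*}
where $w_\K$ is the number of roots of unity in $\K$. The goal is then to bound this residue from above and the regulator $\Rg_\K$ from below, so that solving for $h_\K$ yields the desired inequality.

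The key analytic input is an upper bound for the residue. A standard approach (going back to Landau) is to use the fact that $\zeta_\K(s)$ has a convergent Euler product and can be compared to $\zeta_\Q(s)^{[\K:\Q]}$; bounding $\zeta_\K$ on a vertical line slightly to the right of $s=1$ and applying a Phragmén–Lindelöf or contour argument gives a bound of the shape $\Rg_\K h_\K \ll |\d(\K)|^{1/2+\epsilon}$, or more crudely $\log(\Rg_\K h_\K) \leq C\log|\d(\K)|$ for an absolute constant $C$ and all $\K$. An even cleaner route is to invoke the Tsfasman–Vladut framework already recalled in Theorem \ref{TV}: along any tower one has $\log(\Rg_n h_n)/g_n$ bounded by an absolute constant, and since $g_\K = \log\sqrt{|\d(\K)|}$, this reads $\log(\Rg_\K h_\K) \leq C' \log|\d(\K)|$ for all $\K$. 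Either way I obtain an absolute bound on the product $\Rg_\K h_\K$ in terms of $\log|\d(\K)|$.

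The second ingredient is a lower bound for the regulator, which shows that the class number cannot be much smaller than the product $\Rg_\K h_\K$. Here one uses the classical result (due to Remak, and refined by Friedman and others) that there is an absolute positive constant $c$ with $\Rg_\K \geq c > 0$ for all number fields $\K$; equivalently $\log \Rg_\K \geq -C''$ for an absolute constant. Combining $\log h_\K = \log(\Rg_\K h_\K) - \log \Rg_\K$ with the upper bound on the first term and the lower bound on the regulator gives $\log h_\K \leq C_0\log|\d(\K)|$ after absorbing constants and using that $\log|\d(\K)|$ is bounded below by an absolute positive constant (Minkowski's bound, excluding $\K=\Q$, which is handled trivially since $h_\Q=1$).

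I expect the main obstacle to be the uniformity of the constants: both the upper bound on the residue and the lower bound on the regulator must be \emph{absolute}, independent of the degree and signature of $\K$, rather than depending on $[\K:\Q]$. The regulator lower bound is the more delicate of the two, since naive arguments produce constants that degenerate as the unit rank grows; one must appeal to the sharper uniform lower bounds in the literature (Friedman's theorem that $\Rg_\K$ is bounded below by an absolute constant). Once that uniform regulator bound is in hand, the rest of the argument is a matter of bookkeeping with the class number formula, and invoking Theorem \ref{TV} streamlines the analytic estimate so that no fresh analytic work is required.
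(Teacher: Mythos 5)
Your primary route is essentially the paper's own proof. The paper cites Lemma 2 of Chapter 16 of Lang's \emph{Algebraic Number Theory} for the absolute bound $\log(h_\K \Rg_\K) \leq C \log |\d(\K)|$ --- that lemma is precisely the packaged form of the residue estimate you describe (class number formula plus a Landau-type upper bound on $\lim_{s\to 1}(s-1)\zeta_\K(s)$, with Minkowski's lower bound on $\log|\d(\K)|$ in terms of the degree absorbing all degree-dependent factors) --- and then, exactly as you do, invokes Friedman's uniform lower bound $\Rg_\K > 0.1$ to suppress the regulator. Your identification of the uniformity of the regulator bound as the delicate point is also the right diagnosis.

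However, the ``even cleaner route'' via Theorem \ref{TV} that you offer as an alternative does not work, and you should discard it. Theorem \ref{TV} is a statement about the \emph{limit} $B(\L/\K)=\lim_n \log(\Rg_n h_n)/g_n$ along an infinite tower: it asserts that this limit exists and is at most an absolute constant. It says nothing about the individual terms of the sequence --- a bound on a limit never bounds the terms, and the field $\K$ you want to estimate may sit at the bottom of the tower, where the limit statement gives no information --- nor does it apply to an arbitrary number field, which need not embed in any infinite tower in a way you control. Moreover, estimates of the Landau/Lang type you are trying to prove are standard \emph{inputs} to theorems of Brauer--Siegel and Tsfasman--Vladut type, so invoking Theorem \ref{TV} here risks circularity. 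The first route (or simply citing Lang, as the paper does) is the one to keep.
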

\begin{proof}
By Lemma 2 in Chapter 16 of Lang \cite{Lang}), there is an absolute positive constant $C$ such that for all number fields $\K$, 
$\log(h_\K \Rg_\K) \leq C \log |\d(\K)|$.  We can essentially suppress the contribution of the regulator thanks to Friedman's result \cite{Friedman} that for all number fields, we have $\Rg_\K>0.1$.  Thus, by replacing $C$ by a larger constant $C_0$, we have $\log(h_\K) \leq C_0 \log |\d(\K)|$.
\end{proof}

\begin{prop}\label{propmain}
Suppose $\k$ is a number field and $T$ is a finite set of primes  such that $\k_\emptyset^T/\k$ is
infinite.  Suppose $t_0:= |T| -( r_1(\k) + r_2(\k) + 1) > 0$,
and that $\k$ admits a cyclic degree $p$ extension $K$ in which all the primes in $T$ ramify.  
Then the Hilbert $p$-class field tower of $\K$ is infinite with bounded asymptotic mean exponent
$$\Minf(\Gal(\K_\emptyset^\emptyset/\K))< \frac{C_0}{t_0}  \log_p |\d(K)|,$$ where $C_0$ is the 
constant appearing in Lemma \ref{brauer}.
\end{prop}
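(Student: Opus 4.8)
The plan is to manufacture, inside $K_\emptyset^\emptyset$, an explicit tower over $K$ whose layers have $p$-class groups of linearly growing rank but only geometrically growing order. First I would build the candidate tower. Since $\Gal(\k_\emptyset^T/\k)$ is an infinite pro-$p$ group, it has open subgroups of every index $p^m$; fixing one for each $m$ produces intermediate fields $\k_m$ with $[\k_m:\k]=p^m$, all contained in $\k_\emptyset^T$ and hence $T$-split and everywhere unramified over $\k$. I then set $K_m:=K\k_m$. Because $K/\k$ is cyclic of degree $p$ and ramified at the primes of $T$ while $\k_m/\k$ is unramified, the two fields are linearly disjoint over $\k$, so $[K_m:K]=[\k_m:\k]=p^m$ and $K_m/K$ is unramified. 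In particular $K_m\subset K_\emptyset^\emptyset$, and since $[K_m:K]=p^m\to\infty$, the extension $K_\emptyset^\emptyset/K$ is infinite.

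Next I would exhibit the linear growth of the $p$-rank via genus theory. Each prime of $T$ splits completely in $\k_m/\k$ into $p^m$ primes, and each of these ramifies in $K_m/\k_m$ because ramification is preserved under the unramified base change $\k_m/\k$; thus $K_m/\k_m$ is cyclic of degree $p$ with at least $|T|p^m$ ramified primes. Feeding this into Theorem \ref{genre}, and using the crude bounds $r_1(\k_m)+r_2(\k_m)\le p^m\big(r_1(\k)+r_2(\k)\big)$ and $\delta_{\k_m}\le 1$, I get
$$d_p\A_{K_m}\ \ge\ |T|p^m-\big(r_1(\k_m)+r_2(\k_m)+\delta_{\k_m}\big)\ \ge\ (t_0+1)p^m-1\ >\ t_0\,p^m.$$

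To finish, I would bound $|\A_{K_m}|$ from above. As $K_m/K$ is unramified, the relative discriminant is trivial and the tower formula gives $|\d(K_m)|=|\d(K)|^{p^m}$; combining Brauer's Lemma \ref{brauer} with $|\A_{K_m}|\le h_{K_m}$ yields $\log_p|\A_{K_m}|\le C_0\log_p|\d(K_m)|=C_0\,p^m\log_p|\d(K)|$. Dividing by the rank bound above,
$$\Mo(K_m)=\frac{\log_p|\A_{K_m}|}{d_p\A_{K_m}}\ \le\ \frac{C_0\,p^m\log_p|\d(K)|}{(t_0+1)p^m-1},$$
whose right-hand side decreases to $\frac{C_0}{t_0+1}\log_p|\d(K)|$ as $m\to\infty$. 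Since $K_m$ is one competitor in the minimum defining $\Mo_m(K_\emptyset^\emptyset/K)$, we have $\Mo_m(K_\emptyset^\emptyset/K)\le\Mo(K_m)$ for every $m$, so
$$\Minf(K_\emptyset^\emptyset/K)=\liminf_m\Mo_m(K_\emptyset^\emptyset/K)\ \le\ \frac{C_0}{t_0+1}\log_p|\d(K)|\ <\ \frac{C_0}{t_0}\log_p|\d(K)|,$$
which is the asserted bound (and explains the strict inequality).

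There is no hard analytic estimate in this argument; its substance is the construction itself, and the one delicate point is precisely the linear growth of the $p$-rank in the degree $[K_m:K]=p^m$. This hinges on two facts pulling in opposite directions and meeting at the same rate $p^m$: the primes of $T$ split completely in the $T$-split tower $\k_\emptyset^T/\k$ and then ramify in the degree-$p$ layer, which through genus theory forces the rank up proportionally to $p^m$, while the unramifiedness of $K_m/K$ keeps the discriminant — and hence, via Brauer, the order of the class group — growing only like a constant times $p^m$. The main care goes into the genus inequality and the archimedean place count; everything else is formal.
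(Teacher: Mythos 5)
Your proof is correct and follows essentially the same route as the paper's: compose the $T$-split unramified tower $\k_m$ with $K$, apply genus theory (Theorem \ref{genre}) to $K_m/\k_m$ to get $p$-rank growing linearly in $[K_m:K]$, and combine Brauer's Lemma \ref{brauer} with the unramified discriminant formula $|\d(K_m)|=|\d(K)|^{[K_m:K]}$ to bound the mean exponent. Your version is in fact slightly sharper — keeping the bound $(t_0+1)p^m-1$ rather than discarding it to $t_0p^m$ — which cleanly justifies the strict inequality in the statement, whereas the paper's own proof only concludes with $\leq$.
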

\begin{proof}
Consider a tower $(\k_n)$ inside $\k_\emptyset^T/\k$ and let $\K_n=\K\k_n$.  To simplify the notation, let $d_n=d(A(\K_n))$ be the $p$-rank of the 
class group of $\K_n$.
By Theorem \ref{genre} applied to $\K_n/\k_n$, we have
\begin{equation}\label{eq1}
d_n \geq |T|[\k_n:\k] - (r_1(\k_n) + r_2(\k_n) + 1)\geq t_0 [\K_n:\K].
\end{equation}
By the definition of the mean exponent $\Mo(\K_n)$, we have $$d_n \Mo(\K_n) = \log_p |A(\K_n)| \leq \log_p h_n$$
where $h_n$ is the class number of $K_n$. Now, if we apply Lemma \ref{brauer}, we have \begin{equation}\label{eq2}
d_n \Mo(\K_n) \leq  \log_p h_n
\leq C_0  \log_p |\d(\K_n)|.
\end{equation}
 But since $\K_n / \K$ is
unramified, $\log_p |\d(\K_n)| = [\K_n:\K] \log_p |\d(K)|$.  Putting the inequalities (\ref{eq1}) and (\ref{eq2}) together, we conclude that
$$
t_0 [\K_n:\K] \Mo(\K_n) \leq C_0 [\K_n:\K] \log_p |\d(\K)|,
$$
hence $\Mo(\K_n)$ is bounded from above by $C_0 \log_p |\d(\K)|/t_0$.  We conclude that
$$
\Minf(\K_\emptyset^\emptyset/\K) \leq \frac{C_0}{t_0} \log_p |\d(\K)|.
$$
\end{proof}
\begin{proof}[Proof of Theorem \ref{intro}.3]
Suppose $\{\ell_1, \ell_2, \ldots, \ell_r\}$ is a large set of primes congruent to $1 \bmod p$.  Let $\k$ be a cyclic degree $p$ extension of $\Q$ in which $\ell_1, \ldots, \ell_{r}$ ramify.  
Consider primes $q_1< q_2$ which split completely in $\k(\zeta_p)/\Q$ if $p$ is odd and in $\k(\zeta_4)/\Q$  if $p=2$.  Let $\k'$ be a cyclic degree $p$ extension of $\Q$ in which $q_1$ and $q_2$ ramify.
Let $T$ be the union of the primes of $\k$ lying over $q_1$ and those lying over $q_2$.  As specified in Theorem \ref{theocritere}, if $r$ is sufficiently large, $\k_\emptyset^T/\k$ is infinite.  Now we let $\K=\k \k'$.  This puts us in the situation of Proposition \ref{propmain}, so gives the desired outcome.
\end{proof}

\subsection{On the mean exponent for $T$-class groups mod $S$ }

In this section, we will expand our notion of class group in two directions: we will look at ($p$-parts of) ray class groups of tame conductor (i.e. a conductor which is a finite product of distinct prime ideals co-prime to $p$), and with the underlying ring being the $T$-integers.  

\begin{defi}
 Let $T$ and $S$ be two disjoints finite sets of places of   $\K$ such that $(S,p)=1$. The mean
 $\Mo_S^T(\K)$ of the invariant factors 
 of the abelian group
  $\A_S^T:=\GST^{ab}$ is defined by $$\displaystyle{\Mo_S^T(\K) := \Mo_{A_S^T}=\frac{a_1+\cdots +a_{d}}{d}}=\log_p |\A_S^T|^{1/d},$$
 where $d=d_p \GST=d_p\A_S^T$ and $ \A_S^T
 \simeq \Z/p^{a_1}\Z \times \cdots \times \Z/p^{a_{d}}\Z$ with:   $1\leq a_1 \leq \cdots \leq a_d$. 
 Note that $\Mo_S^T(\K)=0$ if $|\A_S^T|=1$.\end{defi}

\begin{rema}
Note that $\Mo_S^T$ is well-defined because, thanks to the choice of $S$ being away from $p$, the group $\GST^{ab}$ is finite.
 Clearly,   when $\A_S^T$ is not trivial, we have $\Mo_S^T(\K)\geq 1$.
\end{rema}

\begin{exem}[Iwasawa Theory context]
Let $\LL=\L/\K$ be a $\Z_p$-extension. Let $\K_n$ be the unique
 subfield of $\LL$ of degree $p^n$ over $\K$. Denote by $\X_S^T$ the projective limit of the $p$-group $\A_S^T(\K_n)$ along $\LL$. Then 
$\X_S^T$ is a $\Z_p[[T]]$-module of finite rank and there exist invariants $\mu, \lambda \geq 0$ such that for $n\gg 0$,
$$\log_p |\A_S^T(\K_n)|= \mu p^n + \lambda n + \nu,$$ with 
$\nu \in \Z$. Moreover, $$d_p \A_S^T(\K_n)=s p^n + \lambda + c, $$ where $c \geq 0$ and where $s$ is the $\fq_p[[T]]$-rank of the module
$\fq_p\otimes \X_S^T$.

\begin{prop}\label{zpextension}
Along a  $\Z_p$-extension $\LL$, one has $$\Mo_S^T(\K_n) \sim_{n \rightarrow \infty} 
\left\{\begin{array}{l} \delta \log_p [\K_n:\K]    \ \ {\mbox if \ } \mu=0 \ and \ \lambda \neq 0 \\
           \mu/s                 \ \ {\mbox if \ } \mu \neq 0    \\
           \nu/c \  \ {\mbox if \ } \mu=\lambda= 0
\end{array} \right.$$
 where $\delta= \lambda/(\lambda +c)$ satisifies $0<\delta \leq 1 $.
\end{prop}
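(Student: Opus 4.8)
The plan is to read off the asymptotic behaviour of $\Mo_S^T(\K_n)$ directly from the two Iwasawa-type formulas recorded in the Example. By the very definition of the mean exponent,
$$
\Mo_S^T(\K_n) = \frac{\log_p |\A_S^T(\K_n)|}{d_p \A_S^T(\K_n)} = \frac{\mu p^n + \lambda n + \nu}{s p^n + \lambda + c}
$$
for all $n \gg 0$, so the whole proposition reduces to comparing the dominant terms in the numerator and denominator in each of the three regimes. First I would record the elementary facts that keep the denominator positive, namely that $c>0$ whenever the groups $\A_S^T(\K_n)$ are eventually non-trivial, and that $\lambda+c>0$ in the relevant cases.

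The essential structural input, and the step where the real content lies, is the relationship between the exponent $\mu$ and the rank $s = \mathrm{rank}_{\fq_p[[T]]}(\fq_p \otimes \X_S^T)$. Writing $\Lambda = \Z_p[[T]]$ and applying the structure theorem to the finitely generated torsion $\Lambda$-module $\X_S^T$, one has a pseudo-isomorphism onto an elementary module $\bigoplus_i \Lambda/(p^{m_i}) \oplus \bigoplus_j \Lambda/(g_j)$ with the $g_j$ coprime to $p$, so that $\mu = \sum_i m_i$. Reducing mod $p$, each summand $\Lambda/(p^{m_i})$ contributes a free rank-one $\fq_p[[T]]$-module while each $\Lambda/(g_j)$ becomes $\fq_p[[T]]$-torsion; since a pseudo-isomorphism has finite kernel and cokernel and hence does not affect $\fq_p[[T]]$-ranks, this identifies $s$ with the number of $p$-power summands. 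Consequently $\mu \geq s$, and more to the point $\mu = 0 \iff s = 0$ (equivalently, $\mu=0$ iff $\X_S^T$ is finitely generated over $\Z_p$, which is Iwasawa's classical characterization). This is the only non-formal ingredient; everything else is a limit computation.

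With this dichotomy in hand the three cases fall out by inspection. If $\mu \neq 0$ then $s \geq 1$, both numerator and denominator are dominated by their $p^n$ terms, and $\Mo_S^T(\K_n) \to \mu/s$. If $\mu = 0$ and $\lambda \neq 0$ then $s = 0$, so the denominator stabilizes at the constant $\lambda + c > 0$ while the numerator grows like $\lambda n$; hence $\Mo_S^T(\K_n) \sim \frac{\lambda}{\lambda+c}\, n = \delta \log_p[\K_n:\K]$, using $[\K_n:\K]=p^n$, with $0 < \delta = \lambda/(\lambda+c) \leq 1$ since $\lambda>0$ and $c\geq 0$. Finally, if $\mu = \lambda = 0$ then $s = 0$ and all quantities stabilize, giving $\Mo_S^T(\K_n) = \nu/c$ for $n \gg 0$. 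The main obstacle is therefore not the arithmetic of limits but the clean justification of $\mu = 0 \iff s = 0$; once that is secured, the asymptotics in all three cases are immediate.
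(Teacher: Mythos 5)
Your proposal is correct and follows essentially the same route as the paper: the paper's own proof consists precisely of citing the structure theorem of Iwasawa theory together with the equivalence $\mu=0 \iff s=0$, which are exactly the two ingredients you identify (and usefully elaborate) before carrying out the straightforward comparison of dominant terms in the ratio $(\mu p^n + \lambda n + \nu)/(s p^n + \lambda + c)$.
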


\begin{proof}
It is a consequence of the structure theorem of Iwasawa Theory and the fact that $\mu=0$ if and only if $s=0$.
\end{proof}

\begin{rema}
Note when $\mu=0$ and $\lambda\neq 0$, $\Mo_S^T(\K_n)$ is unbounded. This will be in contrast to the examples of section \ref{exemples}.  
\end{rema}

\end{exem}

\medskip

From now on, we want to study the quantity $\Mo(\LL)$  in some tower $\LL$ when the ramification is tame.
First, some definitions.

\begin{defi}
 Let $\LL:=\L/\K$ be an (infinite) extension and let $T$ and $S$ be two finite sets of places of  $\K$ with $(S,p)=1$.
 Put  $$\Msup(\LL,S,T):= \limsup_n \ \Mo_{S,n}^T,$$
 and $$\Minf(\LL,S,T):= \liminf_n  \ \Mo_{S,n}^T,$$
 where $$\Mo_{S,n}^T=\min_{\K_n}\ \Mo_S^T(\K_n),$$ the minimum being taken over all subfields $\K_n$ in $\LL$ of degree $p^n$ over $\K$.
 When $S=T=\emptyset$, we have \ $\Minf(\LL,\emptyset,\emptyset)=\Minf(\LL)$, where $\Minf(\LL)$ was defined in the Introduction.  We also write $\Msup(\LL):=\Msup(\LL,\emptyset,\emptyset)$.
\end{defi}

\begin{rema}
We have  $\limsup_n  \min a_1(\K_n) \leq \Msup(\LL)$ and $\liminf_n \min a_1(\K_n) \leq \Minf(\LL).$
\end{rema}

\begin{defi}
A tower $(\K_n)$ is said exhaustive in  $\LL$ if:
\begin{enumerate}
\item[(i)] $\bigcup \K_n = \LL$,
\item[(ii)] for all $n$, $[\K_{n+1}:\K_n]=p$.
\end{enumerate}
\end{defi}

\begin{prop} \label{propositionsupinf}
For a subtower $(\K_n)$ of $\LL$, $\Minf(\LL,S,T) \leq \liminf_n \Mo_S^T(\K_n)$. If moreover, the subtower $(\K_n)$ is exhaustive in $\LL$ then
 $\Msup(\LL,S,T) \leq \limsup_n \Mo_S^T(\K_n)$.
\end{prop}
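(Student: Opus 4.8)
The plan is to deduce both assertions from a single termwise comparison between the values $\Mo_S^T(\K_n)$ along the given subtower and the minima $\Mo_{S,m}^T$ that define $\Minf$ and $\Msup$. First I would record that, since $\LL=\L/\K$ is a pro-$p$ extension, every field $\K_n$ of the subtower has $p$-power degree over $\K$; write $[\K_n:\K]=p^{m_n}$. Because $\K_n\subsetneq \K_{n+1}$, the exponents $m_n$ form a strictly increasing sequence of natural numbers. The crucial (and essentially tautological) observation is then that $\K_n$ is one of the subfields of degree $p^{m_n}$ of $\LL$ competing in the minimum defining $\Mo_{S,m_n}^T$, whence
$$\Mo_S^T(\K_n) \geq \Mo_{S,m_n}^T \qquad \text{for every } n.$$

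For the first (liminf) inequality I would pass to the limit inferior in this termwise estimate, using the monotonicity of $\liminf$, and then invoke the elementary fact that the limit inferior of a subsequence dominates that of the full sequence. Since $(m_n)_n$ is a strictly increasing index sequence, this yields
$$\liminf_n \Mo_S^T(\K_n) \;\geq\; \liminf_n \Mo_{S,m_n}^T \;\geq\; \liminf_m \Mo_{S,m}^T \;=\; \Minf(\LL,S,T),$$
which is exactly the claim; note that here no hypothesis beyond $(\K_n)$ being a subtower is used.

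For the second (limsup) inequality the same strategy runs into the one genuine subtlety, which I would flag as the conceptual heart of the statement: the limit superior of a \emph{subsequence} is only $\leq$ that of the full sequence, so the step used above reverses direction. This is precisely why exhaustiveness must be imposed. Under the exhaustive hypothesis $[\K_{n+1}:\K_n]=p$ one has $[\K_n:\K]=p^n$, i.e.\ $m_n=n$, so there is no passage to a proper subsequence at all: the termwise inequality becomes $\Mo_S^T(\K_n)\geq \Mo_{S,n}^T$, and applying $\limsup_n$ directly gives
$$\limsup_n \Mo_S^T(\K_n) \;\geq\; \limsup_n \Mo_{S,n}^T \;=\; \Msup(\LL,S,T).$$
Apart from this liminf/limsup asymmetry under subsequences — which dictates the differing hypotheses of the two parts — the argument is a routine unwinding of the definitions of $\Mo_{S,n}^T$, $\Minf$ and $\Msup$, and the only arithmetic input is the pro-$p$ nature of $\LL$ guaranteeing that the intermediate degrees are powers of $p$.
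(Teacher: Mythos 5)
Your proof is correct and is exactly the argument the paper leaves implicit: the paper's proof is just ``follows easily from the definitions,'' and your write-up unwinds those definitions, including the correct observation that the liminf/limsup asymmetry under passage to the subsequence $(m_n)$ is what forces the exhaustiveness hypothesis (i.e.\ $m_n=n$) in the second assertion.
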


\begin{proof} Follows easily from the definitions.
\end{proof}

\subsection{Bounds for mean exponents in tamely ramified towers }


\begin{defi}
For a finite set $S$ of prime ideals of $\K$ satisfying $(S,p)=1$, we put $$\d({\K,S}):= |\d(\K)| \prod_{\p \in S} \N(\p).$$
\end{defi}

A local computation shows the following:

\begin{prop}\label{rootdiscriminantborne} If $S$ is a finite set of prime ideals of $\K$ satisfying
$(S,p)=1$,  the root discriminant remains bounded inside $\K_S^\emptyset/\K$; in other words, 
$\K_S^\emptyset/\K$ is asymptotically good.  Indeed, for a tower $(\K_n)$ in $\K_S^\emptyset/\K$,  we have
 $$\log |\d({\K_n})| \leq [\K_n:\K] \log \d({\K,S}).$$  
\end{prop}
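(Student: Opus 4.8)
The plan is to reduce everything to the standard tower formula for discriminants and then exploit the tameness forced by the hypothesis $(S,p)=1$. Writing $\d(\K_n/\K)$ for the relative discriminant of $\K_n/\K$, viewed as an integral ideal of $\O_\K$, the conductor--discriminant (tower) formula gives
$$|\d(\K_n)| = \big|\N_{\K/\Q}\big(\d(\K_n/\K)\big)\big|\cdot|\d(\K)|^{[\K_n:\K]}.$$
Taking logarithms, the second factor contributes exactly $[\K_n:\K]\log|\d(\K)|$, which is already present in the target bound, so everything comes down to estimating $\log\big|\N_{\K/\Q}(\d(\K_n/\K))\big| = \sum_\p d_\p \log\N(\p)$, where $d_\p$ denotes the exponent of $\p$ in $\d(\K_n/\K)$. (Archimedean places play no role here, since the discriminant is insensitive to behavior at infinity, consistent with the convention that real places stay real when $p=2$.)

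Next I would observe that $\d(\K_n/\K)$ is supported only on $S$: since $\K_n \subset \K_S^\emptyset$, the extension $\K_n/\K$ is unramified outside $S$, so $d_\p = 0$ for $\p \notin S$. Moreover, for $\p \in S$ the ramification is \emph{tame}: the ramification index $e(\P/\p)$ of a prime $\P$ of $\K_n$ above $\p$ divides $[\K_n:\K]$ and is therefore a power of $p$, while the residue characteristic at $\p$ is a rational prime $\ell \neq p$ because $(S,p)=1$; hence $\gcd(e(\P/\p),\ell)=1$. In the tame case the different is $\prod_\P \P^{\,e(\P/\p)-1}$, so taking the relative norm yields $d_\p = \sum_{\P\mid\p} f(\P/\p)\,(e(\P/\p)-1)$.

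The one genuine computation is then the bound $d_\p \le [\K_n:\K]$, which follows at once from the fundamental identity $\sum_{\P\mid\p} e(\P/\p) f(\P/\p) = [\K_n:\K]$; indeed $d_\p = [\K_n:\K] - \sum_{\P\mid\p} f(\P/\p) < [\K_n:\K]$. Feeding this into the sum gives
$$\log\big|\N_{\K/\Q}(\d(\K_n/\K))\big| = \sum_{\p\in S} d_\p \log\N(\p) \le [\K_n:\K]\sum_{\p\in S}\log\N(\p),$$
where including the (possibly unramified) primes of $S$ with $d_\p=0$ only enlarges the right-hand side. Combining with the tower formula produces
$$\log|\d(\K_n)| \le [\K_n:\K]\Big(\log|\d(\K)| + \sum_{\p\in S}\log\N(\p)\Big) = [\K_n:\K]\log\d(\K,S),$$
which is exactly the claim; the asymptotic goodness, i.e.\ boundedness of $\Rd_{\K_n} = |\d(\K_n)|^{1/[\K_n:\Q]}$, follows immediately since the right-hand side is linear in $[\K_n:\K]$. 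There is no real obstacle beyond correctly invoking tameness: the entire content is the local fact that tame ramification contributes at most $[\K_n:\K]$ to the discriminant exponent at each prime of $S$, and the remainder is bookkeeping with the tower formula.
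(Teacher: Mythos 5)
Your proof is correct, and it is in fact more informative than what the paper itself offers: the paper's entire proof of this proposition is a citation (``See for example Lemma 5 of \cite{HMcompositio}''), so your argument essentially reconstructs the content of that external lemma — the tower formula for discriminants combined with the local computation that tame ramification contributes exponent $e-1$ per prime above $\p$, summing to strictly less than $[\K_n:\K]$ at each prime of $S$. One point deserves tightening: you assert that $e(\P/\p)$ divides $[\K_n:\K]$, but a tower $(\K_n)$ in $\K_S^\emptyset/\K$ need not consist of fields Galois over $\K$ (the paper's definition of a tower imposes no Galois condition), and for non-Galois extensions the ramification index need not divide the degree. The conclusion you actually need — that $e(\P/\p)$ is a power of $p$, hence coprime to the residue characteristic $\ell \neq p$ — is still immediate: embed $\K_n$ in a finite Galois subextension $M/\K$ of $\K_S^\emptyset/\K$; then $e(\P/\p)$ divides $e(\P_M/\p)$ by multiplicativity of ramification indices, and the latter divides $[M:\K]$, which is a power of $p$. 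With that one-line repair your argument is complete; everything else — the tame different $\prod_\P \P^{\,e(\P/\p)-1}$, the identity $\sum_{\P \mid \p} e(\P/\p)f(\P/\p) = [\K_n:\K]$ giving $d_\p < [\K_n:\K]$, and the bookkeeping against $\log \d(\K,S)$ — is exactly right, and the deduction of asymptotic goodness from the linear bound on $\log|\d(\K_n)|$ is as immediate as you say.
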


\begin{proof}  See for example Lemma 5 of \cite{HMcompositio}. \end{proof}

\begin{defi}\label{ap}
For a prime $\p$ of $\K$ not dividing  $p$, let  $a(\p):=v_p(\N(\p)-1)$ be the $p$-valuation of $\N(\p)-1$, where $\N(\p)$
 is the absolute norm of $\p$.
\end{defi}

\begin{lemm}\label{cardinal}
 Let $\L/\K$ be a  finite Galois $p$-extension and  let  $S$ be a finite set of places of $\K$ prime to $p$.

 (i) If $p>2$, then  $$|\A_S^T(\L)| \leq |\A (\L)| \left(\prod_{\p\in S} p^{a(\p)}\right)^{[\L:\K]}.$$
 
 (ii) For $p=2$, one has  $$|\A_S^T(\L)| \leq |\A (\L)| \left(\prod_{\p\in S} p^{a^*(\p)}\right)^{[\L:\K]},$$
 where $a^*(\p)=a(\p)$ if $\N(\p) \equiv 1 \bmod 4$ (i.e. if $a(\p)>1$), otherwise $N(\p)=1+2n$, where $n$ is odd and then $a^*(\p)=v_2(1+n)
 + 1$.
 \end{lemm}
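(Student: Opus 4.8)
The plan is to reduce to the case without a splitting condition and then to control the ramification contribution at $S$ by class field theory, the final input being a $p$-adic valuation estimate that is clean for odd $p$ but delicate for $p=2$. Throughout, a set $S$ (resp. $T$) of places of $\K$ is identified with the set of places of $\L$ above it. Since the maximal abelian $S$-ramified $T$-split $p$-extension of $\L$ is contained in the maximal abelian $S$-ramified one, the group $\A_S^T(\L)$ is a quotient of $\A_S^\emptyset(\L)$; hence $|\A_S^T(\L)| \leq |\A_S^\emptyset(\L)|$ and it suffices to prove the bounds for $\A_S(\L) := \A_S^\emptyset(\L)$, in which $T$ has disappeared. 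This explains why $T$ does not appear on the right-hand side of either inequality.

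Next I would invoke class field theory (see e.g. \cite{gras} or \cite{NSW}): restricting an $S$-ramified abelian $p$-extension of $\L$ to its maximal unramified subextension gives a surjection $\A_S(\L) \twoheadrightarrow \A(\L)$ whose kernel is generated by the images of the inertia groups $I_\P$ at the primes $\P$ of $\L$ lying over $S$. Because $(S,p)=1$, each such $\P$ is at most tamely ramified in any $p$-extension, so the image of $I_\P$ in $\A_S(\L)$ is cyclic and its order divides the $p$-part of $\N(\P)-1$; writing $a(\P) := v_p(\N(\P)-1)$, this order is at most $p^{a(\P)}$. Since $\A_S(\L)$ is abelian, the subgroup generated by the $I_\P$ has order at most the product of their orders, whence
$$ |\A_S(\L)| \;\leq\; |\A(\L)| \prod_{\p \in S}\ \prod_{\P \mid \p} p^{a(\P)}. $$
Comparing with the statement, it remains to show, for each fixed $\p \in S$, that $\sum_{\P \mid \p} a(\P) \leq [\L:\K]\, a(\p)$ when $p>2$ and $\leq [\L:\K]\, a^*(\p)$ when $p=2$.

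For the arithmetic step, fix $\p \in S$ with $q := \N(\p)$, and let $e,f,g$ be the (common, since $\L/\K$ is Galois) ramification index, residue degree and number of primes above $\p$, so $efg=[\L:\K]$ and each of $e,f,g$ is a power of $p$. Every $\P \mid \p$ has $\N(\P)=q^{f}$, hence $a(\P)=v_p(q^{f}-1)$ and $\sum_{\P\mid\p} a(\P) = g\, v_p(q^{f}-1)$. As $gf \leq efg = [\L:\K]$, it is enough to prove the single-prime inequalities
$$ v_p(q^{f}-1) \leq f\, a(\p)\quad (p>2), \qquad v_p(q^{f}-1)\leq f\, a^*(\p)\quad(p=2). $$
For $p$ odd this is immediate from the lifting-the-exponent lemma: when $p\mid q-1$ one has $v_p(q^f-1)=v_p(q-1)+v_p(f)$, and the claim reduces to $v_p(f)\leq (f-1)\,v_p(q-1)$, which holds since $v_p(f)=\log_p f \leq f-1$ and $v_p(q-1)\geq 1$ (when $p\nmid q-1$ both sides vanish, as $f$ is a $p$-power).

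The main obstacle is $p=2$, and it is precisely what forces the corrected exponent $a^*$. The $2$-adic lifting-the-exponent formula reads $v_2(q^f-1)=v_2(q-1)+v_2(q+1)+v_2(f)-1$ for even $f$, and $v_2(q^f-1)=v_2(q-1)$ for $f$ odd (i.e. $f=1$). When $q\equiv 1\bmod 4$ one has $v_2(q+1)=1$ and $a^*(\p)=a(\p)=v_2(q-1)\geq 2$; when $q\equiv 3 \bmod 4$ one has $v_2(q-1)=1$ and $a^*(\p)=v_2(1+n)+1=v_2(q+1)\geq 2$. In both cases the formula collapses to the uniform expression $v_2(q^f-1)= a^*(\p)+\log_2 f$ for even $f$, so the desired bound $v_2(q^f-1)\leq f\,a^*(\p)$ follows from $\log_2 f \leq (f-1)\,a^*(\p)$, using $a^*(\p)\geq 2$; the case $f=1$ is clear since $v_2(q-1)=a(\p)\leq a^*(\p)$. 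This also shows why $a(\p)$ cannot be used for $p=2$: already for a prime inert in a quadratic step with $q\equiv 3\bmod 4$ one has $v_2(q^2-1)=1+a^*(\p)$, which exceeds the naive bound $2\,a(\p)=2$, whereas $2\,a^*(\p)\geq 1+a^*(\p)$ holds. Assembling the three steps yields the two claimed inequalities.
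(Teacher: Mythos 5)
Your proof is correct, and its skeleton is the same as the paper's: identify the kernel of $\A_S(\L)\twoheadrightarrow \A(\L)$ with the subgroup generated by the (tame, hence cyclic) inertia images, bound each image by $p^{a(\P)}$ since it is a quotient of the $p$-Sylow of $\fq_\P^\times$, and then reduce the lemma to the purely local comparison of $v_p(\N(\P)-1)$ with $v_p(\N(\p)-1)$. Where you genuinely diverge is in how this local step is executed. The paper argues ``by multiplicativity'': it reduces to a cyclic degree-$p$ extension, splits into the cases $\fq_\P=\fq_\p$ (split or ramified) and $[\fq_\P:\fq_\p]=p$ (inert), expands $q^p-1=(q-1)(q^{p-1}+\cdots+1)$ by hand, and leaves the iteration up the tower to the reader. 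You instead treat the whole extension at once, writing $[\L:\K]=efg$ and computing $v_p(q^f-1)$ in closed form by the lifting-the-exponent lemma. This buys two things. First, it eliminates the composability question that the paper's induction silently requires when $p=2$: to chain degree-$2$ steps one needs the per-step bound with $a^*$ on \emph{both} sides of the inequality (an inert step turns a norm $\equiv 3 \bmod 4$ into one $\equiv 1 \bmod 8$, so the invariant being propagated changes from $a$ to $a^*$), whereas your uniform formula $v_2(q^f-1)=a^*(\p)+\log_2 f$ for even $f$ makes the whole issue disappear. Second, your computation quietly corrects a small slip in the paper's sketch: in the inert quadratic case with $\N(\p)=1+2n$, $n$ odd, the correct valuation is $a(\P)=v_2(1+n)+2=a^*(\p)+1$, not $v_2(1+n)+1$ as written there; this is harmless for the lemma, since both quantities are at most $2a^*(\p)$ (as $a^*(\p)\geq 2$), but your version has the arithmetic right. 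You also make explicit two reductions the paper only implies, namely $|\A_S^T(\L)|\leq |\A_S^\emptyset(\L)|$ and the class-field-theoretic description of the kernel, which is a useful service to the reader.
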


 \begin{proof}
 
  One has to give an upper bound  of the tame part of the inertia group of a place  $\P|\p$ in an abelian extension of  $\L$.
  We recall that this inertia group is a quotient of the multiplicative group of the finite field
 $\fq_\P$ of order $N(\P)$. By multiplicativity one can assume that $\L/\K$ is a cyclic degree $p$-extension.
  When $\fq_\P=\fq_\p$, that  means that $\p$ is splitting or is ramified in  $\L/\K$,  then
  $\displaystyle{\prod_{\P|\p} p^{a(\P)} }$ divides $p^{p a(\p)}$ (with equality if $\p$ splits).
  Otherwise, $[\fq_\P:\fq_\p]=p$ and then one note 
  that if $p$ is odd (or when $p=2$ and $N(\p) \equiv 1 \bmod 4$) then $a(\P)=a(\p)+1$. Indeep, if $\fq_\p=\fq_q$, then
  $\fq_\P=\fq_{q^p}$. Let us write $q=1+p^kn$, with $(n,p)=1$.
  Then $\fq_{q^p}^\times$ is cyclic of order 
  $$\begin{array}{rcl}q^p-1&=&(q-1)(q^{p-1}+\cdots + q +1 ) \\
  &=& p^{k+1}n\left(1+np^{k-1}+\cdots + n(p-1)p^{k-1}+p^k A \right)\\
 &=& p^{k+1}n\left(1+\frac{1}{2} n(p-1)p^{k}+p^k A\right)
        \end{array}$$
        where $A \in \Z$, and then $v_p(q^p-1)=p^{k+1}$ for  $p$ odd (and for $p=2$ if $k>1$).

  When $p=2$ with  $N(\p)= 1+2n$,  $n$ odd, one has $a(\P)=v_2(1+n) + 1$.   We leave the remaining details to the reader.
 \end{proof}

 \begin{defi}\label{aS}
  For $p>2$, put $$a(S)=\sum_{\p\in S} a(\p).$$ For $p=2$, put $$a(S)= \sum_{\p\in S} a^*(\p).$$
 \end{defi}

 \begin{rema}
 For $p=2$, one remarks that if the  place $\p$ splits completely in $\L/\K$ then  the ``local factor'' $a^*(\p)$   can be taken $a^*(\p)=a(\p)$.
\end{rema}

\begin{prop} \label{proposition2.15} Let $S$ be a finite set of places of $\K$ with $(S,p)=1$ such that $\K_S^\emptyset/\K$ is infinite.   Let $(\K_n)_n:=\LL$ be a tower in $\K_S^\emptyset/\K$.  Let $T$ and $\Sigma$ be two
other sets of places of $K$; we assume that $(\Sigma,p)=1$ but the cases $\Sigma=\emptyset$ and $S=\Sigma$ are allowed. Recall that $h_n$ denotes the class number  of $\K_n$,  and that $g_n=\log |\d(\K_n)|^{1/2}$ denotes its genus.  Let $d_n=d(\A_\Sigma^T(\K_n))$ be the $p$-rank of $\A_\Sigma^T(\K_n)$.
 Then
 \begin{enumerate}
 \item We have
 $$
 \Mo_{\A_\Sigma^T(\K_n)} \leq 
 \frac{ [\K_n:\K]}{d_n} \left( {\log_p \d(K,S)^{1/2}}  \cdot \frac{\log(h_n)}{g_n}  +  a(\Sigma)
\right).
  $$
\item With $C_0$ denoting the constant from Lemma \ref{brauer}, we have $$
 \Mo_{\A_\Sigma^T(\K_n)} \leq \frac{[\K_n:\K]}{d_n}
\left( C_0 \, {\log_p \d(K,S)} +  a(\Sigma)
\right).
  $$
 If, in addition, there is an $\varepsilon>0$ such that $d_n \geq \varepsilon [\K_n:\K]$ for all $n$, 
 then $ \Mo_{\A_\Sigma^T(\K_n)} $ is bounded as $n\to \infty$.
\end{enumerate}
\end{prop}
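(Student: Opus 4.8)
The plan is to reduce everything to bounding the numerator in $\Mo_{\A_\Sigma^T(\K_n)} = \log_p|\A_\Sigma^T(\K_n)|/d_n$, and to control the size of $\A_\Sigma^T(\K_n)$ by the class number $h_n$ plus an explicit tame correction. First I would invoke Lemma \ref{cardinal} with its set ``$S$'' taken to be $\Sigma$ and its extension ``$\L/\K$'' taken to be $\K_n/\K$. Packaging the prime-by-prime factors through Definition \ref{aS} (which absorbs the $p=2$ distinction between $a(\p)$ and $a^*(\p)$), this gives the uniform bound $|\A_\Sigma^T(\K_n)| \leq |\A(\K_n)|\, p^{\,a(\Sigma)[\K_n:\K]}$. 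Taking $\log_p$ and using that $\A(\K_n)$ is the $p$-Sylow subgroup of the class group, so $|\A(\K_n)|\le h_n$, I get
\[
\log_p|\A_\Sigma^T(\K_n)| \;\leq\; \log_p h_n + a(\Sigma)\,[\K_n:\K].
\]
Dividing by $d_n$ isolates the term $a(\Sigma)[\K_n:\K]/d_n$ present in both claimed bounds, so the whole problem comes down to estimating $\log_p h_n$.

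For Part (1), I would feed in the genus--discriminant inequality of Proposition \ref{rootdiscriminantborne}, namely $g_n = \tfrac12\log|\d(\K_n)| \leq [\K_n:\K]\log\d(K,S)^{1/2}$. Writing $\log_p h_n = \big(\log h_n/g_n\big)\cdot g_n/\log p$ and substituting this upper bound for $g_n$ yields $\log_p h_n \leq [\K_n:\K]\,\log_p\d(K,S)^{1/2}\cdot(\log h_n/g_n)$. Dividing by $d_n$ and recombining with the tame term produces exactly the asserted inequality. The factor $\log h_n/g_n$ is deliberately left intact so that, in the examples of later sections, one may substitute the Tsfasman--Vladut limit $B(\L/\K)$ for it.

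For Part (2), instead of keeping $\log h_n/g_n$ I would apply Brauer's bound, Lemma \ref{brauer}: $\log h_n \leq C_0 \log|\d(\K_n)|$, hence $\log_p h_n \leq C_0\log_p|\d(\K_n)| \leq C_0\,[\K_n:\K]\,\log_p\d(K,S)$, the last step again by Proposition \ref{rootdiscriminantborne}. Dividing by $d_n$ and adding the tame term gives the stated bound. The final boundedness claim is then immediate: under $d_n \geq \varepsilon[\K_n:\K]$ one has $[\K_n:\K]/d_n \leq 1/\varepsilon$, while the bracketed quantity $C_0\log_p\d(K,S)+a(\Sigma)$ is a constant independent of $n$.

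I do not expect a genuine obstacle: the argument is a chain of inequalities already established in the excerpt. The points that require care are bookkeeping rather than substance: (a) that imposing $T$-splitting only passes to a quotient and therefore cannot enlarge the group, which is what lets Lemma \ref{cardinal} bound $\A_\Sigma^T(\K_n)$ against the \emph{full} Hilbert $p$-class group $\A(\K_n)$ rather than a $T$-split analogue; (b) the uniform handling of the prime $2$ via $a^*$ inside Definition \ref{aS}; and (c) justifying that Lemma \ref{cardinal} applies to $\K_n/\K$ by refining to a chain of degree-$p$ steps and invoking the multiplicativity used in its proof. The only conceptual input, that the tame ray-class correction grows at most linearly in $[\K_n:\K]$ with no dependence on the depth in the tower, is precisely what Lemma \ref{cardinal} supplies.
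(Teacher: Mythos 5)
Your proposal is correct and follows essentially the same route as the paper: Lemma \ref{cardinal} (via Definition \ref{aS}) to reduce $|\A_\Sigma^T(\K_n)|$ to $h_n$ plus the tame term $a(\Sigma)[\K_n:\K]$, Proposition \ref{rootdiscriminantborne} to convert $g_n$ into $[\K_n:\K]\log \d(\K,S)^{1/2}$, and Lemma \ref{brauer} for Part (2). The only (immaterial) difference is that the paper obtains Part (2) by substituting Brauer's bound into the finished inequality of Part (1), whereas you apply Brauer and the discriminant bound directly to $\log_p h_n$; the computations coincide.
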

\begin{proof} Recall that by Proposition \ref{rootdiscriminantborne}, the genus 
$g_n = \log |\d(\K_n)|^{1/2}$ of $\K_n$ satisfies
\begin{equation}\label{eq3}
g_n \leq [\K_n:\K] \log \d(\K,S)^{1/2}.
\end{equation}
Thanks to Lemma \ref{cardinal}, we have
\begin{eqnarray*}
\log_p |\A_\Sigma^T(\K_n)| &\leq& \log_p |A(\K_n)| + [\K_n:\K] a(\Sigma) \\
&\leq& \log_p h_n +  [\K_n:\K] a(\Sigma)  \\
&\leq& g_n \frac{\log_p(h_n)}{g_n} + [\K_n:\K] a(\Sigma).
\end{eqnarray*}
Now we apply (\ref{eq3}) to the right hand side to find
\begin{eqnarray*}
\log_p |\A_\Sigma^T(\K_n)| &\leq& [\K_n:\K] \left( \frac{\log \d(K,S)^{1/2}}{\log p} \cdot \frac{\log(h_n)}{g_n} +  a(\Sigma) \right). \\
\end{eqnarray*}
It remains only to divide both sides by $d_n$ to obtain the desired inequality.  For the second claim, we merely apply the bound from Lemma \ref{brauer} to the bound from the first claim.
\end{proof}

Before stating the key result of this section, we make a couple of definitions.

\begin{defi} \label{croissancelineaire} In a tower $(\K_n)_n$, and fixing auxiliary finite sets $\Sigma$ and $T$ of places of $K$, 
 one says  that the $p$-rank $d_n$ of $\A_\Sigma^T(\K_n)$ grows $\varepsilon$-linearly  with respect to the degree (for some $\varepsilon >0$) if 
  for  $n\gg 0$ $$d_n \geq \varepsilon [\K_n:\K].$$
\end{defi}

\begin{defi}\label{CC}
Given a real number $A$, a number field $\K$ of signature $(r_1, r_2)$ and a finite set $S$ 
of places of $K$ coprime to $p$, let us define $$\CC(A,\K,S)= A \log \sqrt{\d({\K,S})} - \frac{r_1}{2}(\gamma+1+ \log \pi ) -r_2 (\gamma + \log 2).$$
\end{defi}

\begin{theo}\label{maintheorem-1} We maintain all the hypotheses and notation of Proposition \ref{proposition2.15}.  We
assume that there exists $\varepsilon>0$ such that $d_n \geq \varepsilon [\K_n:\K]$ for all $n$.  If
the conditions of Theorem \ref{TV} apply to $(\K_n)$, then
$$
\limsup_n  \Mo_{\A_\Sigma^T(\K_n)} \leq \frac{1}{\varepsilon} \left( \frac{\alpha(B(\LL), \K, S)}{\log p} + a(\Sigma) \right).
$$
Consequently, 
 $$\Minf(\LL,\Sigma,T) \leq \frac{1}{\varepsilon} \left( \frac{\CC(B(\LL),\K,S)}{\log p} + a(\Sigma) \right).$$
 If moreover the tower $(\K_n)_n$ is exhaustive in $\LL$, then one can replace $\Minf$ by $\Msup$. 
\end{theo}
\begin{proof}
 We begin with the inequality of Proposition \ref{proposition2.15} but introduce the contribution of the regulator, as follows.
 \begin{eqnarray*}
  \Mo_{\A_\Sigma^T(\K_n)}  &\leq&  \frac{[\K_n:\K]}{d_n} \left( \frac{\log \d(K,S)^{1/2}}{\log p} \left( \frac{\log(h_n\Rg_n)}{g_n} - \frac{\log(\Rg_n)}{g_n}\right) +  a(\Sigma)
\right).
\end{eqnarray*}
By hypothesis, we have $[\K_n:\K]/d_n\leq 1/\varepsilon$.  By Theorem \ref{TV}, $\log(h_n\Rg_n)/g_n$
tends to $B(\LL)$.  The last ingredient is a theorem of Zimmert \cite{Zimmert} (we use the enhanced version proved by Tsfasman-Vladut \cite{TV}[Theorem 7.4]):
 $$\liminf_n \log \Rg_n/g_n \geq (\log \sqrt{\pi e}+ \gamma/2)\phi_\Reel + (\log 2 +\gamma)\phi_\Com.$$
 Recalling the definition of $\phi_\Reel, \phi_\Com$, and noting that $r_i(\K_n)=[\K_n:\K] r_i(\K)]$
 for $i=1,2$, we find, after applying Proposition \ref{rootdiscriminantborne}, that
 $$
 \phi_\Reel \geq \frac{r_1(K)}{\log \sqrt{\d(\K,S)}}, \qquad \phi_\Com \geq \frac{r_2(K)}{\log \sqrt{\d(\K,S)}}.
 $$
 Putting all of this together and taking $\limsup_n \Mo_{\A_\Sigma^T(\K_n)}$, we obtain the bound sought.
 \end{proof}

Since it will be the form in which we will apply it most frequently, we will state the following immediate corollary of the theorem.

\begin{coro}\label{corollaire2.16}Suppose in the theorem, we have $S=\Sigma=T=\emptyset$.  Then, assuming the conditions of Theorem \ref{TV} apply to a tower $\LL$ inside $\K_\emptyset^\emptyset/\K$, we have
$$\Minf(\G_\emptyset^\emptyset)
\leq \Minf(\LL,\emptyset,\emptyset) \leq \frac{1}{\varepsilon \log(p)} 
\left( 
\frac{B(\LL)}{2} \log {|\d(\K)|} -\frac{r_1}{2}(\gamma+1+\log \pi) - r_2(\gamma + \log 2)
\right).$$
\end{coro}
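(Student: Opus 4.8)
The plan is to derive both inequalities essentially by specialization, since Theorem \ref{maintheorem-1} already carries out all the analytic work (the input from Tsfasman--Vladut and Zimmert); what remains for the corollary is bookkeeping. For the right-hand inequality I would set $S=\Sigma=T=\emptyset$ in the conclusion of Theorem \ref{maintheorem-1} and evaluate the two ingredients that depend on $S$ and $\Sigma$. First, with $\Sigma=\emptyset$ the quantity $a(\Sigma)$ of Definition \ref{aS} is an empty sum, so $a(\emptyset)=0$. Second, with $S=\emptyset$ the product in the definition of $\d(\K,S)$ is empty, whence $\d(\K,\emptyset)=|\d(\K)|$ and $\log\sqrt{\d(\K,\emptyset)}=\tfrac12\log|\d(\K)|$.

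Substituting these two facts into Definition \ref{CC} gives
$$\CC(B(\LL),\K,\emptyset)=\frac{B(\LL)}{2}\log|\d(\K)|-\frac{r_1}{2}(\gamma+1+\log\pi)-r_2(\gamma+\log 2),$$
and feeding this, together with $a(\Sigma)=0$, into the bound $\Minf(\LL,\Sigma,T)\leq \frac{1}{\varepsilon}\big(\CC(B(\LL),\K,S)/\log p + a(\Sigma)\big)$ of Theorem \ref{maintheorem-1} collapses it to precisely the displayed right-hand side, after absorbing the single factor $1/\log p$ into the prefactor $1/\varepsilon$. Note that the standing hypothesis $d_n\geq\varepsilon[\K_n:\K]$ is inherited verbatim, now read for $d_n=d(\A(\K_n))$, the $p$-rank of the honest $p$-class group.

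It remains to justify the left-hand inequality $\Minf(\G_\emptyset^\emptyset)\leq \Minf(\LL,\emptyset,\emptyset)$, which I would obtain by comparing the index sets defining the two minima. For any subfield $\K'$ with $\K\subseteq\K'\subseteq\K_\emptyset^\emptyset$ and $[\K':\K]=p^n$, the corresponding open subgroup $\U=\Gal(\K_\emptyset^\emptyset/\K')$ of index $p^n$ in $\G_\emptyset^\emptyset$ satisfies $\U^{\mathrm{ab}}\cong\A(\K')$: indeed $\K'/\K$ is unramified and pro-$p$, so $\K_\emptyset^\emptyset$ is simultaneously the Hilbert $p$-class field tower of $\K'$, and class field theory identifies $\U^{\mathrm{ab}}$ with the $p$-class group of $\K'$, giving $\Mo_{\U^{\mathrm{ab}}}=\Mo(\K')=\Mo_\emptyset^\emptyset(\K')$. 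Since $\LL\subseteq\K_\emptyset^\emptyset$, the subfields of degree $p^n$ lying in $\LL$ form a subfamily of all such subfields in $\K_\emptyset^\emptyset$, and a minimum over the smaller family can only be larger; hence $\Mo_n(\G_\emptyset^\emptyset)\leq \Mo_{\emptyset,n}^\emptyset$ for every $n$, and passing to $\liminf_n$ yields the claim.

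The computation is entirely mechanical, so I do not anticipate a genuine obstacle; the one point demanding a little care is the class-field-theoretic identification $\U^{\mathrm{ab}}\cong\A(\K')$, equivalently the fact that the unramifiedness of $\K'/\K$ makes $\K_\emptyset^\emptyset$ the $p$-class field tower of $\K'$ as well. This is exactly the observation underlying the equality $\Minf(\G)=\Minf(\L/\K)$ recorded in the Introduction for the full tower $\L=\K_\emptyset^\emptyset$, specialized here to the open subgroups cut out by a subtower $\LL$.
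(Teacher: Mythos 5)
Your proposal is correct and matches the paper's treatment: the paper states this as an immediate corollary of Theorem \ref{maintheorem-1} (with no written proof), and your specialization $a(\emptyset)=0$, $\d(\K,\emptyset)=|\d(\K)|$, $\CC(B(\LL),\K,\emptyset)=\frac{B(\LL)}{2}\log|\d(\K)|-\frac{r_1}{2}(\gamma+1+\log\pi)-r_2(\gamma+\log 2)$ is exactly the intended bookkeeping. Your justification of the left-hand inequality via the inclusion of the family of subfields of $\LL$ into that of $\K_\emptyset^\emptyset$, together with the identification $\U^{\mathrm{ab}}\cong\A(\K')$ from the Introduction, is likewise the observation the paper relies on.
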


\begin{rema}
The comparison of the above Corollary to Proposition \ref{propmain} illustrates how the Tsfasman-Vladut
theorem allows us to give an improved upper bound for the mean exponent.
\end{rema}




\section{Refined Estimates Via Tsfasman-Vladuts}
 \label{exemples}
 
We want to  illustrate the previous section  with a  few examples where we have optimized
the quantity  $B(\L/\K)$ by employing the techniques of  Tsfasman and Vladut \cite{TV}.

\subsection{Tsfasman-Vladuts Machinery}
Let us fix an asymptotically exact extension $\LL:=\L/\K$.
Estimating the constant  $B(\L/\K)$ given by Theorem \ref{TV} is an interesting problem, involving certain kinds of optimization.
Indeed the quantity for which we would like to have a tight upper bound is the sum
$$\sum_q b_q \phi_q -b_0\phi_\Reel -b_1\phi_\Com$$
satisfying the three following conditions:
\begin{enumerate}
 \item[(i)]$\phi_q >0$ ;
 \item[(ii)] $\forall \ell, \  \sum_m m\phi_{\ell^m} \leq \phi_\Reel + 2 \phi_\Com$,
 \item[(iii)]$ \sum_q a_q \phi_q +a_0 \phi_\Reel 
+a_1 \phi_\Com \leq 1,$
\end{enumerate}
where $$b_q=\log\frac{q}{q-1} , \ \ \ a_q=\frac{\log q}{\sqrt{q}-1},$$
$$a_0= \log2\sqrt{2\pi}+\pi/4+\gamma/2, \ \ \ a_1= \log(8\pi) +\gamma,$$
$$b_0=\log 2, \ \ \ b_1=\log 2\pi.$$

One now replaces  each $\phi_q$ by a variables $x_q$ and the problem becomes a question of linear optimization.  For convenience, we put $x_0=\phi_\Reel$ and $x_1=\phi_\Com$.

\medskip

One studies the quantity \ $\sum_q b_q x_q -b_0x_0 -b_1x_1$ \ when  $x_0$ and $x_1$ are fixed ({\it i.e.} 
when for example one has a totally real tower or a totally complex tower). Similarly,  one can exploit knowledge of any finite place that is totally split in $\LL$.
One can also use some information coming from the base field $\K$:  typically if the base field
has no place of norm  $\ell$,  then $x_\ell$
would be fixed and equals to~$0$.

\medskip

Denote by $\Sigma=\{ q_1,\cdots, q_r\}$ a set of powers of prime numbers for which one fixes  $x_{q_i}$.
We want to give   an upper bound as small as possible of the quantity 
$$\sum_{q \notin \Sigma} b_q x_q,$$
with the conditions
$$(i)' \ x_q >0, \ \ \  (ii)' \ \sum_m m x_{q^m} \leq x_0+2x_1, \ \ \ (iii)' \ \sum_{q\notin \Sigma} a_q x_q \leq 1- \sum_{q \in \Sigma} a_q x_q.$$

As explained in \cite{TV}, there are two reductions: first, one can assume that  $x_{\ell^*}$ attains  the maximum for condition
$(ii)'$, where  $\ell^*$ is the smallest power of $\ell$ for which  $x_{\ell^*}\neq 0$; 
try to optimize inequality $(iii)'$ for the smallest powers  $\ell^*$.

Now let $\ell_0^{*}$ the smallest power   such that
$$\sum_{\ell^* < \ell^{*}_0} (x_0+2x_1-\varepsilon_{\ell^*})a_{\ell^*} \leq 1 - (a_0x_0+a_1x_1+ \sum_{q\in  \Sigma} a_{q} x_q),$$
where   $\varepsilon_{\ell^*} \leq x_0 +2x_1$ is a constraint of  $\ell$ related to the base field.

Let  $\alpha \in [0,1)$ such that 
$$\alpha(x_0+2x_1-\varepsilon_{\ell_0^*})a_{\ell_0^*}=1-a_0x_0-a_1x_1 -\sum_{q\in  \Sigma} a_{q} x_q-\sum_{\ell^* < \ell^{*}_0} (x_0+2x_1-\varepsilon_{\ell^*})a_{\ell^*}.$$

\begin{prop} One has:
 $$\sum_{q} b_q \phi_q \ \leq \ \sum_{q\in \Sigma}b_q x_q + \sum_{\ell^*< \ell^*_0} (x_0+2x_1-\varepsilon_{\ell^*})b_{\ell^*}+
 \alpha(x_0+2x_1-\varepsilon_{\ell_0^*})b_{\ell_0^*}.$$
\end{prop}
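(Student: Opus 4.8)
The plan is to read the assertion as the value of a \textbf{bounded-variable linear program} and to solve it by the fractional-knapsack (greedy) method. After replacing each $\phi_q$ by a nonnegative variable $x_q$ and setting $x_0=\phi_\Reel$, $x_1=\phi_\Com$, the task is to maximize $\sum_{q\notin\Sigma} b_q x_q$ subject to the per-prime capacity constraints $(ii)'$ and the single global budget $(iii)'$, with the quantities $x_q$ ($q\in\Sigma$), $x_0$, $x_1$ and $\varepsilon_{\ell^*}$ regarded as fixed data (coming from the base field $\K$ and from the places we have chosen to control). Since the genuine invariants $(\phi_q)$ of the tower constitute a feasible point of this program, any upper bound for the optimal value is \emph{a fortiori} an upper bound for $\sum_q b_q\phi_q$, which is exactly what the Proposition claims. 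Thus it suffices to show that the maximum is the three-term expression on the right, and I would produce that expression as the objective value of the greedy solution.

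First I would carry out the \emph{reduction to smallest powers}: for each prime $\ell$ an optimal solution may be supported on the smallest power $\ell^*=\ell^{m^*}$ with $x_{\ell^*}\neq 0$. The mechanism is an exchange. Given $m>m^*$ with $x_{\ell^m}>0$, decrease $x_{\ell^m}$ by $\Delta$ and increase $x_{\ell^*}$ by $(a_{\ell^m}/a_{\ell^*})\Delta$, which leaves the left-hand side of $(iii)'$ unchanged. Two elementary monotonicity facts make this a strict improvement preserving feasibility: the ratio $q\mapsto b_q/a_q$ is strictly decreasing (so the objective increases, since $\ell^*<\ell^m$), and $a_{\ell^m}/a_{\ell^*}=\frac{m}{m^*}\cdot\frac{\ell^{m^*/2}-1}{\ell^{m/2}-1}<\frac{m}{m^*}$ (so the capacity constraint $(ii)'$ for $\ell$ is only relaxed). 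Iterating removes all higher powers, and then saturating $(ii)'$ brings $x_{\ell^*}$ to its maximal admissible value $M_{\ell^*}=x_0+2x_1-\varepsilon_{\ell^*}$, where $\varepsilon_{\ell^*}$ records the contribution forced by the base field.

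Next I would solve the reduced problem, which has one variable $x_{\ell^*}$ per prime, capped at $M_{\ell^*}$, under the single budget $\sum_{\ell^*} a_{\ell^*}x_{\ell^*}\le 1-\big(a_0x_0+a_1x_1+\sum_{q\in\Sigma}a_q x_q\big)$. This is precisely a fractional knapsack, and because $q\mapsto b_q/a_q$ is decreasing the items of highest value-per-unit-cost are the smallest prime powers; the greedy solution therefore fills the $x_{\ell^*}$ in increasing order of $\ell^*$, each to its cap $M_{\ell^*}$, until the budget is spent. The index $\ell_0^*$ of the text is exactly the first prime power at which the budget runs out, and $\alpha\in[0,1)$ is the fraction of $M_{\ell_0^*}$ the residual budget can afford; substituting this solution into $\sum_{\ell^*}b_{\ell^*}x_{\ell^*}$ yields precisely the right-hand side of the Proposition. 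That greedy is optimal (hence $\sum_q b_q\phi_q$, being the value at a feasible point, is $\le$ this) follows from the standard exchange argument, or equivalently by exhibiting the matching dual multiplier and invoking weak duality.

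The main obstacle is concentrated entirely in the two monotonicity inequalities underpinning the reductions, namely that $q\mapsto b_q/a_q=(\sqrt q-1)\log\!\big(q/(q-1)\big)/\log q$ is strictly decreasing over prime powers, and that $a_{\ell^m}/a_{\ell^*}<m/m^*$; once these are secured, the LP is solved by inspection and the bookkeeping of $\ell_0^*$ and $\alpha$ is routine. The second is immediate from the computation above. For the first I would argue by a direct estimate, using $\log\!\big(q/(q-1)\big)\sim 1/q$ against the $\sqrt q\,\log q$ growth of $1/a_q$, while checking by hand the smallest prime powers $q=2,3,4,5$ (where the asymptotics are least sharp) to confirm that the ordering of the ratios $b_q/a_q$ coincides with the ordering by size, so that ``greedy by value-to-cost'' agrees with ``increasing $\ell^*$'' as the statement requires.
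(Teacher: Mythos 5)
Your proposal is correct and takes essentially the same route as the paper: the Proposition is stated there as a direct consequence of the two Tsfasman--Vladut reductions (support each prime $\ell$ on its smallest admissible power $\ell^*$, then saturate the budget $(iii)'$ greedily in increasing order of $\ell^*$ up to the caps $x_0+2x_1-\varepsilon_{\ell^*}$, with the fractional term $\alpha$ at $\ell_0^*$), which is exactly the bounded-variable linear program / fractional-knapsack argument you formalize. Your write-up merely makes explicit the two monotonicity facts (that $q\mapsto b_q/a_q$ is decreasing and that $a_{\ell^m}/a_{\ell^*}<m/m^*$) which the paper, deferring to Tsfasman--Vladut, leaves implicit.
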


\medskip

\subsection{Strategy for Construction of Examples}

Below we will study some examples built with the following strategy. First $p=2$. Let $\k/\Q$ be a real quadratic field. Suppose that 
for the set $T$ of places of $\Q$, the $2$-tower $\k_\emptyset^T/\k$ is infinite (for doing this, we apply Corollary \ref{critere}).
Consider then $\K:=\k(\sqrt{-D})$, where $D=\prod_{\p\in T}\p$; put  $\LL:=\K\k_\emptyset^T$. Take an exhaustive tower $(\k_n)_n$ of $\k_\emptyset^T/\k$, then
$\K_n:=\k_n\K$ is an exhaustive tower of $\LL$. Moreover, $(\K_n)$ is a subtower of $\K_\emptyset^T$.
And then, by corollary \ref{corollaire2.16} one obtains  bounds for $ \Msup(\LL)$ and $\Minf(\K_\emptyset^\emptyset/\K)$.

\medskip

\subsection{Examples} In all of the examples below, we fix $p=2$, since in this case, we can employ ramification at infinity in conjunction with the genus theory bounds.

\begin{exem}
Let  $\k=\Q(\sqrt{8 \cdot 5 \cdot 7 \cdot 11 \cdot 13 \cdot 17 \cdot 19 \cdot 23})$.
Thanks to Corollary \ref{critere},  the number field $\k$ has an infinite $2$-extension $\k^T/\k$  ($S=\emptyset$)  
where $T=\{\ell_9\}$ 
is the set containing the only  place  above
 $3$ (of norm $9$).
Put $\K=\k(\sqrt{-3})$.
Denote by $(\k_n)_n$ a  tower of $\k^T$; put $\K_n=\K\k_n$ and $\displaystyle{\L=\bigcup_n \K_n}$ and $\L/\K:= \LL$. 
  Then by Genus
Theory (cf Theorem 
\ref{genre}) along $\k^T/\k$, one obtains that $$d_n = d_2 \A(\K_n) \geq  [\K_n:\K] -1.$$

If we apply Corollary \ref{corollaire2.16}, we find  $$\Minf(\K_\emptyset^\emptyset/\K) \leq \Msup(\LL) \leq \frac{1}{22 \cdot \log 2}\left( B \log \sqrt{|\d(\K)|} -(\gamma + \log 2)\right)
\approx  30.683 \cdots $$
where here one has taken $B\approx 1.0938$.
But we can do better by applying the refined results of Tsfasman-Vladut. The base field  $\K$ is of degree  $4$ over $\Q$.
 The tower we consider is totally complex  and by construction the prime  $\ell^*=9$ (over $3$ with norm $9$)  
 splits completely in the considered tower.
 Here $g_\K = \log(\sqrt{8\cdot 3 \cdot 5 \cdot 7 \cdot 11 \cdot 13 \cdot 17 \cdot 19 \cdot 23})$.
 In order of increasing size of the norm, one has ideals of norm: $4$, $7$, $7$,
 $9$,  $13, 13,19, 19$, $25$, $31, 37,43, 43, 43, 43$ etc.
 
 \
 
 One fixes the following  conditions $x_0=0$, $x_1=r_2/g=2/g$, $x_2=0$, $x_3=0$, $x_5=0$, $x_9=1/g=x_1/2$.
 One considers  $\Sigma=\{9\}$. Moreover  $x_4 \leq 1/g = x_1/2$,  $\varepsilon_{2^*}=x_1$ and $x_{25} \leq 1/g$.
 One has $$ \begin{array}{c}\displaystyle{ g - 2(\gamma + \log(8\pi)) -\a{9} -2 \left( \a{7}+\a{13}+\a{19}\right)}\\ 
 \displaystyle{ - \left(\a{4}+\a{25}\right)-4\a{31} \ < \ \a{37},}\end{array}$$
 and then $\ell_0^*=37$.
 One obtains
 $$\begin{array}{c}\displaystyle{ B(\L/\K) \leq 1 - \frac{r_2}{g}\log 2\pi +  \frac{1}{g} \left( \log(4/3)+  \log(9/8) + \log(25/24) \right. }\\ \displaystyle{\left.
 + 2\log(7/6)+2\log(13/12)+2\log(19/18)+4\log(31/30) + 4 \alpha \log(37/36)\right),}
 \end{array}$$
 where $$\begin{array}{rl} \displaystyle{ 4 \alpha \a{43} = }&\displaystyle{g-2(\log 8\pi +\gamma) -\frac{\log 9}{2}-\log 4 -\frac{\log 25}{\sqrt{25}-1}}
 \\ &\displaystyle{-2 \left(\frac{\log 7}{\sqrt{7}-1}+
 \frac{\log 13}{\sqrt{13}-1} + 
 \frac{\log 19}{\sqrt{19}-1}  +2 \frac{\log 31}{\sqrt{31}-1}\right)}.
 \end{array}$$
 and then $B(\L/\K)\approx 0.878 \cdots $, and 
 $$\Minf(\K_\emptyset^\emptyset/\K)  \leq \Msup(\LL) \leq 24.100.$$
\end{exem}

\begin{exem}
Let $\k$ be the real quadratic field of discriminant $D$ where $D$ is the the product of the elements in the set $$U=\{47, 59,  61 , 67 , 71 ,  73 ,  79 ,  83 ,  89 ,  97 ,  101 ,  103 ,  107
 ,  109 ,  113 ,  127 ,  131,  137 ,  139 ,  149,  151\}.$$
Let $T_{\mathrm{in}}=\{3,7,29,31,37,41,43,53\}$, $T_{\mathrm{dec}}=\{2,5,11,13,17,19,23\} $; put  $T=T_{\mathrm{in}}\cup T_{\mathrm{dec}}$; $|T|=22$.
The places of  $T_{\mathrm{in}}$ are inert in $\k/\Q$ and the places of  $T_{\mathrm{dec}}$ are totally decomposed in $\k/\Q$.
One uses Corollay \ref{critere}: the number field  $\k$ has an infinite  $T$-split $2$-tower $\k^T/\k$.
Consider now the number field $\K=\k(\sqrt{-D})$, where $D= \prod_{\ell \in T} \ell$ and put $\L=\K\k^T$.
Then for all number fields $\K_n$ along  $\L/\K$, one has  $$d_2 \Cl_{\K_n} \geq 22 [\K_n:\K]-1.$$
Then  $$\Mo(\K_n)\leq \frac{1}{22 \log 2} \cdot \left( B  \log \sqrt{|d_\K|} - (\gamma + \log 2)\right)  \approx 9.662 \cdots $$

We now use the stategy of Tsfasman and  Vladut to optimize $B(\L/\K)$.
Each place of $T$ splits totally in $\L/\K$: the associated parameters $\phi_{\ell^*}$ are then fixed.
More precisely, for every  $\ell \in T_{\mathrm{in}}$, we have $\phi_{\ell}=0$, $\phi_{\ell^2}=1/g$ and $\phi_{\ell^i}=0$ for $i>2$; for 
$\ell \in T_{\mathrm{dec}}$, one fixes 
$\phi_{\ell}=2/g$ and $\phi_{\ell^i}=0$ for $i>1$.
Moreover for $\ell \leq 150$, $\phi_{\ell^*} \leq 2/g$. In fact one may be more precise: only the primes of  $R=\{47,49,61,103,113,127,131,139\}$
split (and ramify) the others are inert (with $67^2$ the smallest norm).
One remarks that the sum 
$$A= g-2(\gamma +\log 8\pi) - 2\sum_{\ell \in T_{\mathrm{dec}}}\frac{\log \ell}{\sqrt{\ell}-1} - 
\sum_{\ell \in T_{\mathrm{in}}} \frac{\log \ell^2}{\ell-1} -2\sum_{\ell \in R}\frac{\log \ell}{\sqrt{\ell}-1} \approx 103.774 $$
is smaller than 
 $\displaystyle{ 4 \sum_{\ell \leq 67^2}^* \frac{\log \ell}{\sqrt{\ell}-1} }$
where the last sum is taken over the splitting places  in $\K/\Q$ ({\it i.e.}  $127$ such places).
One finds $\ell_0^*=3877$ and to finish $$A
-4 \  \sum_{153 \leq \ell < 3877}^* \frac{\log \ell}{\sqrt{\ell}-1} 
\approx 0.528 .$$
Here $\alpha \approx 0.980$

After making the  computation of the default, one obtains $$\sum_q b_q\phi_q \leq  3.348$$
and then $B(\L/\K) \leq 1.01421 \cdots $ and  $$\Minf(\K_\emptyset^\emptyset/\K) \leq \Msup(\LL) \leq \frac{1}{\log 2} 6.306 \cdots \approx 9.098\cdots $$
\end{exem}

\begin{exem}
 Let $$\k=\Q(\sqrt{8\cdot 3 \cdot 5 \cdot 7 \cdot 11 \cdot 13 \cdot 17 \cdot 19 \cdot 23 \cdot 29 \cdot  31 \cdot 37 \cdot
 41 \cdot 43\cdot 47 \cdot 53 }).$$ 
 Let $T_{\mathrm{in}}=\{ 71,79,83,97,101\}$ et $T_{\mathrm{dec}}=\{59,61,67,73\}$; $T=T_{\mathrm{in}}\cup T_{\mathrm{dec}}$; $|T|=13$.
 Put  $\K=\k(\sqrt{-59\cdot 61 \cdot 67 \cdot 71\cdot 73 \cdot 79 \cdot 83\cdot 97 \cdot 101})$.
 The number field  $\k$ has an infinite  $2$-tower $\k^T$; put $\L=\K\k^T$. Along the extension $\L/\K$, one has 
 $$d_2 \A(\K_n) \geq 13 [\K_n:\K]-1.$$
 By looking at the primes $\ell \leq 100$, one see that $$x_2=x_3=x_{7}=x_{19}=x_{29}=x_{31}=x_{41}=x_{47}=x_{53} =0$$
 Here $\ell_0^*=1249$ and so there are $47$  primes that are splitting in $\K/\Q$ and with norm
  less than  $\ell_0^*$.
 One find  $\alpha \approx 1.020$,
 $$\sum_q b_q\phi_q  \leq 2.192 \cdots $$
 and $B(\L/\K) \leq 0.951 \cdots$
To conclude,  $$\Minf(\K_\emptyset^\emptyset/\K) \leq \Msup(\LL) \leq \frac{1}{\log 2} 6.139 \cdots \approx 8.857 \cdots  $$
\end{exem}

\begin{exem} Take $p=2$.
 Let $\k=\Q(\sqrt{2\cdot 3 \cdot 5 \cdot 7 \cdot 11 \cdot 13 \cdot 17  \cdot 19 \cdot 23 \cdot 29 \cdot 31 \cdot 41 \cdot 43 })$.
Put $T_{\mathrm{dec}}=\{59,61\}$ and $T_{\mathrm{in}}=\{37,47,53,67,89\}$; $|T|=9$.
Let us consider $\K=\k(\sqrt{-37 \cdot 47 \cdot 53 \cdot 59 \cdot 61 \cdot 67 \cdot 89})$. Along the extension $\L/\K$, one has 
 $$d_2 \A(\K_n) \geq 9 [\K_n:\K]-1.$$
Here $$x_2=x_3=x_7=x_{13}=x_{31}=x_{37}=x_{47}=0,$$  $\ell_0^*=647$ and $\alpha \approx 0.072$.
Then  $\sum_q b_q \phi_q \leq 1.993 \cdots$, $B(\L/\K) \leq 0.9733 \cdots $ and 
$\Minf(\K_\emptyset^\emptyset/\K) \leq \Msup(\LL) \leq 9.657 \cdots$.
 \end{exem}

\begin{exem}
 
Take $p=2$. Let
$$\k= \Q(\sqrt{8\cdot 3 \cdot 5 \cdot 7\cdot11\cdot13\cdot17\cdot19\cdot23\cdot29\cdot31\cdot37\cdot41\cdot43\cdot47\cdot53\cdot59\cdot61\cdot67\cdot71\cdot73})$$
Put $T_{\mathrm{dec}}=\{79,83,89,97,107,109,137\}$, $T_{\mathrm{in}}=\{101,103,113,127,131,149,157,173\}$.
Let $D$ be the product of the elements in $T_{\mathrm{dec}}$ and $T_{\mathrm{in}}$ and let
$\K=\k(\sqrt{D})$.
Here $d_2 \A(\K_n) \geq 20[\K_n:\K]-1$. Finally, for this example, 
 $\ell_0^*=1069$, $B(\LL) \leq 1.013\cdots $ and thus $$\Minf(\K_\emptyset^\emptyset/\K) \leq \Msup(\LL) \leq 10.022 \cdots$$
\end{exem}

\section{Linear growth of the $p$-class rank}

\subsection{The mean $\Mo$ and a question of Ihara}
The examples of the previous section show how primes that split completely can be used to produce towers with linear growth for
the $p$-rank of the class group, which then places constraints on
the asymptotic mean  $\Minf$.
In particular, with the help of Proposition \ref{proposition1.2}, we have the following result.

\begin{prop} \label{propositionsplit}
 Let  $S$ and $T$ be two sets of places  of $\K$, $(S,p)=1$.
 For all subfields  $\K_n$ of $\K_S^T$, one has  $$d_p \A_T(\K_n) \geq [\K_n:\K]\Big(|T|-(r_1(\K)+r_2(\K))\Big).$$
Note that by the Golod-Shafarevich criterion (see Theorem \ref{GS} and Proposition \ref{boundrelations}), $K_S^T/K$ is infinite once  $|S|$ is large as compared to  $|T|$, 
 and in this case
 $$\Minf(\K_S^T/\K,T,\emptyset) \leq \frac{1}{|T|-(r_1(\K)+r_2(\K))} \left(\frac{\CC(B(\K_S^T/\K),\K,S)}{\log p} + a(T) \right),$$
 where $a(T)$ is given in Definition \ref{ap} and \ref{aS}.
\end{prop}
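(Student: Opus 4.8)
The plan is to establish the two assertions in turn: first the $p$-rank lower bound, which is a direct application of the class-field-theoretic formula of Proposition \ref{proposition1.2}, and then the bound on $\Minf$, which follows by feeding that rank estimate into Theorem \ref{maintheorem-1}.

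For the rank bound I would fix a subfield $\K_n \subseteq \K_S^T$ and apply Proposition \ref{proposition1.2} over the base field $\K_n$, taking the ramification set to be $T(\K_n)$ (the primes of $\K_n$ above $T$) and the split set to be empty, so that $\A_T(\K_n) = \A_{T(\K_n)}^\emptyset(\K_n)$; this uses $(T,p)=1$, which is what guarantees finiteness. The formula then reads
$$d_p \A_T(\K_n) \geq |T(\K_n)| - \big(r_1(\K_n) + r_2(\K_n) - \delta_{\K_n}\big).$$
The crucial input is that every place of $T$ splits completely in $\K_S^T/\K$ by the very definition of the split set, hence also in $\K_n/\K$; therefore $|T(\K_n)| = |T|\,[\K_n:\K]$. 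On the archimedean side, each place of $\K$ has at most $[\K_n:\K]$ extensions to $\K_n$, so $r_1(\K_n) + r_2(\K_n) \leq (r_1(\K)+r_2(\K))[\K_n:\K]$; since $\delta_{\K_n}\geq 0$ only improves the estimate, dropping it and combining these bounds yields $d_p \A_T(\K_n) \geq [\K_n:\K]\big(|T| - (r_1(\K)+r_2(\K))\big)$, as claimed.

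For the second assertion I would first note that $\K_S^T \subseteq \K_S^\emptyset$, so any tower inside $\K_S^T$ lies inside $\K_S^\emptyset$ and the hypotheses of Proposition \ref{proposition2.15} are satisfied with ambient ramification set $S$. Assuming $|T| - (r_1(\K)+r_2(\K)) > 0$ (else the stated bound is vacuous), the rank inequality just proved shows that the $p$-rank $d_n = d_p\A_T^\emptyset(\K_n)$ grows $\varepsilon$-linearly with $\varepsilon = |T| - (r_1(\K)+r_2(\K))$. I would then pick a subtower $(\K_n)$ cofinal in $\K_S^T$ to which Theorem \ref{TV} applies, for instance a chain of fixed fields of a cofinal family of open normal subgroups so that the tower is Galois over $\K$ and no appeal to GRH is needed; its limit is then $\K_S^T$, whence $B(\LL) = B(\K_S^T/\K)$. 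Applying Theorem \ref{maintheorem-1} with $\Sigma = T$ playing the role of the ramification set of the class-group object and empty split set produces exactly
$$\Minf(\K_S^T/\K, T, \emptyset) \leq \frac{1}{\varepsilon}\left(\frac{\CC(B(\K_S^T/\K),\K,S)}{\log p} + a(T)\right),$$
the term $a(\Sigma)=a(T)$ being the tame-conductor correction furnished by Lemma \ref{cardinal}.

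The step that I expect to require the most care is the bookkeeping around the double role of the set $T$: it is the split set for the tower $\K_S^T$, which is what forces the primes above $T$ to multiply in number and drives the linear rank growth, while it simultaneously plays the role of the ramification set $\Sigma$ for the class-group object $\A_T^\emptyset$ in the application of Theorem \ref{maintheorem-1}. Keeping this identification straight, together with verifying that $(T,p)=1$ so that $\A_T^\emptyset(\K_n)$ is finite and that a cofinal tower meeting the hypotheses of Theorem \ref{TV} is available, is where the genuine content lies; once the rank lower bound and Theorem \ref{maintheorem-1} are in hand, the remaining estimates are routine.
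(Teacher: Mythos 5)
Your proposal is correct and follows essentially the same route as the paper: the paper's proof is the one-line remark that Proposition \ref{propositionsplit} is ``an application of Theorem \ref{maintheorem-1} with $\varepsilon=|T|-(r_1(\K)+r_2(\K))$'', with the rank inequality coming from Proposition \ref{proposition1.2} exactly as you derive it (complete splitting of $T$ in $\K_S^T$ giving $|T(\K_n)|=|T|[\K_n:\K]$, plus the trivial archimedean bound). Your additional care about $(T,p)=1$, the positivity of $\varepsilon$, and choosing a Galois cofinal subtower so that Theorem \ref{TV} applies unconditionally just makes explicit what the paper leaves implicit.
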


\begin{proof}
 It is an application of Theorem \ref{maintheorem-1} with $\varepsilon=|T|-(r_1(\K)+r_2(\K))$.
\end{proof}

\medskip

At this point, let us recall a question of Ihara \cite{Ihara}:
\begin{Question}
What can one say about the number of primes that decompose completely in an infinite unramified Galois extension?
\end{Question}

The importance of the above question for the invariant $\Mo$ is illustrated in the following Corollary.

 \begin{coro}
  Suppose that  in the  pro-$p$-extension  $\K_S/\K$, with $(S,p)=1$,  the set $\T$ of places that split completely 
  in this tower is infinite. Then for all $\varepsilon>0$, by taking  large $T \subset \T$ , one obtains
  $$1\leq \Minf(\K_S/\K,T,\emptyset) \leq \frac{a(T)}{|T|} + \varepsilon.$$ 
  If moreover the set $\T$ contains infinitely many primes $\p$ with $a(\p)=1$ then, by choosing $T$ to consist only of such primes, we can arrange $\Minf(\K_S/\K,T,\emptyset)$  to be as close to $1$ as desired.
 \end{coro}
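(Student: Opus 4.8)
The plan is to derive this Corollary as a direct specialization of Proposition \ref{propositionsplit} to the setting where $S$ is chosen so that $\K_S/\K$ is infinite and where $T$ is a large finite subset of the infinite set $\T$ of totally split primes. First I would recall the upper bound from Proposition \ref{propositionsplit}, namely
$$\Minf(\K_S^T/\K,T,\emptyset) \leq \frac{1}{|T|-(r_1(\K)+r_2(\K))} \left(\frac{\CC(B(\K_S^T/\K),\K,S)}{\log p} + a(T) \right),$$
and observe that since the primes of $T$ split completely, the splitting behavior (and hence the relevant local contributions) is under control, so that the Tsfasman-Vladut constant $B$ and the discriminant term remain bounded uniformly as $T$ grows. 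The lower bound $1 \leq \Minf$ is immediate from the general remark in the introduction that $\Mo_{A} \geq 1$ for any nontrivial finite $p$-group, combined with the fact that the groups in question are nontrivial along the tower.

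The heart of the argument is an asymptotic analysis as $|T| \to \infty$. First I would isolate the term $\CC(B(\K_S^T/\K),\K,S)/\log p$, which by Definition \ref{CC} involves $B \log\sqrt{\d(\K,S)}$ and fixed archimedean contributions depending only on $\K$; crucially this quantity does \emph{not} grow with $|T|$ once $S$ and $\K$ are fixed. Therefore, dividing by $|T|-(r_1(\K)+r_2(\K))$, this term tends to $0$ as $|T| \to \infty$. The remaining piece is $a(T)/(|T|-(r_1+r_2))$, which I would compare to $a(T)/|T|$: since $r_1,r_2$ are fixed constants and $|T|\to\infty$, the denominators $|T|-(r_1+r_2)$ and $|T|$ are asymptotically equal, so the two ratios differ by a quantity tending to $0$. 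Combining these observations, for any prescribed $\varepsilon > 0$ one can choose $T \subset \T$ large enough that the total bound is at most $a(T)/|T| + \varepsilon$, establishing the first inequality.

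For the refined final claim, I would specialize further: if $\T$ contains infinitely many primes $\p$ with $a(\p)=1$, choose $T$ to consist only of such primes. Then by Definition \ref{aS} we have $a(T) = \sum_{\p \in T} a^*(\p)$; here I must take care with the $p=2$ subtlety, but the remark following Definition \ref{aS} explicitly notes that for a prime splitting completely one may take $a^*(\p) = a(\p)$, so with $a(\p)=1$ for all $\p \in T$ we get $a(T) = |T|$ and hence $a(T)/|T| = 1$. Feeding this into the previous bound yields $\Minf(\K_S/\K,T,\emptyset) \leq 1 + \varepsilon$, which together with the lower bound $1 \leq \Minf$ shows the invariant can be made as close to $1$ as desired.

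The main obstacle I anticipate is verifying that the bound on $B(\K_S^T/\K)$ and the quantity $\CC(B,\K,S)$ genuinely remain bounded independently of $|T|$, rather than degrading as $T$ grows; this hinges on the fact that enlarging $T$ only imposes the \emph{splitting} condition on additional primes (which by construction lie over a fixed $S$-ramified tower) and does not enlarge the ramification set $S$ or the base discriminant $\d(\K,S)$. One should confirm that the towers realizing $\Minf(\K_S/\K, T, \emptyset)$ can be taken inside $\K_S/\K$ with the root discriminant controlled by $S$ alone via Proposition \ref{rootdiscriminantborne}, so that the hypotheses of Theorem \ref{maintheorem-1} (in particular the $\varepsilon$-linear growth of the rank, here with $\varepsilon = |T|-(r_1+r_2)$) are met uniformly. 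Once this uniformity is in hand, the limiting argument is routine.
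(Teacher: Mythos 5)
Your proposal is correct and is essentially the paper's own argument: the corollary appears there without proof, as an immediate specialization of Proposition \ref{propositionsplit}, resting on exactly the points you identify — since every prime of $T\subset\T$ splits completely in $\K_S/\K$ one has $\K_S^T=\K_S$, the term $\CC(B(\K_S/\K),\K,S)$ is independent of $T$ and is killed after division by $|T|-(r_1(\K)+r_2(\K))$, and for the final claim the remark following Definition \ref{aS} lets you take $a^*(\p)=a(\p)=1$, so that $a(T)=|T|$ and the bound tends to $1$. The only caveat (inherited from the paper's informal statement, not introduced by you) is that replacing the denominator $|T|-(r_1+r_2)$ by $|T|$ costs exactly $\frac{a(T)}{|T|}\cdot\frac{r_1+r_2}{|T|-(r_1+r_2)}$, which tends to $0$ only when $a(T)=o(|T|^2)$; this is automatic in the final claim, and holds under any choice of $T$ with bounded average of the $a(\p)$, which is the intended reading of ``taking large $T$''.
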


\subsection{Ershov's trick} 
 
Thanks to a result of Schmidt \cite{schmidt}, the phenomenon of  Proposition \ref{propositionsplit} which we derived from number theory considerations, can be obtained via a clever idea due to Ershov \cite{Ershov} using
pro-$p$-group presentations.

\medskip

Let $\K$ be a number field and $S_0$ a finite set of places of $\K$, $(S_0,p)=1$. We assume that $\delta_\K=0$ and that $\A_\K$ is trivial.
By    \cite{schmidt}, one can choose a finite set $\Sigma$  of places of $\K$ such that 

\begin{enumerate}
\item[(i)] $(\Sigma,p)=1$,  $S_0\subset \Sigma$;
 \item[(ii)] The natural map $\displaystyle{H^2(\G_\Sigma,\fq_p) \stackrel{\sim}{\longrightarrow} \bigoplus_{v \in \Sigma} H^2(\G_v,\fq_p)}$ is an isomorphism;
 \item[(iii)] the pro-$p$-group $\G_\Sigma$ is of cohomological dimension $2$ and 
 $$\chi(\G_\Sigma):= 1 -d_p H^1(\G_\Sigma,\fq_p)+d_p H^2(\G_\Sigma,\fq_p)=r_1(\K)+r_2(\K).$$
\end{enumerate}

\medskip

Put $d=d_p \G_\Sigma$ and  $k=|\Sigma|$. As $\A_\K$ is trivial, $d \leq k$.

 By  (ii)  the relations of $\G_\Sigma$ are all local. 
In fact, by following the proof of Theorem 6.1 of \cite{schmidt}, one can show that there exists a 
subset $S \subseteq \Sigma$ containing $S_0$ with the following property.  Letting $T=\Sigma - S$ and $t=|T|$, there exists a  basis of generators   $(x_i)$
of $\G_\Sigma$ such that
 for $i=1,\cdots, t$, every element $x_i$ is a generator of the inertia group in $\K_\Sigma/\K$ of one place of $T$.
  (The set $S$ allows us to kill a certain Shafarevich group.) The quantities $t$ and $d$ can be  as large as we want.

 \medskip
 
 Hence the group $\G_\Sigma$ can be described by generators and relations as
 $$\langle x_1,\cdots, x_d \ | \ [x_1,F_1]=x_1^{p\lambda_1}, \cdots, [x_t,F_t]=x_t^{p\lambda_t},\ r_{t+1}, \cdots, r_k \rangle,$$
 where the elements
 $F_i$ are lifts of the  Frobenius of the places $v_i \in S$, and $\lambda_i$ belongs to $\Z_p$
 (for $p=2$, $\lambda_i\in 2\Z_2$) and  where we recall that $k= d_p H^2(\G_\Sigma,\fq_p)=|\Sigma|$.
 Note that the relations $[x_i,F_i]x_i^{p\lambda_i},  i=1,\cdots, t$ are the local conditions. 
 
 \medskip
 
Then take a minimal presentation of $\G:=\G_\Sigma$ as follows:
 $$1\longrightarrow R \longrightarrow \F \longrightarrow \G \longrightarrow 1,$$
 where $R$ is the normal subgroup of $\F$ generated by the  relations
 $$\langle [x_1,F_1]=x_1^{p\lambda_1}, \cdots, [x_t,F_t]=x_t^{p\lambda_t}, \ r_{t+1}, \cdots, r_k \rangle.$$
 
 Let  $\H$ be the normal subgroup  of $\F$ generated by the  elements $x_1,\cdots, x_{t}, F_1,\cdots, F_t$. 
 By maximality, the subgroup $\H R$ corresponds to $\G_S^{T}$. 
 Put $\Gamma=\G_S^{T}$.
 
 \medskip
 
 Let now $\Gamma_i$ be an open subgroup of $\Gamma$ and let  $\F_i$ be the normal subgroup of $\F$ containing $\H R$ and satisfying  
 $\F/\F_i \simeq  \Gamma/\Gamma_i \simeq \G/\G_i$, 
 where $\G_i$ corresponds to
 $\F_i / R$.
 Now by  Schreier's formula one has 
 $$d_p \F_i -1=[\F:\F_i](d_p \G -1),$$
by recalling that  $d_p \G = d_p \F$.
 One then has the exact sequence 
 $$1\longrightarrow \F_i^p[\F_i,\F_i] R /\F_i^p[\F_i,\F_i] \longrightarrow \F_i/\F_i^p[\F_i,\F_i] \longrightarrow 
 \F_i/\F_i^p[\F_i,\F_i]R \longrightarrow 1,$$
 where  $\F_i/R \simeq \G_i$.
 Now, by construction, as $\F_i$ contains $\H$, the first generators of $R$ are in $\F_i^p[\F_i,\F_i]$.
 One see  very quicky that the quotient $\F_i^p[\F_i,\F_i] R /\F_i^p[\F_i,\F_i]$ is topologically generated by the elements of the form 
 $yzy^{-1}$, where $y $ is a representative of a class of $\F/\F_i$ and $ z \in \{ r_{t+1},\cdots, r_k\}$: indeed $R \subset \F_i$.
 Thus $$d_p(\G_i) \geq [\G:\G_i] (d - 1- k+t)+1,$$
 and as $1-d+k=\chi(\G_\Sigma)=r_1(\K)+r_2(\K) $,
 one obtains
 $$\frac{d_p(\G_i)}{[\G:\G_i]} \geq t-\big(r_1(\K)+r_2(\K)\big).$$
 Here $\G_i =\G_\Sigma(\K_i)$ where $\K_i$ is the fixed field of $\G_i$ inside the tower  $\K_\Sigma/\K$.

\subsection{On Schreier's  bound}
Recall again the principle behind the construction of the examples of the section \ref{exemples}. Take $p=2$.
Let $\k$ be a real quadratic field having an infinite $2$-extension $\k^T/\k$.
Put $t=|T|- (r_1+r_2)$. Let $\K/\k$ be an imaginary quadratic extension in which all places of $T$ are ramified.
Let $(\k_n)$ be an exhaustive tower in 
$\k^T/\k$ and consider the tower $(\K\k_n)$ of $\K$, which is evidently inside $\K^T/\K$.  By 
Genus Theory applied to each quadratic extension $\K_n/\k_n$, 
$ d_p \Cl_{\K_n} \geq  [\K_n:\K]t -1 $.
In \cite{Hajir}, it has been proven that in fact $$d_p \Cl_{\K_n} \geq  [\K_n:\K]t +1 .$$ 

At this level, we recall that Genus Theory allows us  a lower bound  of the $p$-rank of a subgroup of  $\Cl_{\K_n}$ without taking into account the contribution of 
 $\Cl_{\k_n}$ i.e. $$d_p \Cl_{\K_n} \geq [\K_n:\K]t -1 + \alpha_n,$$ with $\alpha_n \leq d_p \Cl_{\k_n}$ measuring the added contribution to the rank coming from the injection of  $\Cl_{\k_n}$ into
 $\Cl_{\K_n}$ (see \cite{Maire-Bx}).

\medskip

In the other direction, thanks to Schreier's inequality, one has 
$$d_p \Cl_{\K_n} \leq (d_p \Cl_\K-1)[\K_n:\K]+1,$$
and then $$t\  [\K_n:\K] \leq d_p\Cl_{\K_n} -1  \leq (d_p \Cl_\K-1)[\K_n:\K],$$
which naturally raises the following question raised in \cite{Hajir}.

\begin{Question}
 Is it possible to create an example as above having an optimal inequality, {\it i.e.} such that  $d_p \Cl_\K - 1= t$~?
\end{Question}

\medskip

In \cite{Hajir}, it was shown that a sequence of examples with the ratio $(d_p \Cl_\K -1)/t$ tending to $1$ can be created.  In the remainder of this section,  we will make an attempt to find examples with small $(d_p \Cl_\K - 1) - t$  by considering some ray class groups.  

\

We take $p=2$.
To recall a Theorem due to Gras-Munnier (see \cite{gras}, section I.4 or chapter VI  or \cite{Gras-Munnier}), we fix the notation.
Let   $\F':=\F(\sqrt{E},\sqrt{A})$ be the governing field of a number field $F$ where $E$ is the group of units of $\F$, 
where $A=\{a_1,\cdots, a_d\}$, ${\mathcal A_i}^2=a_i \O_\F$, $({\mathcal A_i})_i$ being a system of generator of $\A_\F[2]$.

\begin{theo}[Gras-Munnier] \label{GM}
Let $T=\{\p_1,\cdots, \p_t\}$ be a set of places of $\K$, with $\N \p_i \equiv 1 \bmod p$. There exists an extension $\L/\F$
cyclic of degree $2$, exactly and totally ramified at $T$  if and only if, for $i=1,\cdots, t$, there exists $a_i \in \fq_p^\times$, such that
$$\prod_{i=1}^t \FF{\F'/\F}{\P_i}^{a_i} =1 \ \in \Gal(\F'/\F),$$
where $\P_i$ is an ideal  of $L$ above $\p_i$.
\end{theo}

\medskip

Now, take $\ell$ to be a prime with $\ell \equiv 1 \bmod 32$.
Let $\F$ be the totally real subfield of $\Q(\zeta_\ell)$ of degree $16$ over $\Q$. Let $\{-1,\varepsilon_1,\cdots,\varepsilon_r\}$ be a basis of $E/E^2$. Note that the extension $\F'/\Q$ is a Galois extension and contains $\F$ (here $\F'$ is the governing field defined above). 
By the Chebotarev Density Theorem, we can find an odd prime $q$ that splits completely in $\F'/\Q$. Now by Theorem \ref{GM}, for all  primes $\q_i$  of $\F$ above $q$,
    there exists a cylic $2$-extension
   exactly $\{\q_i\}$-ramified.  We conclude that the $2$-rank of  the $2$-class 
group $\A_S(\K)$ is at least $16$, where $S$ is the set of places of $\K$ above $q$. 
 Moreover by the condition above $q$, one has that $-1$ is  a square in $\Q_q$, that means that $q\equiv 1 \bmod 4$.
 Now, again by applying Chebotarev Density Theorem take $p_1$ that splits completely in the  extension $\F_S^{ab}(\sqrt{-1})/\Q$ as well as another prime $p_2$  
that splits completely in $\F_S/\Q$ but which is  inert in $\Q(\sqrt{-1})/\Q$. 

\

Let  $T$ be the set of places of $\F$ above $\{p_1,p_2\}$. 
Then the $2$-rank of $\G_S:=\Gal(\F_S/\F)$ and the $2$-rank of $\G_S^T:=\Gal(\F_S^T/\F)$ are the same and are at least $16$. Now, $r(\GST) \leq 48$ (see Proposition \ref{boundrelations}) and, by the Theorem of Golod-Shafarevich (see Theorem \ref{TV})  the tower $\F_S^T/\F$ is infinite and then  the tower
 $\Q_\Sigma^T/\Q$ is  infinite too, where $\Sigma=\{q,\ell\}$.
 
 \
 
 Put $\K=\Q(\sqrt{-p_1p_2})$.  The primes $\ell$ and $q$ are split in $\K/\Q$.
As $p_2 \equiv 3 \bmod 4$, one has d$_2 \A_K = 1$  and the
  $2$-rank of the ray class group of $\K$ with modulus 
$q \ell$ is at most $5$.  Now consider the compositum $\L:=\Q_\Sigma^T \K$.  Thanks to Schreier's inequality and to Genus Theory, one has for all number fields $\K_n$ in $\L/\K$:
$$2[\K_n:\K] \leq d_2 \A_\Sigma(\K_n) -1 \leq 4[\K_n:\K].$$
   
   \
   
   By assuming a hypothesis, we can improve the above estimate. Indeed, the $2$-group  $\G:=\Gal(\F/\Q)$ acts on the elementary abelian $2$-group $\H:=\Gal(\F'/\F)$. Hence there exists a subgroup $\H_0$ of $\H$  of order $2$ on which $\G$ acts trivially. 
   
   \
   
   \emph{For the remainder of this section,} suppose that $\H_0$ can be chosen such that 
   $\H_0 \nsubseteq \Gal(\F'/\F(\sqrt{-1}))$.
   
   \

   By the Chebotarev Density Theorem, take an odd prime $q$ such that its Frobenius in $\Gal(\F'/\Q)$ is a generator of $\H_0$.
   \begin{lemm} 
Let $\q_i \neq \q_j$ be two primes of $\F$ above $q$. Then    $\displaystyle{\left(\frac{\F'/\F}{\q_i}\right)=\left(\frac{\F'/\F}{\q_j}\right)}$.
   \end{lemm}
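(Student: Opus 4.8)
The plan is to identify both Artin symbols with a single \emph{central} Frobenius element of the large Galois group $\Gal(\F'/\Q)$, so that equality becomes automatic. Write $G=\Gal(\F'/\Q)$; since $\F'/\Q$ is Galois and contains $\F$, the subgroup $\H=\Gal(\F'/\F)$ is normal in $G$ with $G/\H\cong \G=\Gal(\F/\Q)$. Because $\H$ is an elementary abelian $2$-group, conjugation makes $\H$ a $G$-module on which $\H$ itself acts trivially, so the conjugation action factors through $\G$; by the defining property of $\H_0$ this induced $\G$-action is trivial on $\H_0$. Let $\phi$ be the unique nontrivial element of $\H_0$, which by hypothesis is exactly (a generator of) the Frobenius class of $q$ in $G$.

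The key step I would establish first is that $\phi$ is \emph{central} in $G$. For $g\in\H$ this is immediate since $\H$ is abelian; for a general $g\in G$ the conjugate $g\phi g^{-1}$ is the image of $\phi$ under the $\G$-action of the class $g\H$, which fixes $\phi\in\H_0$. Hence $g\phi g^{-1}=\phi$ for every $g\in G$, so the conjugacy class of $\phi$ in $G$ reduces to the singleton $\{\phi\}$. In particular the Frobenius of $q$ is a well-defined element of $\H\subseteq G$, and since its image in $G/\H\cong\G$ is trivial, $q$ splits completely in $\F/\Q$, giving the primes $\q_1,\dots,\q_{16}$, all unramified in $\F'$.

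To conclude, note that for every prime $\P$ of $\F'$ above $q$ one has $\Frob_\P=\phi$, because these Frobenius elements form a single conjugacy class in $G$ and that class is $\{\phi\}$. Since $\F'/\F$ is abelian, the Artin symbol $\left(\frac{\F'/\F}{\q_i}\right)$ does not depend on the choice of prime of $\F'$ above $\q_i$ and equals $\Frob_\P=\phi$; thus $\left(\frac{\F'/\F}{\q_i}\right)=\phi=\left(\frac{\F'/\F}{\q_j}\right)$ for all $i,j$. The only point genuinely requiring care is the centrality of $\phi$: namely, that triviality of the $\G$-action on $\H_0$ together with commutativity of $\H$ upgrades to centrality in the full group $G$; everything else is the standard dictionary relating Frobenius elements upstairs to Artin symbols in an abelian subextension. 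I would also remark that the standing hypothesis $\H_0\nsubseteq\Gal(\F'/\F(\sqrt{-1}))$ is not needed for this particular statement—it serves only later, to control the splitting of $q$ in $\Q(\sqrt{-1})$.
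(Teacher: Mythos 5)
Your proof is correct and follows essentially the same route as the paper: both arguments come down to the fact that conjugation by $\Gal(\F'/\Q)$ acts on $\H$ through $\G$, which fixes $\H_0$ pointwise, so the Artin symbols (which lie in $\H_0$ by the Chebotarev choice of $q$) are conjugation-invariant. The paper states this as transporting the symbol between the $\G$-conjugate primes $\q_i$ and $\q_j$ via $\left(\frac{\F'/\F}{\q_i^g}\right)=g\cdot\left(\frac{\F'/\F}{\q_i}\right)\cdot g^{-1}$, whereas you package the identical fact as centrality of the Frobenius element $\phi$ in $\Gal(\F'/\Q)$ (so the Frobenius class collapses to a singleton); the difference is purely one of presentation, and your closing remark that the hypothesis $\H_0 \nsubseteq \Gal(\F'/\F(\sqrt{-1}))$ is not needed for this lemma is also accurate.
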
 
   \begin{proof}
   The primes $\q_i$ and $\q_j$ are conjugated: there exists $g\in \G$ such that $\q_j=\q_i^g$. We are done thanks to  the property of the Artin Symbol: $\displaystyle{\left(\frac{\F'/\F}{\q_i^g}\right)=g\cdot \left(\frac{\F'/\F}{\q_i}\right) \cdot g^{-1}}$ and the fact that $\G$ acts trivially on $\H_0$.
   \end{proof}
  
   Now by Theorem \ref{GM}, for all pairs of primes $\q_i\neq \q_j$  of $\F$ above $q$,
    there exists a cylic $2$-extension
   exactly $\{\q_i,\q_j\}$-ramified. Then, this implies that the $2$-rank of  the $2$-class 
group $\A_S(\K)$ is at least $15$, where $S$ is the set of places of $\K$ above $q$. 
Moreover by the condition above $q$, one has that $-1$ is not a square in $\Q_q$, that means that $q\equiv 3 \bmod 4$.
We now put $\K=\Q(\sqrt{-p_1p_2})$ and proceed exactly as before;  the
  $2$-rank of the ray class group of $\K$ with modulus 
$q \ell$ is at most $4$ if $q$ is inert in $\K/\Q$ or $5$ if $q$ splits. 

\begin{lemm}
Here, $d_2 \A_{\K,\Sigma} \leq 4$. 
\end{lemm}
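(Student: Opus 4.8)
The plan is to reduce everything to a single splitting assertion: that $q$ is \emph{inert} in $\K/\Q$. Indeed, the computation recorded just before the lemma already shows $d_2 \A_{\K,\Sigma} \le 4$ when $q$ is inert in $\K/\Q$ and only $d_2 \A_{\K,\Sigma} \le 5$ when $q$ splits, so the lemma follows the moment we rule out the split case. Since $\K = \Q(\sqrt{-p_1 p_2})$, inertia of $q$ is equivalent to $\left(\frac{-p_1 p_2}{q}\right) = -1$, and I would attack this through the factorisation $\left(\frac{-p_1 p_2}{q}\right) = \left(\frac{-1}{q}\right)\left(\frac{p_1}{q}\right)\left(\frac{p_2}{q}\right)$. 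The first factor equals $-1$ because $q \equiv 3 \bmod 4$, a fact already recorded above; so the whole task reduces to proving $\left(\frac{p_1}{q}\right) = \left(\frac{p_2}{q}\right) = +1$.

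The device that makes both of these tractable is the observation that $\Q(\sqrt{-q})$ sits inside $\F_S$. Since $q \equiv 3 \bmod 4$ we have $-q \equiv 1 \bmod 4$, so adjoining $\sqrt{-q}$ to $\F$ yields a quadratic extension $\F(\sqrt{-q})/\F$ that is unramified above $2$ and ramified only at the primes of $S$ (those above $q$); being an abelian $2$-extension of $\F$ unramified outside $S$, it lies in $\F_S^{ab}$, whence $\Q(\sqrt{-q}) \subseteq \F_S^{ab} \subseteq \F_S$. I would then feed in the defining splitting conditions on the auxiliary primes: $p_1$ splits completely in $\F_S^{ab}(\sqrt{-1})$ and $p_2$ splits completely in $\F_S$, so both split completely in the subfield $\Q(\sqrt{-q})$, giving $\left(\frac{-q}{p_1}\right) = \left(\frac{-q}{p_2}\right) = 1$.

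It remains to transport these two symbols across the reciprocity law, watching the sign factors. Note first that $\Q(\sqrt{-1}) \subseteq \F_S^{ab}(\sqrt{-1})$, so $p_1$ splits in $\Q(\sqrt{-1})$ and hence $p_1 \equiv 1 \bmod 4$; reciprocity then gives $\left(\frac{p_1}{q}\right) = \left(\frac{q}{p_1}\right)$, while $1 = \left(\frac{-q}{p_1}\right) = \left(\frac{-1}{p_1}\right)\left(\frac{q}{p_1}\right) = \left(\frac{q}{p_1}\right)$, so $\left(\frac{p_1}{q}\right) = 1$. For $p_2 \equiv 3 \bmod 4$ (forced by $p_2$ being inert in $\Q(\sqrt{-1})/\Q$) together with $q \equiv 3 \bmod 4$, reciprocity gives $\left(\frac{p_2}{q}\right) = -\left(\frac{q}{p_2}\right)$, while $1 = \left(\frac{-q}{p_2}\right) = \left(\frac{-1}{p_2}\right)\left(\frac{q}{p_2}\right) = -\left(\frac{q}{p_2}\right)$ forces $\left(\frac{q}{p_2}\right) = -1$, hence $\left(\frac{p_2}{q}\right) = 1$. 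Multiplying the three factors gives $\left(\frac{-p_1 p_2}{q}\right) = (-1)(1)(1) = -1$, so $q$ is inert in $\K/\Q$ and therefore $d_2 \A_{\K,\Sigma} \le 4$.

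The step I expect to require the most care is the containment $\Q(\sqrt{-q}) \subseteq \F_S^{ab}$: one must verify that $\F(\sqrt{-q})/\F$ is genuinely unramified outside $S$, which is exactly where the congruence $-q \equiv 1 \bmod 4$ enters, to avoid ramification above $2$. After that the argument is purely a matter of keeping the quadratic-reciprocity signs straight so that both off-diagonal symbols land on $+1$; everything else is immediate from the splitting hypotheses on $p_1, p_2$ and the dichotomy established above.
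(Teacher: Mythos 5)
Your reduction to the inert case rests entirely on the containment $\Q(\sqrt{-q}) \subseteq \F_S^{ab}$, and this is exactly where the argument breaks down. You checked ramification at the finite places (the congruence $-q \equiv 1 \bmod 4$ does take care of the primes above $2$), but you overlooked the archimedean places: the paper's standing convention (see the Notation section) is that for $p=2$ the extensions $\K_S^T$ are required to keep all real places real. Here $\F$ is totally real while $\F(\sqrt{-q})$ is totally imaginary, so $\F(\sqrt{-q})/\F$ is ramified at every archimedean place and therefore does \emph{not} lie in $\F_S$, let alone in $\F_S^{ab}$. With that containment gone, the splitting hypotheses on $p_1,p_2$ give no information about $\left(\frac{-q}{p_1}\right)$ or $\left(\frac{-q}{p_2}\right)$: under the convention, the only quadratic field abelian over $\Q$ that can sit inside $\F_S^{ab}$ is $\Q(\sqrt{\ell})$, since $\Q(\sqrt{q})$ and $\Q(\sqrt{\ell q})$ are ramified at $2$ (because $q\equiv 3 \bmod 4$) and $\Q(\sqrt{-q})$, $\Q(\sqrt{-\ell q})$ are imaginary. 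So the symbols $\left(\frac{q}{p_1}\right)$, $\left(\frac{q}{p_2}\right)$ are simply not pinned down by the construction, and $q$ may perfectly well split in $\K/\Q$; your conclusion that $q$ is inert is not merely unproved, it can be false. (Contrast this with the earlier case treated in the paper, where $q\equiv 1 \bmod 4$: there $\Q(\sqrt{q})$ is real and unramified at $2$, hence \emph{does} lie in $\F_S^{ab}$, and this is precisely why the paper can assert in that case that $q$ splits in $\K/\Q$.)

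This is why the paper's own proof does not try to rule out the split case but deals with it directly: writing $C^2=(\alpha)$ for the nontrivial class $C$ of $\A_\K$, one has the formula $d_2 \A_{\K,\Sigma} = d_2 \A_\K + |\Sigma| - d_2 \Im(\theta)$, where $\theta$ maps $\langle -1,\alpha\rangle$ into the product of the residue fields modulo squares at the four primes of $\Sigma$. Since $q\equiv 3 \bmod 4$, the unit $-1$ is a nonsquare in the residue fields at the primes above $q$, so $\Im(\theta)$ has order at least $2$, giving $d_2 \A_{\K,\Sigma} \leq 1+4-1 = 4$. Your quadratic-reciprocity bookkeeping is correct as far as it goes, but the approach cannot be repaired within the paper's framework; the lemma genuinely requires an argument in the split case, and that is the content of the paper's proof.
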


\begin{proof} One has only to look at the case  where  $q$ splits in $\K/\Q$. Let $\alpha \in \K$ such that $\alpha$ is the square of the unique non trivial   class $C$ of $\A_\K$: $C^2=(\alpha)$.
Consider the morphism $$\theta : \langle-1,\alpha\rangle \mapsto \frac{\fq_{\l_1}^\times}{\fq_{\l_1}^{\times2}} \times \frac{\fq_{\l_2}^\times}{\fq_{\l_2}^{\times2}} \times \frac{\fq_{\q_1}^\times}{\fq_{\q_1}^{\times2}} \times \frac{\fq_{\q_2}^\times}{\fq_{\q_2}^{\times2}}, $$  where $\l_i$ and $\q_i$ are the primes of $\K$ above $q\ell$ and where $\fq_{\q_i}$ (resp. $\fq_{\l_i}$) is the residue field of $\q_i$ (resp. of $\l_i$).  
Then  one has the formula (see \cite{Maire-PMB} or see \cite{gras}):  $d_2 \A_{\K,\Sigma} = d_2 \A_\K + |\Sigma| - d_2 \Im(\theta)$. Now as $q\equiv -1 \bmod 4$, the image of $\theta$ is at least of order $2$ and then we have down.
\end{proof}

Now consider the compositum $\L:=\Q_\Sigma^T \K$.  Thanks to Schreier's inequality and to Genus Theory, one has for all number fields $\K_n$ in $\L/\K$:
$$2[\K_n:\K] \leq d_2 \A_\Sigma(\K_n) -1 \leq 3[\K_n:\K].$$

\section{Invariant factors in   pro-$p$-groups}

For this section the main reference is \cite{DSMN}.

\

We begin with a straightforward observation.

\begin{prop}
 Let $\G$ be a torsion-free \emph{FAb} pro-$p$-group. Let $(\U)$ be a basis of open  subgroups of $\G$. Then
  the sequence of the exponents  $e(\U^{ab})$ of $\U^{ab}$ is not bounded.
\end{prop}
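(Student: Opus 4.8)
The plan is to reduce the statement to a single procyclic subgroup and then to invoke the analytic lower bound. Since $\G$ is torsion-free and nontrivial it is infinite and contains an element $x\neq 1$ of infinite order; put $C:=\overline{\langle x\rangle}\cong\Z_p$. For each open $\U$ the intersection $C\cap\U$ is procyclic, $C\cap\U=\overline{\langle x^{p^{k(\U)}}\rangle}$ with $k(\U)\to\infty$ as $\U$ shrinks, and its image in $\U^{ab}$ is a cyclic group whose order I denote $p^{m(\U)}$. Because the exponent of $\U^{ab}$ is at least the order of any cyclic subgroup, $p^{m(\U)}\le e(\U^{ab})$. Hence it suffices to prove that the numbers $m(\U)$ are unbounded along the basis, i.e. that a fixed $\Z_p$-line admits images of unbounded order in the abelianizations of arbitrarily small open subgroups.

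Both hypotheses set the stage for this. \emph{FAb} makes each $\U^{ab}$ finite, so that $\overline{[\U,\U]}$ is open in $\U$ and the derived series consists of open subgroups; torsion-freeness supplies the line $C$ and guarantees $x^{p^j}\neq 1$ for all $j$. I should stress, however, where the real difficulty lies: the negation of what we want is $m(\U)\le N$ for some fixed $N$ and all basis $\U$, which only says $x^{p^{k(\U)+N}}\in\overline{[\U,\U]}$. Since $x^{p^j}\to 1$ and $\overline{[\U,\U]}\subseteq\U\to\{1\}$ anyway, a naive descent produces no contradiction: the assertion that high powers of $x$ eventually fall into the (shrinking) commutator subgroups is consistent on its face. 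Thus the content of the proposition is exactly the quantitative survival of $C$, namely that its image in $\U^{ab}$ grows in order as $\U$ descends, and this cannot be extracted formally from torsion-freeness and \emph{FAb} alone.

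The step I expect to be the main obstacle is therefore this growth, $m(\U)\to\infty$. In the $p$-adic analytic case it follows from Corollary \ref{analytic}: passing to an open uniform subgroup $U$ of dimension $d$ and letting $\U$ run through its lower $p$-central series (a basis), one has $\Mo_{\U^{ab}}\ge\frac{1}{d}\log_p[U:\U]$; since the mean exponent never exceeds $\log_p e(\U^{ab})$, the exponents themselves satisfy $\log_p e(\U^{ab})\ge\Mo_{\U^{ab}}\to\infty$, which is even stronger than what is claimed. To treat a general, possibly non-analytic, torsion-free \emph{FAb} group I would try to bound from below the order of the image of $C\cap\U$ using the finiteness of $\U^{ab}$ directly — intuitively, if that order stayed bounded, the line $C$ would split a copy of $\Z_p$ off the abelianizations in the limit, violating \emph{FAb} — but turning this heuristic into a proof independent of the uniform structure is the delicate point. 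It is precisely this gap between the exponent and the mean exponent that underlies the open question, raised in the introduction, of whether $\Minf$ can be infinite for a non-analytic \emph{FAb} group.
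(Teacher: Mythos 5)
Your proposal does not prove the proposition: as you yourself acknowledge, your argument is complete only when $\G$ is $p$-adic analytic (via Corollary \ref{analytic} applied to the $p$-central series of a uniform subgroup), and the general torsion-free \emph{FAb} case --- the case of real interest here, since the Galois groups this paper studies are conjecturally \emph{not} analytic --- is left as a heuristic. The gap is genuine, and moreover your meta-claim that the required growth ``cannot be extracted formally from torsion-freeness and \emph{FAb} alone'' is false; the way the paper extracts it is precisely the idea your setup hides from you. By cutting the line $C=\overline{\langle x\rangle}$ down to $C\cap\U$, you force yourself to track the $\U$-dependent power $x^{p^{k(\U)}}$, which tends to $1$ anyway; that is exactly why your negation looks consistent. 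The paper instead \emph{enlarges} $\U$ to a subgroup containing the fixed element $x$: for $\U$ open and normal in $\G$, the subgroup $\V:=\langle x\rangle\U$ is open, and since $\V/\U$ is cyclic, we get $\overline{[\V,\V]}\subseteq\U$. If all exponents of abelianizations of open subgroups were bounded by $p^N$, then applying the bound to $\V$ (not to $\U$) yields
$$x^{p^N}\in\overline{[\V,\V]}\subseteq\U$$
for every open normal $\U$; these subgroups have trivial intersection, so $x^{p^N}=1$, contradicting torsion-freeness. The entire point is that the element trapped inside the shrinking subgroups, $x^{p^N}$, does not depend on $\U$ --- achieved by adapting the test subgroup to $x$ rather than intersecting $x$'s line with $\U$. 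No analytic structure, and no mean-exponent estimate, is needed.

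One caveat in your favor: the subgroups $\langle x\rangle\U$ are of course not members of the prescribed basis, and indeed the paper's proof opens by assuming the bound ``for all open subgroups $\U$''. So what the argument literally establishes is that $e(\U^{\mathrm{ab}})$ is unbounded as $\U$ ranges over \emph{all} open subgroups of $\G$; whether a single adversarially chosen basis can have bounded exponents is a stronger question, and your analysis of why the naive descent fails for it is correct (note that your analytic argument has the same looseness, since it too only exhibits one particular basis with unbounded exponents). But this observation does not rescue the proposal: as it stands it proves neither reading of the statement for non-analytic groups, whereas the $\langle x\rangle\U$ trick settles the question as the paper intends it, in three lines.
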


\begin{proof} 
Suppose  that there exists an integer $k$ such that for all open subgroups $\U$, $e(\U^{ab})\leq k$. Take $1\neq x \in \G$. Then
$\langle x^k\rangle \U \subset [\U,\U]$, that means 
 $$\langle x^k\rangle=\bigcap_\U \langle x^k \rangle \U \subset \bigcap_\U [\U,\U]=\{1\}.$$
 In other words, $x^k=1$ and,  as $\G$ is torsion-free, $x=1$. Contradiction.
\end{proof}

Our work in the previous sections on exponents of $p$-class groups leads us now to defining the following invariant for finitely generated \emph{FAb} pro-$p$ groups.

\begin{defi}
 Let  $\G$ be a \emph{FAb} pro-$p$-group of finite type. 
 For any open subgroup $\U$ of $\G$, since $\U^{\mathrm{ab}}$ is finite, $\Mo_{\U^{\mathrm{ab}}}$
 is well-defined.  For $n\geq 1$, we put
  $$\Mo_n(\G):= \min_{[\G:\U]=p^n}\Mo_{\U^{\mathrm{ab}}}$$
  and then define the asymptotic mean exponent of $\G$ to be $$\Minf(\G):= \liminf_n \Mo_n(\G).$$
\end{defi}

In the remainder of this section, we will show how to estimate the asymptotic mean exponent in  two special cases.

\subsection{In analytic pro-$p$-groups}

As noted by  G\"artner in \cite{Gartner-1}, the exponents of open subgroups of an infinite $p$-adic analytic pro-$p$-group tend to infinity. 
To be more precise, 
let  $\G$ be an analytic pro-$p$-group of dimension $d$. Then $\G$ has  an open  uniform subgroup  $\U$ (of rank  $d$). Put
$\U_1=\U$ and consider for  $i\geq 1$, $\U_{i+1}=\U_i^p[\U_i,\U]$
the $p$-central descending series of $\U$. (For $p=2$, take $\U_{i+1}=\U_i^4[\U_i,\U_i]$.)

\begin{prop}
 Let $p$ be an odd prime, and $\U$  a uniform  pro-$p$-group. Then for each $n$, $\U_n^{\mathrm{ab}}$ has rank $d$ and maps onto $(\Z/p^n\Z)^d$.
\end{prop}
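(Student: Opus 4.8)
The plan is to exploit the defining structure of a uniform pro-$p$ group, for which the theory in \cite{DSMN} gives very precise control over the lower $p$-central series. Recall that $\U$ uniform of dimension $d$ means $\U$ is finitely generated, torsion-free, and powerful, so that the map $x \mapsto x^p$ induces an isomorphism $\U_i/\U_{i+1} \Mapsto \U_{i+1}/\U_{i+2}$ at each stage, and each quotient $\U_i/\U_{i+1}$ is elementary abelian of rank exactly $d$. In fact the standard theory gives $\U_n = \U^{p^{n-1}} = \{x^{p^{n-1}} : x \in \U\}$ as a set, and $\U_n$ is itself uniform of dimension $d$. First I would invoke these facts to pin down the generator rank: since $\U_n$ is powerful and of dimension $d$, its Frattini quotient $\U_n/\U_n^p[\U_n,\U_n]$ has rank $d$, and because abelianization only collapses the commutator part further, $\U_n^{\mathrm{ab}}$ is generated by exactly $d$ elements. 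This establishes the rank claim.

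For the surjection onto $(\Z/p^n\Z)^d$, the plan is to produce it by descending through the central series. Writing $\U_n^{\mathrm{ab}} = \U_n/[\U_n,\U_n]$, I would observe that the successive quotients $\U_n/\U_{n+k}$ for $k = 0, 1, \ldots, n$ are all elementary abelian of rank $d$, stacked via the $p$-power isomorphisms. Concretely, choose a topological generating set $x_1,\ldots,x_d$ of $\U$ whose images form an ordered basis adapted to the uniform structure; then the elements $x_1^{p^{n-1}},\ldots,x_d^{p^{n-1}}$ topologically generate $\U_n$, and the uniformity of $\U$ forces $x_i^{p^{n-1}}$ to have order exactly $p^n$ in the relevant abelian quotient. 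The key step is to exhibit a continuous homomorphism $\U_n^{\mathrm{ab}} \to (\Z/p^n\Z)^d$ sending these generators to the standard basis and checking it is surjective; this reduces to verifying that the orders are not prematurely killed, which follows from the fact that $\U_n \supset \U_{2n}$ with $\U_n/\U_{2n}$ isomorphic to $(\Z/p^n\Z)^d$ as groups under the powerful structure.

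The cleanest route, which I would actually take, is to quote from \cite{DSMN} the isomorphism $\U_n/\U_{2n} \cong (\Z/p^n\Z)^d$, valid for $p$ odd, where $\U_{2n}$ denotes the $2n$-th term of the $p$-central series of $\U_n$ regarded against $\U$. Since this intermediate group is already abelian, it is a quotient of $\U_n^{\mathrm{ab}}$, giving the desired surjection $\U_n^{\mathrm{ab}} \twoheadrightarrow (\Z/p^n\Z)^d$ immediately; the rank statement then also follows since the target has rank $d$ and we already bounded the rank of $\U_n^{\mathrm{ab}}$ above by $d$. The main obstacle I anticipate is purely book-keeping at $p=2$: the definitions of the central series differ (as the theorem's setup already signals by using $\U_{i+1}=\U_i^4[\U_i,\U_i]$ there), and the clean $p$-power isomorphism can fail, which is precisely why the proposition is stated only for odd $p$. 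So I would be careful to restrict all the uniform-group identities to the odd case and not attempt to force the $p=2$ analogue here.
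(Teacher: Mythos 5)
Your proposal is correct, and your actual proof (the third paragraph) takes a genuinely different, more structural route than the paper's. The paper argues elementwise by contradiction: for $x$ in a minimal generating set of $\U_n$, if $x^{p^k}\in[\U_n,\U_n]$ with $k<n$, then since $[\U_n,\U_n]\subseteq \U_{2n}$ and the $p$-power maps induce isomorphisms $\U_n/\U_{n+1}\cong \U_{n+m}/\U_{n+m+1}$, one gets $x$ trivial in $\U_n/\U_{n+1}$, contradicting minimality; hence every minimal generator has order at least $p^n$ in $\U_n^{\mathrm{ab}}$, which yields both the rank and the surjection. You instead invoke the homocyclic quotient $\U_n/\U_{2n}\cong(\Z/p^n\Z)^d$ and note that, being abelian, it is automatically a quotient of $\U_n^{\mathrm{ab}}$; this gives the surjection in one line, and the rank then gets squeezed between the Frattini bound $d(\U_n^{\mathrm{ab}})=d(\U_n)=d$ and the rank of the target. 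Note that both arguments ultimately rest on the same two facts about the lower $p$-series of a powerful group: the containment $[\U_n,\U_n]\subseteq\U_{2n}$ (which is exactly what makes your quotient abelian, so you are using it implicitly) and the $p$-power bijections on graded pieces (which give $\U_{2n}=\U_n^{p^n}$, so that $\U_n/\U_{2n}$ is abelian of order $p^{nd}$, exponent dividing $p^n$, with at most $d$ generators, forcing it to be $(\Z/p^n\Z)^d$). So the difference is one of packaging: yours is shorter and quotable, while the paper's is self-contained at the level of generators and proves the marginally sharper fact that every element of every minimal generating set of $\U_n$ has order at least $p^n$ in the abelianization. One small caution: in your middle paragraph you assert that the quotients $\U_n/\U_{n+k}$, $k=0,\dots,n$, are elementary abelian of rank $d$; this is false for $k\geq 2$ (only the consecutive quotients $\U_i/\U_{i+1}$ are elementary abelian). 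Since that paragraph is superseded by your final argument, the slip is harmless, but it should be deleted or corrected to refer to the successive graded quotients.
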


\begin{proof}
 
 Take $n >1$. Let  $x \in \U_n$ be an element of a minimal family of generators of   $\U_n$: the element $x$ is not trivial in the quotient
  $\U_n/\U_n^p[\U_n,\U_n]$. As 
  $\U$ is uniform, one has $\U_n^p[\U_n,\U_n]=\U_{n+1}$ and then $x$ is not trivial in $\U_n/\U_{n+1}$.
 Suppose now that the order $p^k$  of $x$ in $\U_n/\U_{n+1}$ is smaller than $p^{n-1}$, {\it i.e.} $x^{p^k} \in [\U_n,\U_n] $
 with $k <n$. Then as  $[\U_n,\U_n]\subset \U_{2n}$, one has $x^{p^k} \in \U_{2n}$.
 But as  $\U$ is uniform, for all  $m$ the following isomorphism holds: $$\U_{n}/\U_{n+1} \stackrel{x\mapsto x^{p^m}}{\longrightarrow} \U_{n+m}/\U_{n+m+1}.$$
The integer  $k$ being supposed smaller than  $n$, we find $x^{p^{n-1}}=1$ in $\U_{2n-1}/\U_{2n}$ and then 
$x =1$  in  $\U_n/\U_{n+1}$. Contradiction. Hence every 
 element of a generator basis of  $\U_n$ is of order at least $p^{n}$.
\end{proof}

\begin{coro}\label{analytic}
 Let $\G$ be a  uniform analytic pro-$p$-group of dimension $d$. Consider  the sequence 
 $\Mo_{\G_n^{\mathrm{ab}}}$ of mean exponents for the abelianizations of terms of the $p$-central
 series.
 We have $$\Mo_{\G_n^{\mathrm{ab}}} \geq n =\frac{1}{d}\log_p [\G:\G_n].$$
\end{coro}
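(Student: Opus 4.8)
The plan is to read off the two quantities that feed into the mean exponent — the rank and the order of $\G_n^{\mathrm{ab}}$ — directly from the preceding proposition, and then simply divide. Since $\G$ is uniform of dimension $d$, the descending $p$-central series $\G=\G_1\supseteq\G_2\supseteq\cdots$ is exactly the series $\U_n$ to which that proposition applies, taking $\U=\G$. Hence for each $n$ the abelianization $\G_n^{\mathrm{ab}}$ has $p$-rank equal to $d$ and admits a surjection onto $(\Z/p^n\Z)^d$.

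First I would record the consequence for the order. The surjection $\G_n^{\mathrm{ab}}\twoheadrightarrow(\Z/p^n\Z)^d$ forces $|\G_n^{\mathrm{ab}}|\geq p^{nd}$, so $\log_p|\G_n^{\mathrm{ab}}|\geq nd$. Combining this with the fact that the rank is exactly $d$, the definition of the mean exponent yields
$$\Mo_{\G_n^{\mathrm{ab}}}=\frac{\log_p|\G_n^{\mathrm{ab}}|}{d}\geq\frac{nd}{d}=n,$$
which is the desired inequality. (If one wants the sharper statement that every invariant factor of $\G_n^{\mathrm{ab}}$ is at least $p^n$, it follows from the same surjection between groups of equal rank $d$, but only the order bound is needed here.)

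It then remains to identify $n$ with $\frac{1}{d}\log_p[\G:\G_n]$. For this I would invoke the standard structural fact about uniform groups (see \cite{DSMN}): each graded quotient $\G_i/\G_{i+1}$ of the $p$-central series is elementary abelian of rank exactly $d$, so $[\G_i:\G_{i+1}]=p^d$. Telescoping over the series gives $\log_p[\G:\G_n]=nd$ with the indexing in force, whence $n=\frac{1}{d}\log_p[\G:\G_n]$ and the two displayed expressions coincide.

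The point requiring the most care is the hypothesis on $p$. The preceding proposition is stated for odd $p$, whereas the corollary is phrased for an arbitrary prime; for $p=2$ one must instead work with the series $\G_{i+1}=\G_i^4[\G_i,\G_i]$ flagged before that proposition, re-running the identical rank-and-order bookkeeping to recover a surjection onto a group of the shape $(\Z/p^n\Z)^d$ (up to the harmless shift in the power of $2$). I would also double-check the labeling convention of the terms $\G_n$ so that the telescoping index computation produces $p^{nd}$ rather than $p^{(n-1)d}$: the clean inequality $\Mo_{\G_n^{\mathrm{ab}}}\geq n$ is insensitive to an off-by-one, but the asserted equality $n=\frac{1}{d}\log_p[\G:\G_n]$ pins it down.
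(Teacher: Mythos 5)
Your proof is correct and takes essentially the same route as the paper: the paper's own proof consists of the single line ``follows immediately from the previous Proposition,'' and your write-up is precisely that deduction spelled out (rank $d$, surjection onto $(\Z/p^n\Z)^d$, divide by the rank). Your caution about indexing is also well placed --- with the paper's convention $\G_1=\G$ one gets $[\G:\G_n]=p^{(n-1)d}$, so the displayed equality $n=\frac{1}{d}\log_p[\G:\G_n]$ in the corollary carries an off-by-one that the paper glosses over, while the substantive inequality $\Mo_{\G_n^{\mathrm{ab}}}\geq n$ is unaffected.
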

\begin{proof}
Follows immediately from the previous Proposition.
\end{proof}

\begin{rema}[\cite{DSMN}, Chapter 13] 
Let us replace   $\Z_p$ by the  complete local regular Noetherien  ring $R=\Z_p[[T_1,\cdots, T_k]]$ with residue field $\fq_p$ and dimension $k+1$; here $\mm=(p,T_1,\cdots, T_k)$ is the maximal ideal of $R$.
 Let $\displaystyle{\grad(R) =\bigoplus_{i\geq 0} \mm^i/\mm^{i+1}}$ be the  graded algebra; put $c_i = \dim_{\fq_p} \mm^i/\mm^{i+1}$. 
 Following the terminology of  \cite{DSMN}, 
 consider $\G$ an $R$ standard and perfect group of dimension $d$. For example ${\rm Sl}_n^1(R):= \ker \left( {\rm Sl}_n(R) \rightarrow  {\rm Sl}_n(\fq_p)\right)$ is such a group for  $p>2$. In particular, $\G=\mm^d$ as analytic variety on which there is a formal group law $F$. Let us consider the filtration of $\G$: $\G_n\simeq (\mm^n)^d$, $n\geq 1$.
 Then, for all integers  $m,n\geq 1$, $[\G_m,\G_n]= \G_{m+n}$ ($\G$ is perfect) and there is an isomorphism of groups $\G_n^{ab}\simeq \left(\mm^n/\mm^{2n}\right)^d$, where the formal law on the quotient $\mm^n/\mm^{2n}$ becomes the addition.
 As the quotients $\mm^i/\mm^{i+1}$ are $p$-elementary, one has $$v_p([\G:\G_n])=\log_p [R:\mm^n]=c_1+c_2+\cdots + c_{n-1}.$$
 By using the  Hilbert-Samuel polynomial $H = CX^{k+1}+ \cdots$ of $\grad(R)$, $C>0$ ({\it i.e.}  
 $\deg(H) = k+1$), we have
 $$v_p([\G:\G_{n}]) \sim_n d H(n-1) \sim_n C d n^{k+1},$$ and
  $$v_p(|\G_n^{ab}|)=v_p([\G_{n}:\G_{2n}]) \sim_n d\left(H(2n-1)-H(n-1)\right) \sim_n c d (k+1) n^{k+1}(2^{k+1}-1).$$
  To finish, we want to bound the $p$-rank $d_p \G_n$ of $\G_n$:  $d_p \G_n =d\cdot  d_p (\mm^n/(p\mm^n+\mm^{2n}))$. First we have the following exact sequence:
  $$0 \longrightarrow  (p^{n-1}\mm+ \cdots  + p\mm^{n-1})/ p\mm^n \longrightarrow \mm^n/(p\mm^n+\mm^{2n}) \longrightarrow \mmm^n/\mmm^{2n} \longrightarrow 0,$$
  where $\mmm$ is the maximal ideal of $\fq_p[[T_1,\cdots, T_k]]$.
  Now the natural morphism:
  $$ \begin{array}{rcl}\mmm/\mmm^2 \times \cdots \times \mmm^{n-1}/\mmm^n & \rightarrow &  p^{n-1}\mm+ \cdots  + p\mm^{n-1} \ \bmod  p\mm^n \\
  ({\overline x_1},\cdots, {\overline x_{n-1}}) & \mapsto & p^{n-1} x_1 + \cdots p x_{n-1} \ \bmod p\mm^n
  \end{array}$$ allows us to obtain  $$d_p \G_n \leq a_1 + \cdots + a_{2n-1},$$
  where $a_i= d_p \mmm^{i-1}/\mmm^{i}$. The local ring $\fq_p[[T_1,\cdots, T_k]]$ is of dimension $k$, and then, if $\overline{H}=C' X^{k} + \cdots $
  is the Hilbert-Samuel of the graded algebra $\fq_p[[T_1,\cdots, T_k]]$, we have for $n\gg 0$:
  $$d_p \G_n \ll n^{k}.$$ 
  Finally, one obtains $$ \Mo_{\G_n^{\mathrm{ab}}}  \gg n\gg \left(\log_p[\G:\G_n]\right)^{1/(k+1)} .$$
\end{rema}

\subsection{Bounding $\Minf(\G_S^T)$ for tame $S$}

First, thanks to Proposition \ref{proposition2.15}, for the Galois group  $\G=\G_S^T$  of a tame tower $\K_S^T/\K$,  we have $$\Minf(\G) \leq c(\K,S,T) \limsup_\U \frac{[\G:\U]}{d(\U)},$$ 
where $c(\K,S,T)$ is a quantity that depends only on $\K,S,T$.  So, we must consider the rate of growth of the generator rank of open subgroups of $\G$ with respect to their index.  Recall that the \emph{rank gradient} of $\G$ (see, for example \cite{Ershov}) is defined to be
$$
\rho(\G) = \liminf_{\H} \frac{d(\H) - 1}{[\G:\H]},
$$
where the infimum is taken over all open subgroups $\H \subset \G$.
Note that when $\U\subset \V$,  Schreier's formula gives the inequality  
$\displaystyle{\frac{d(\U)-1}{[\G:\U]} \leq \frac{d(\V)-1}{[\G:\V]}}$ showing that  the sequence $[\G:\U_i]/d(\U_i)$ is increasing for a nested sequence $(\U_i)$ of open subgroups.  For groups with positive rank gradient $\varepsilon$, the $p$-rank of open subgroups grows $\varepsilon$-lineraly with the index (compare definition  \ref{croissancelineaire}).

\

In the general case, lacking any knowledge on the behavior of $d(\U)$, we nonetheless have the following result (Part 1 of Theorem \ref{intro}).

\begin{prop}\label{theo-1}
Suppose $S$ is a finite set of primes of a number field $\K$ with $(S,p)=1$. Let $\G=\G_S^T$.  There is a constant $C>0$ such that for any open subgroup $\U$ of $\G$, we have
$\Mo_{\U^{\mathrm{ab}}} \leq C [\G:\U].$
\end{prop}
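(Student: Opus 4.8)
The plan is to bound $\Mo_{\U^{\mathrm{ab}}}$ by the cruder quantity $\log_p|\U^{\mathrm{ab}}|$ and then control the latter linearly in the index by concatenating the three standard estimates already recorded. Since $(S,p)=1$, the group $\G$ is \emph{FAb}, so $\U^{\mathrm{ab}}$ is a finite $p$-group; if it is trivial the asserted bound is vacuous, and otherwise its $p$-rank is at least $1$, whence $\Mo_{\U^{\mathrm{ab}}} = \log_p|\U^{\mathrm{ab}}|/d_p\U^{\mathrm{ab}} \leq \log_p|\U^{\mathrm{ab}}|$. Thus it suffices to produce a constant $C$, depending only on $\K,S,T$, with $\log_p|\U^{\mathrm{ab}}| \leq C\,[\G:\U]$.

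Next I would identify $\U^{\mathrm{ab}}$ arithmetically. Let $\K'$ be the fixed field of $\U$, so that $\K \subseteq \K' \subseteq \K_S^T$ and $[\K':\K]=[\G:\U]$. By class field theory $\U^{\mathrm{ab}} = \Gal(M/\K')$, where $M$ is the maximal abelian subextension of $\K_S^T/\K'$; since $\K_S^T/\K$ is unramified outside $S$ and totally split at $T$, the extension $M/\K'$ is unramified outside the places of $\K'$ above $S$ and split at the places above $T$. Hence $\U^{\mathrm{ab}}$ is a quotient of the $S$-ramified, $T$-split $p$-class group $\A_S^T(\K')$, and in particular $\log_p|\U^{\mathrm{ab}}| \leq \log_p|\A_S^T(\K')|$.

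It then remains to bound $\log_p|\A_S^T(\K')|$. Lemma \ref{cardinal} gives $\log_p|\A_S^T(\K')| \leq \log_p|\A(\K')| + [\K':\K]\,a(S)$, with $a(S)$ as in Definition \ref{aS}; Lemma \ref{brauer} gives $\log_p|\A(\K')| \leq \log_p h_{\K'} \leq C_0 \log_p|\d(\K')|$; and, because $\K'/\K$ is unramified outside $S$, Proposition \ref{rootdiscriminantborne} gives $\log_p|\d(\K')| \leq [\K':\K]\log_p\d(\K,S)$. Stringing these together yields $\log_p|\U^{\mathrm{ab}}| \leq [\G:\U]\bigl(C_0\log_p\d(\K,S) + a(S)\bigr)$, so one may take $C = C_0\log_p\d(\K,S)+a(S)$. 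This is essentially the cardinality estimate underlying Proposition \ref{proposition2.15} (taken with $\Sigma=S$), combined with the trivial bound $d_p\U^{\mathrm{ab}}\geq 1$.

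The one point requiring a little care is that $\K'/\K$ need not be Galois, whereas Lemma \ref{cardinal} is phrased for Galois extensions. This is not a genuine obstacle: the estimate there is purely local, bounding the tame inertia at each place of $\K'$ above $S$, and one checks that $\sum_{w\mid\p} a(w) \leq [\K':\K]\,a(\p)$ for each $\p\in S$ since every residue degree $f_w$ in the pro-$p$ extension $\K_S^T/\K$ is a power of $p$, so that $\N(\p)\equiv 1 \pmod p$ whenever $\N(w)\equiv 1\pmod p$ and lifting-the-exponent applies; alternatively one passes to the Galois closure of $\K'/\K$ inside $\K_S^T$. I expect this bookkeeping to be the only mildly delicate step, everything else being a direct concatenation of the cited results.
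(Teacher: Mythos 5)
Your proof is correct and follows essentially the same route as the paper: the paper's proof is simply to invoke Proposition \ref{proposition2.15} with the trivial bound $d(\U)\geq 1$, and your argument unpacks exactly that chain of estimates (Lemma \ref{cardinal}, Lemma \ref{brauer}, Proposition \ref{rootdiscriminantborne}) after identifying $\U^{\mathrm{ab}}$ with a quotient of $\A_S^T(\K')$ via class field theory. Your closing remark on the non-Galois issue in Lemma \ref{cardinal} is a legitimate point of care that the paper glosses over, and your fix (reduction to a tower of cyclic degree-$p$ steps, which always exists inside a pro-$p$ extension) is the right one.
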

\begin{proof}
We simply apply Proposition \ref{proposition2.15}, merely noting that $d(\U)\geq 1$.
\end{proof}

\begin{Question}
Is the conclusion of Proposition \ref{theo-1} true for every \emph{FAb} pro-$p$ group of finite type?
\end{Question}


\medskip

In the  main result of this section, for certain special subgroups $\U$ of $\G$, we give lower bounds  for $d(\U)$,  which allows us to estimate $\Mo_{\U^{\mathrm{ab}}}$.  The main references are \S 11 and \S 12 of \cite{DSMN}.

\

First of all, a key result is a Theorem of Jennings which asserts that for any group there exists a connection between the enveloping algebra associated to a certain graduated algebra $\grad(\G)$ of $\G$ and the restricted enveloping algebra of $\fq_p[G]$ graded by the ideal augmentation $I$. Here, $\grad(\G):=\oplus_{i\geq 0} D_i/D_{i+1}$, where $D_i=(1+ I^n) \cap \G$; put $b_i:=d_p D_i/D_{i+1}$.  The filtration $(D_n)$ is called the Zassenhaus filtration of $\G$; this filtration satisfies these  mains properties: $$D_1=G, \ D_n=D_{n^*}^p\prod_{i+j=n}[D_i,D_j], \  D_n^p \subset D_{np}, \ {\rm and} \ [D_n,D_m] \subset D_{n+m},$$
where $n^*=\lceil n/p \rceil$. Hence, $D_i/D_{i+1}\simeq (\Z/p\Z)^{b_i}$.

\medskip

The relationship between these two associative algebras gives a link between the $b_i $ and the $c_j:=d_p I^j/I^{j+1}$. More precisely, if $\displaystyle{U(T):=\sum_{ n\geq 0} c_n T^n}$ is the Hilbert Poincar\'e series of the graded algebra $\fq_p[[\G ]]$ then 
$$U(T)= \prod_{i\geq 1}\left(\frac{T^{pi}-1}{T^i-1}\right)^{b_i}.$$ In particular, when $\G$ is analytic the $p$-rank of its open subgroups are bounded and then, the integers $b_i$ should often vanish.  In fact, one has the spectacular result that $b_i=0$ for a single integer $i$ if and only if the pro-$p$-group is analytic. The following beautiful  lemma is a consequence of all of this:

\begin{lemm} \label{minorationrangbis} Suppose $\varepsilon >0$. If $\G$ is not analytic, then there exist infinitely many $n$ such that $$d_p D_{2^n} \geq (1-\varepsilon) \log_p [\G:D_{2^n}],$$
where  $D_{2^n}$ runs in  the Zassenhaus filtration $(D_k)$ of $\G$ 
\end{lemm}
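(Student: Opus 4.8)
The plan is to translate the two group-theoretic quantities $d_p D_{2^n}$ and $\log_p[\G:D_{2^n}]$ into the graded data $b_i=d_p D_i/D_{i+1}$, and then to extract the required $n$ by a growth comparison. The starting observation is that the stated analyticity criterion — $b_i=0$ for some $i$ if and only if $\G$ is analytic — forces $b_i\geq 1$ for every $i\geq 1$ as soon as $\G$ is non-analytic. Since $D_i/D_{i+1}\simeq(\Z/p\Z)^{b_i}$, telescoping gives $\log_p[\G:D_m]=\sum_{i=1}^{m-1}b_i$; in particular $\log_p[\G:D_{2^n}]=\sum_{i=1}^{2^n-1}b_i$, which I abbreviate as $S_n$. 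Note that $S_n\geq 2^n-1$ because each $b_i\geq 1$.

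The core step is a lower bound for the generator rank $d_p D_{2^n}=\dim_{\fq_p}D_{2^n}/\Phi(D_{2^n})$. Here I use the Zassenhaus properties recalled above: $[D_{2^n},D_{2^n}]\subset D_{2^{n+1}}$ by $[D_n,D_m]\subset D_{n+m}$, and $D_{2^n}^p\subset D_{p\cdot 2^n}\subset D_{2^{n+1}}$ by $D_n^p\subset D_{np}$ together with $p\geq 2$. Hence the Frattini subgroup satisfies $\Phi(D_{2^n})=D_{2^n}^p[D_{2^n},D_{2^n}]\subseteq D_{2^{n+1}}$, so the quotient map gives a surjection $D_{2^n}/\Phi(D_{2^n})\twoheadrightarrow D_{2^n}/D_{2^{n+1}}$. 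Counting ranks yields $d_p D_{2^n}\geq \dim_{\fq_p}D_{2^n}/D_{2^{n+1}}=\sum_{i=2^n}^{2^{n+1}-1}b_i=S_{n+1}-S_n$. This doubling containment is precisely why the statement is phrased along the dyadic subsequence $D_{2^n}$.

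With these two identities the lemma reduces to a purely numerical claim: there are infinitely many $n$ with $S_{n+1}-S_n\geq(1-\varepsilon)S_n$, equivalently $S_{n+1}\geq(2-\varepsilon)S_n$. I would prove this by contradiction. If $S_{n+1}<(2-\varepsilon)S_n$ held for all $n\geq N$, then $S_n<(2-\varepsilon)^{\,n-N}S_N$ for all $n\geq N$; but $S_n\geq 2^n-1$, so $(2^n-1)/(2-\varepsilon)^n$ would remain bounded, which is impossible since $2/(2-\varepsilon)>1$. Thus $S_{n+1}\geq(2-\varepsilon)S_n$ holds for infinitely many $n$, and for each such $n$ we conclude $d_p D_{2^n}\geq S_{n+1}-S_n\geq(1-\varepsilon)S_n=(1-\varepsilon)\log_p[\G:D_{2^n}]$, as desired.

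The main obstacle is the rank bound $d_p D_{2^n}\geq \dim_{\fq_p}D_{2^n}/D_{2^{n+1}}$: one must ensure that $\Phi(D_{2^n})$ lands inside $D_{2^{n+1}}$ rather than merely inside some $D_{2^n+c}$, and this is exactly what the subadditivity $[D_n,D_m]\subset D_{n+m}$ and the power estimate $D_n^p\subset D_{np}$ secure for dyadic indices. Everything else — the telescoping formula for $\log_p[\G:D_{2^n}]$, the lower bound $b_i\geq 1$ coming from non-analyticity, and the elementary growth comparison — is routine once this containment is in hand.
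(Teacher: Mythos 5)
Your proof is correct. Note that the paper offers no argument of its own for this lemma: its ``proof'' is the single line ``It is the lemma 11.8 of \cite{DSMN}'', so your write-up is a self-contained reconstruction of the cited result rather than a parallel of anything actually carried out in the text. Your three ingredients are the right ones, and they coincide with the toolkit the paper deploys later in the proof of Theorem \ref{theo-2}: the telescoping identity $\log_p[\G:D_{2^n}]=\sum_{i=1}^{2^n-1}b_i$; the bound $b_i\geq 1$ for all $i$, extracted from the stated characterization of analyticity (some $b_i=0$ if and only if $\G$ is analytic); and the dyadic Frattini containment $D_{2^n}^p[D_{2^n},D_{2^n}]\subseteq D_{2^{n+1}}$, which yields the surjection $D_{2^n}/\Phi(D_{2^n})\twoheadrightarrow D_{2^n}/D_{2^{n+1}}$ and hence $d_p D_{2^n}\geq \log_p[D_{2^n}:D_{2^{n+1}}]$ --- exactly the inequality the paper writes as $i_{n+1}\leq i_n+d_n$ in Theorem \ref{theo-2}. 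Your contradiction argument (if $S_{n+1}<(2-\varepsilon)S_n$ for all large $n$, then $S_n$ grows like $(2-\varepsilon)^n$, contradicting $S_n\geq 2^n-1$) is likewise the same style of growth comparison used there, so your proof both fills the citation and fits the paper's methods. One cosmetic point: for $\varepsilon\geq 1$ the statement is trivially true since $d_p D_{2^n}\geq 0$, so you may as well assume $0<\varepsilon<1$ at the outset; this keeps $2-\varepsilon>1$ and makes the final step, that $\bigl(2/(2-\varepsilon)\bigr)^n\to\infty$, unambiguous for every admissible $\varepsilon$.
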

 
 \begin{proof} It is the lemma 11.8 of \cite{DSMN}.
  \end{proof}

    \begin{defi} 
  A finitely generated pro-$p$ group $\G$ is said to be of Golod-Shafarevich type if all the relations
  are of degree $2$ and 
 $d^2\geq 4r$ where $d,r$ are the generator rank and relation rank of $\G$, respectively, cf.~Theorem \ref{GS}.
\end{defi}

\begin{rema}
A pro-$p$-group of Golod-Shafarevich type with relation rank $r>1$ is not analytic, 
cf.~\cite{lazard} and \cite{Serre}.
 If a pro-$p$ group is mild, with respect to the Zassenhaus filtration, and
all its relations are of degree $2$, then it is of Golod-Shafarevich type (and of cohomological dimension $2$) -- see \cite{labute}.
\end{rema}

\begin{prop} Suppose that the conditions of Theorem \ref{theocritere} hold for a number field $\K$,
so that $\G=\G_\emptyset^T$ is infinite.
 Then there exists a constant $C$ and infinitely many $n$ such that,  $$\Mo_{D_{2^n}^{\mathrm{ab}}} \leq C \frac{[\G: D_{2^n}]}{\log_p[\G:D_{2^n}]},$$
 where  $D_{2^n}$ runs in  the Zassenhaus filtration $(D_k)$ of $\G$.
\end{prop}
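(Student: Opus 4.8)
The plan is to combine two ingredients: the class-field-theoretic upper bound for $\Mo_{\U^{\mathrm{ab}}}$ coming from Proposition \ref{proposition2.15}, and a lower bound for the generator rank $d(\U)=d_p\,\U$ along the Zassenhaus filtration coming from Lemma \ref{minorationrangbis}. Recall that the proof of Proposition \ref{theo-1} in fact produces, via Proposition \ref{proposition2.15} applied with $S=\Sigma=\emptyset$, a constant $c=c(\K,T)$ (one may take $c=C_0\log_p|\d(\K)|$, with $C_0$ as in Lemma \ref{brauer}) such that for \emph{every} open subgroup $\U\subseteq\G$ one has
$$\Mo_{\U^{\mathrm{ab}}}\ \leq\ c\,\frac{[\G:\U]}{d(\U)};$$
Proposition \ref{theo-1} then simply discards the denominator using $d(\U)\geq 1$. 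The whole point here is to retain $d(\U)$ and feed in a genuine lower bound for it when $\U=D_{2^n}$.

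The key preliminary step, and the one I expect to be the main obstacle, is to verify that $\G$ is \emph{not} $p$-adic analytic, since this is precisely the hypothesis needed to invoke Lemma \ref{minorationrangbis}. The conditions of Theorem \ref{theocritere} force the strict Golod--Shafarevich inequality $r(\G)<d(\G)^2/4$ (this is exactly the content of the $2\sqrt{\ \cdot\ }$ term there, via Theorem \ref{GS} and Proposition \ref{boundrelations}), and $\G$ is moreover \emph{FAb} because $S=\emptyset$ satisfies $(S,p)=1$. A Golod--Shafarevich group of this kind is not analytic: the strict inequality forces exponential growth of the dimensions $c_n=d_p I^n/I^{n+1}$, whereas for an analytic group these grow only polynomially (equivalently, by the criterion recalled from \cite{DSMN}, some $b_i$ would have to vanish, which cannot happen). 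One must be slightly careful to exclude the virtually abelian analytic groups such as $\Z_p$, which do satisfy $r<d^2/4$; this is where the \emph{FAb} hypothesis is used. I would therefore isolate this as a short lemma asserting that a finitely generated \emph{FAb} pro-$p$ group satisfying the strict Golod--Shafarevich inequality is non-analytic, and prove it by the growth argument above.

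Granting non-analyticity, the conclusion follows quickly. First note that, $\G$ being finitely generated pro-$p$, each $D_{2^n}$ is an open (finite-index) subgroup, so the displayed bound applies with $\U=D_{2^n}$ and $d(\U)=d_p D_{2^n}$. Fixing, say, $\varepsilon=\frac{1}{2}$, Lemma \ref{minorationrangbis} provides infinitely many $n$ with $d_p D_{2^n}\geq \frac{1}{2}\log_p[\G:D_{2^n}]$. For each such $n$ we combine the two estimates:
$$\Mo_{D_{2^n}^{\mathrm{ab}}}\ \leq\ c\,\frac{[\G:D_{2^n}]}{d_p D_{2^n}}\ \leq\ 2c\,\frac{[\G:D_{2^n}]}{\log_p[\G:D_{2^n}]},$$
which is the asserted inequality with $C=2c$. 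The remaining verifications are routine: that the fixed fields of the $D_{2^n}$ form a tower inside $\K_\emptyset^T$ to which the (pointwise) estimate of Proposition \ref{proposition2.15} applies, and that the identification $d(\U)=d_p\,\U^{\mathrm{ab}}$ matches the $p$-rank $d_n$ appearing there.
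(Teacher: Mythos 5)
Your proof is correct and follows essentially the same route as the paper's: both arguments establish that the conditions of Theorem \ref{theocritere} force $\G$ to be non-analytic (via Golod--Shafarevich considerations), and then combine the rank lower bound of Lemma \ref{minorationrangbis} along the Zassenhaus filtration with the mean-exponent upper bound of Proposition \ref{proposition2.15}. The only difference is one of detail: where the paper simply invokes its remark that a Golod--Shafarevich-type group with $r>1$ is not analytic (citing Lazard and Serre), you give a self-contained justification of non-analyticity, correctly noting that the \emph{FAb} hypothesis is needed to rule out analytic groups like $\Z_p$ that also satisfy the strict inequality $r<d^2/4$.
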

\begin{proof}
The conditions of Theorem \ref{theocritere} entail that $\G$ is of Golod-Shafarevich type, hence is not
analytic.  The desired conclusion is
therefore a consequence of Lemma \ref{minorationrangbis} and Proposition \ref{proposition2.15}.
\end{proof}

\

  To finish, let us  improve the lower bound of Lemma \ref{minorationrangbis}.
To simplify, assume that $p>2$.
  
   Let $$ 1 \longrightarrow R \longrightarrow F \longrightarrow \G \longrightarrow 1,$$
be a minimal presentation of $\G$: the pro-$p$-group $F$ is free and generated by $d$ elements  $x_1,\cdots, x_d$.
  We assume that $\G$ is finitely presented: the dimension over $\fq_p$ of $H^2(\G,\fq_p)$ is finite. Let  $\rho_1, \cdots, \rho_r \ \in F$ be
   a system of generators of $R/R^p[F,R]$. For $i=1,\cdots, r$, let $a_i$ be the degree of $\rho_i$ following the Zassenhaus filtration of $F$.

\begin{defi} For two formal series with real coefficients, 
we say that \ $\sum_n \alpha_n T^n \geq  \sum_n \alpha_n' T^n$ \ if for all $n$, $\alpha_n \geq \alpha_n'$.
\end{defi}

  \begin{prop}
Let $\G$ be a finitely presented pro-$p$-group.
Let $U(t)$ be the Hilbert Poincar\'e series of the graded algebra $\fq_p[[\G]]$. Then $$U(T) \geq \frac{1}{1-dT+\sum_{i=1}^r T^{a_i}},$$
with equality if $\G$ is of cohomological dimension at most $2$.
\end{prop}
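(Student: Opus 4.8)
The plan is to pass to the associated graded algebra $A := \grad(\fq_p[[\G]]) = \bigoplus_{n\geq 0} I^n/I^{n+1}$, whose Hilbert--Poincar\'e series is by definition $U(T)=\sum_n c_n T^n$, and to realize $A$ as a quotient of a free associative algebra so that the $d$ generators and $r$ relations of $\G$ translate into the beginning of a free resolution whose Euler characteristic produces the bound. By Jennings' theorem applied to the free pro-$p$ group $F$, the graded algebra $\Lambda := \grad(\fq_p[[F]])$ is the free associative algebra $\fq_p\langle X_1,\ldots,X_d\rangle$ on the images $X_i$ of the $x_i$, each placed in degree $1$, so that $\Lambda$ has Hilbert series $1/(1-dT)$. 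The surjection $\fq_p[[F]]\twoheadrightarrow\fq_p[[\G]]$ induces $\Lambda\twoheadrightarrow A$, giving $A=\Lambda/J$ for the graded two-sided ideal $J$ of leading forms. Since $a_i$ is by definition the Zassenhaus degree of $\rho_i$, the element $\rho_i-1$ lies in $I_F^{a_i}\setminus I_F^{a_i+1}$, so its leading form $f_i\in\Lambda_{a_i}$ is a nonzero homogeneous relation of degree $a_i$, and the $f_i$ generate $J$.

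First I would build the beginning of the minimal graded free resolution of the trivial module $\fq_p$ over $A$,
$$ A\otimes_{\fq_p} W_2 \xrightarrow{\ \partial_2\ } A\otimes_{\fq_p} W_1 \xrightarrow{\ \partial_1\ } A \xrightarrow{\ \epsilon\ } \fq_p \longrightarrow 0,$$
where $W_1$ is concentrated in degree $1$ with $\dim_{\fq_p} W_1=d$ (Hilbert series $dT$), encoding the generators, and $W_2=\bigoplus_{i=1}^r \fq_p e_i$ with $e_i$ in degree $a_i$ (Hilbert series $\sum_{i=1}^r T^{a_i}$), with $\partial_2(e_i)$ the degree-$a_i$ syzygy attached to $f_i$. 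This complex is exact at $\fq_p$ and at $A$ because $\Im\partial_1=A_+=\bigoplus_{n\geq1}A_n$, and it is exact at $A\otimes W_1$ because the relations $\rho_1,\ldots,\rho_r$ generate the module of syzygies, i.e.\ $\Im\partial_2=\ker\partial_1$.

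Next I would extract the inequality by an Euler-characteristic computation. Writing $N:=\ker\partial_2$, a graded $\fq_p$-vector space whose Hilbert series $H_N(T)$ has nonnegative coefficients, the sequence
$$ 0 \longrightarrow N \longrightarrow A\otimes W_2 \longrightarrow A\otimes W_1 \longrightarrow A \longrightarrow \fq_p \longrightarrow 0 $$
is exact, so the alternating sum of Hilbert--Poincar\'e series vanishes. Using that $A\otimes_{\fq_p}W_j$ has Hilbert series $U(T)$ times that of $W_j$, this reads
$$ U(T)\Big(1 - dT + \sum_{i=1}^r T^{a_i}\Big) = 1 + H_N(T).$$
As $H_N(T)\geq 0$ coefficientwise, we obtain $U(T)\big(1-dT+\sum_i T^{a_i}\big)\geq 1$, which in the coefficientwise sense of the preceding definition is the asserted lower bound $U(T)\geq\big(1-dT+\sum_{i=1}^r T^{a_i}\big)^{-1}$.

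Finally, for the equality statement I would argue that $N$ vanishes when $\cd(\G)\leq 2$. In that case the minimal free resolution of $\fq_p$ over $\fq_p[[\G]]$ has length two, $0\to\fq_p[[\G]]^{\,r}\to\fq_p[[\G]]^{\,d}\to\fq_p[[\G]]\to\fq_p\to 0$ with $r=\dim_{\fq_p}H^2(\G,\fq_p)$; passing to associated graded for the augmentation filtration records the relation degrees as the $a_i$ and keeps the sequence a resolution, forcing $\partial_2$ to be injective and hence $N=0$, so that $U(T)\big(1-dT+\sum_i T^{a_i}\big)=1$. I expect the main obstacle to be exactly this last point: a filtered exact sequence need not stay exact after grading, so one must show that taking associated graded of the length-two resolution preserves exactness, equivalently that the leading forms $f_i$ form an inert (strongly free) set. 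This is where the hypothesis $\cd(\G)\leq 2$ is genuinely used, and where I would invoke the structure theory of \S11--\S12 of \cite{DSMN} relating $\mathrm{Tor}^A_\bullet(\fq_p,\fq_p)$ to $H^\bullet(\G,\fq_p)$ via the Jennings--Golod--Shafarevich formalism.
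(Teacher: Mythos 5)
Your strategy --- pass to the associated graded algebra $A=\grad(\fq_p[[\G]])$, present it over the free algebra $\Lambda=\fq_p\langle X_1,\ldots,X_d\rangle$, and extract the inequality from the Euler characteristic of a partial graded free resolution --- breaks down at one specific claim: that the leading forms $f_i$ of the relators generate the graded ideal $J=\ker(\Lambda\twoheadrightarrow A)$. This is false in general. The ideal $J$ consists of the leading forms of \emph{all} elements of $\mathfrak{r}=\ker(\fq_p[[F]]\to\fq_p[[\G]])$, and combinations of the $\rho_i-1$ whose leading terms cancel produce elements of $J$ in higher degrees that need not lie in $(f_1,\ldots,f_r)$ (the familiar phenomenon that an associated graded ideal requires extra generators; compare the commutative model $(x^2+y^3,\,xy)$, whose ideal of leading forms needs the extra generator $y^4$). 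Consequently your complex is in general \emph{not} exact at $A\otimes W_1$: one checks that $\ker\partial_1$ corresponds to all of $J$, whereas $\Im\partial_2$ accounts only for the subideal $(f_1,\ldots,f_r)$; exactness there is \emph{equivalent} to your false claim. The Euler-characteristic identity then reads
$$U(T)\Bigl(1-dT+\sum_{i=1}^r T^{a_i}\Bigr)=1+H_N(T)-H_{\ker\partial_1/\Im\partial_2}(T),$$
and the defect term enters with the \emph{wrong sign}, so the coefficientwise bound does not follow. This is not the peripheral worry you raise at the end about the equality case; it already kills the inequality. Indeed, from your setup one only obtains the Golod--Shafarevich bound for $B=\Lambda/(f_1,\ldots,f_r)$, and $A$ is a quotient of $B$, so $H_A\leq H_B$ --- an inequality in the useless direction.

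The paper's proof avoids this trap by never passing to leading forms: it stays in the filtered (pseudocompact) category and uses Brumer's exact sequence of $\fq_p[[\G]]$-modules
$$0\longrightarrow R/R^p[R,R]\longrightarrow \I(F)/\I(F)\I(R)\longrightarrow \I(\G)\longrightarrow 0,$$
whose middle term is free of rank $d$, giving the exact series identity $P(T)-dT\,U(T)+U(T)-1=0$. The relators enter only through the filtered \emph{surjection} $\fq_p[[\G]]\rho_1\oplus\cdots\oplus\fq_p[[\G]]\rho_r\twoheadrightarrow R/R^p[R,R]$, which yields $P(T)\leq\bigl(\sum_i T^{a_i}\bigr)U(T)$; here the unavoidable loss of information --- the possible non-injectivity of this map, which by Brumer's Proposition 5.3 is measured exactly by $\cd\,\G>2$ --- contributes with the \emph{favorable} sign, and its vanishing for $\cd\,\G\leq 2$ gives the equality case at once. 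To salvage a purely graded argument you would need to control the generator degrees of all of $J$, which is tantamount to the strong-freeness/mildness hypotheses that this proposition deliberately does not assume.
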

  \begin{proof} The proof is essentialy a result of Brumer \cite{Brumer}.
First let us consider the natural short exact sequence
$$0 \longrightarrow \I(\G) \longrightarrow \fq_p[[\G]] \longrightarrow \fq_p \longrightarrow 0,$$
where $\I(\G)$ is the augmentation ideal of the complete algebra $\fq_p[[\G]]$.
The topological generators of $\G$ are in $\I(\G)$ and therefore all of degree $1$.
For a minimal presentation $$ 1 \longrightarrow R \longrightarrow F \longrightarrow \G \longrightarrow 1,$$ of $\G$, Brumer (see (5.2.1) in \cite{Brumer}), shows that there is a short exact sequence
$$0 \longrightarrow R/R^p[R,R] \stackrel{f}{\longrightarrow} \I(F)/\I(F)\I(R) \stackrel{g}{\longrightarrow} \I(\G) \longrightarrow 0,$$
where $f(r)=r-1 \bmod \I(F)\I(R)$.
Now, the quotient $\I(F)/\I(F)\I(R)$ is a free $\fq_p[[\G]]$-module on the generators $x_1-1,\cdots, x_d-1$ and then 
we have the relation on the Hilbert Poincar\'e series:
$$P(T)-d T U(T) + U(T) -1 =0,$$
where $P(T)$ is the series of $R/R^p[R,R]$ and where $U(T)$ is the series of $\fq_p[[\G]]$.
As $\fq_p[[\G]]\cdot \rho _1 \oplus \cdots \oplus \fq_p[[\G]] \cdot \rho_r \stackrel{\varphi}{\twoheadrightarrow} R/R^p[R,R]$, and that the elements $\rho_i$ are of degree $a_i$, one has
$$\displaystyle{P(T) \leq \left(\sum_{i=1}^r T^{a_i}\right) U(T)}.$$

Now, the equality comes from the fact that the pro-$p$-group $\G$ is of cohomological dimension  at most $2$ if and only if the map $\varphi$ is an isomorphism (see Proposition 5.3 in \cite{Brumer}).
\end{proof}


\begin{theo} \label{theo-2}
Let $\L/\K$ be a tamely ramified pro-$p$-extension with Galois group $\G$.  
Suppose that $\G$ is of Golod-Shafarevich type and of cohomological dimension $2$. Then for every $\varepsilon >0$,  there exists a constant $C$ and infinitely many $n$ such that 
$$\displaystyle{\Mo_{D_{2^n}^{\mathrm{ab}} } \leq C \frac{[\G:D_{2^n}]}{(\log_p[\G:D_{2^n}])^{2-\varepsilon}}},$$
where  $D_{2^n}$ runs in  the Zassenhaus filtration $(D_k)$ of $\G$.
\end{theo}

\begin{rema} In the inequality of the previous Theorem, the constant depends on 
 $\varepsilon$ and on the set of primes ramifying in $\L/\K$.
We note that Labute (Theorem 1.6 of \cite{labute}) was the first to give a sufficient condition for mildness of $\G_S^T$; thanks to the work of Schmidt \cite{schmidt}, for any $\K$, by choosing $S$ large enough, one
 can arrange that the group $\G_S^T$ is of cohomological dimension $2$ and 
 mild, hence meets the conditions of the Theorem.  (See also the work of Labute \cite{labute},  Forr\'e \cite{Forre}, G\"artner \cite{Gartner}, Vogel \cite{Vogel}, etc.)
We wish to highlight the fact that the preceding Theorem combines together some results from  analytic number theory (Brauer-Siegel), arithmetic (the results of Schmidt and the fact that the  root discriminant is bounded) and group theory! 
\end{rema}

\begin{proof}

We want to give a lower bound of $d_p D_{2^n}$. First, As $[D_{2^n},D_{2^n}] \subset D_{2^{n+1}}$, we should have in mind the fact that
$d_p D_{2^n} \geq d_p D_{2^n}/D_{2^{n+1}}$.

Now by hypothesis $$\prod_{i\geq 1}\left(\frac{T^{pi}-1}{T^i-1}\right)^{b_i} = \frac{1}{1-dT+rT^2} = \frac{1}{(1-\alpha T)(1-\beta T)},$$
with $\alpha \geq \beta$, $\alpha \geq 2$ and $\beta >1$. Indeed, as $\G^{ab}$ is finite,  $r\geq d$.

By taking logarithms, one obtains:
$$\sum_{i\geq 1} b_i \sum_{k\geq 1} \frac{1}{k}\left( T^{ki}- T^{pki}\right) = \sum_{i \geq 1} \frac{1}{i} (\alpha^i + \beta^i) T^i.$$
Take  $m$ with  $(m,p)=1$. Then by looking the coefficients at $T^m$:
$$\alpha^m +\beta^m= \sum_{i | m} i b_i .$$
This equality at  $m=2^n$ and at $m=2^{n-1}$ allows us to give:
$$b_{2^n}=2^{-n}\left(\alpha^{2^n}-\sqrt{\alpha^{2^n}}+\beta^{2^n}-\sqrt{\beta^{2^n}}\right)$$ 
and then there is a  constant $C>1$ such that for all large enough $n$, we have:  $$b_{2^n} \geq C \frac{\alpha^{2^n}}{2^n}.$$

\medskip

Let us conserve the notation of \cite{DSMN} and  put $i_n=\log_p [\G:D_{2^n}]$.
As $d_p D_{2^n} \geq d_p D_{2^n}/D_{2^{n+1}} = \log_p |D_{2^n}/D_{2^{n+1}}|$ 
one has the inequality $$i_{n+1} \leq  d_n + i_n ,$$
where $d_n = d_p D_{2^n}$. Now, for $n\gg 0$,   $$i_{n+1} = \log_p[\G:D_{2^{n+1}}] = \log_p[\G:D_{2^n}] + \log_p[D_{2^n}:D_{2^n+1}] + \log_p[D_{2^n+1}:D_{2^{n+1}}] \geq  b_{2^n} \geq C \frac{\alpha^{2^n}}{2^n}.$$ 
Let $n_0$ be an integer. Suppose that for all  $n\geq n_0$, \ $d_n \leq i_n^{2-\varepsilon}$.  Then, $i_{n+1} \leq 2 i_n^{2-\varepsilon}$ and by induction $$i_{n+1} \leq 2^{1+(2-\varepsilon)+\cdots + (2-\varepsilon)^{n-n_0}} i_{n_0}^{(2-\varepsilon)^{n+1-n_0}}.$$ Hence for $n\gg n_0$, 
$$C \frac{\alpha^{2^n}}{2^n} \leq i_{n+1} \leq 2^{\frac{(2-\varepsilon)^{n+1-n_0}-1}{1-\varepsilon}} i_{n_0}^{(2-\varepsilon)^{n+1-n_0}}$$ which is a contradiction for large $n$.

Hence, there exist infinitely many $n$ such that $d_p D_{2^n} \geq (\log_p[\G:D_{2^n}])^{2-\varepsilon}$ and if $\G$ is the Galois group of tamely ramified tower,
$\displaystyle{\Mo_{D_{2^n}^{\mathrm{ab}}} \ll \frac{[\G:D_{2^n}]}{(\log_p[\G:D_{2^n}])^{2-\varepsilon}}}$.
\end{proof}

\begin{rema}
Calculations of the above type with Poincar\'e series can be found, for example, in \cite{mcleman}
and \cite{minac}.
\end{rema}

 \section{Final Remarks}

 \subsection{On a question of structure}
 We have been looking for towers in which the $p$-rank of 
class groups has 
 linear growth. In the Iwasawa context, abelian as well as non-abelian
 (for the latter see for example \cite{Perbet}), there is an
 underlying algebraic structure thanks to which linear growth of the rank
 corresponds exactly to having positive $\mu$-invariant.  Can we
 detect any evidence of a similar algebraic structure in the tame
 case?

 \
 
 In this paper we produce our examples as follows. First, we consider
 an infinite extension $\k_S^T/\k$ with $T$ non-trivial, and then take
its compositum with a finite $p$-extension $\K/\k$ inside $\k_T$.  In 
this manner, one 
obtains a subextension $\L:=\K\k_S^T$ of
 $\k_{\{S\cup T\}}^\emptyset$.  It is in the extension $\L/\K$ that we can force
linear growth of the $p$-class groups $(\A_n)_n$.  Put
 $\G=\Gal(\k_S^T/\k)\simeq \Gal(\L/\K)$. By a result of Schmidt 
\cite{schmidt}, by choosing $S$ large enough, one
 can assume that the group $\G$ is of cohomological dimension $2$ and
 mild.  Let $\Lambda:=\fq_p[[\G]]$ be the Iwasawa algebra associated
 to $\G$.  As $\G$ is mild, the ring $\Lambda$ is without zero
 divisor, but note that it's probably not Noetherian.  
Let $\displaystyle{\X:=\lim_{\leftarrow_n} \A_{n} }$
 be the projective limit of the studied arithmetic object $\A_n$. The
 limit $\X$ is a finitely generated $\Lambda$-module
 (\cite{Maire-MZ}).
 \begin{Question} Is the linear growth of $A_n$ produced by this method 
related to  a natural algebraic structure of ``Iwasawa module''
$\X$ ?
 \end{Question}

\subsection{How small can the mean exponent be in tame towers?}
 
 We have shown that there exist asymptotically good infinite towers in which the mean
 exponent is bounded above.  On the one hand, it is natural to wonder:
 \begin{Question}\label{q2}
Can we find asymptotically good pro-$p$ towers  $\LL$ 
for which $\Minf(\LL)$ is arbitrarily close to~$1$? 
\end{Question}\label{q1}
 On the other hand, our constructions are rather special, so we ask:
\begin{Question}
Are there asymptotically good infinite pro-$p$ towers in which the mean exponent of $p$-class groups is not bounded?
\end{Question}
 
 As a start on Question \ref{q2}, we note that in section \ref{exemples}, we have developed some examples of the
 following type: $$\K=\Q(\sqrt{p_1\cdots p_t},\sqrt{- p_{t+1}\cdots
   p_{t+s}}).$$ Here $\k^T/\k$ is infinite where
 $\k=\Q(\sqrt{p_1\cdots p_t})$ and $T=\{ p_{t+1},\cdots ,p_{t+s}\}$.
 These examples give $s$-linear growth for $p$-class groups
 where the base field $\K$ has genus $g \approx
 \log(p_1 \cdots p_t p_{t+1} \cdots p_{t+s})$.  Letting $n=t+s$, we
note that as $n$ becomes
 large, one has $g \lesssim p_n$,
 where $p_n$ is, in the optimal case,
 the $n$th prime number {\it i.e.} $g \sim n \log(n)$. But
 on the other side, to force the infinitude of $\k^T/\k$, which we need,
 we must apply corollary \ref{critere}, which requires $s \sim n$. Thus,
the best we can expect via this method for bounding $\Minf(\K_\emptyset^\emptyset/\K)$ is only
$\Minf(\K_\emptyset^\emptyset/\K) \lesssim \log(n)$.
 
\begin{Question}
 What is the  biquadratic field  (following the above method) with the smallest 
 upper bound on the value of $\Minf(\K_\emptyset^\emptyset/\K)$?
\end{Question}

\subsection{Concluding Summary} In this paper, we have introduced the logarithmic mean exponent of a finite abelian $p$-group as an invariant 
that balances the cardinality  of the group as weighed against its rank, and studied 
its behavior in the context of $p$-class groups of number fields varying in towers with restricted 
ramification.  By a mixture of results from algebraic and analytic number theory, we have constructed tame towers for which the mean exponent is bounded, and shown that, by contrast, the mean exponent for some open subgroups of $p$-adic analytic groups tend to infinity.  We hope that further study of the mean exponent will shed light on properties that distinguish Galois groups of tame versus wild extensions.


\end{document}